\newcommand{\BB}{{\cal B}}
\newcommand{\EE}{{\cal E}}
\newcommand{\FF}{{\cal F}}
\newcommand{\HH}{{\cal H}}
\newcommand{\MM}{{\cal M}}
\newcommand{\PP}{{\cal P}}
\newcommand{\VV}{{\cal V}}
\newcommand{\WW}{{\cal W}}
\newcommand{\BN}{{\mathbb N}}
\newcommand{\BR}{{\mathbb R}}
\newcommand{\BX}{{\mathbb X}}
\newcommand{\dyw}{\mbox{\rm div}}
\newcommand{\fch}{{\mathbf{1}}}
\newtheorem{theorem}{\bf Theorem}[section]
\newtheorem{proposition}[theorem]{\bf Proposition}
\newtheorem{lemma}[theorem]{\bf Lemma}%[subsection]
\newtheorem{corollary}[theorem]{\bf Corollary}
\theoremstyle{definition}
\newtheorem*{definition}{Definition}
\newtheorem{example}[theorem]{\bf Example}
\newtheorem{remark}[theorem]{Remark}
\numberwithin{equation}{section}
\begin{document}

\title {Smooth measures and capacities associated with nonlocal parabolic operators}
%On the decomposition of soft measures in the parabolic case
\author {Tomasz Klimsiak and Andrzej Rozkosz\\
{\small Faculty of Mathematics and Computer Science,
Nicolaus Copernicus University} \\
{\small  Chopina 12/18, 87--100 Toru\'n, Poland}\\
{\small E-mail addresses: tomas@mat.umk.pl (T. Klimsiak), rozkosz@mat.umk.pl (A. Rozkosz)}}
\date{}
\maketitle
\begin{abstract}
%We consider the time dependent Dirichlet form $\EE$ associated with a family of regular
%(non-symmetric) Dirichlet forms having common domain and  satisfying some mild regularity  %assumptions. We provide a decomposition  of smooth measures  with
%respect to the capacity associated with $\EE$. We apply this decomposition to
%the study of the structure of additive functionals in the Revuz correspondence with diffuse %measures.

We consider a family  $\{L_t,\, t\in [0,T]\}$ of closed operators generated by a family of regular (non-symmetric) Dirichlet forms $\{(B^{(t)},V),t\in[0,T]\}$ on $L^2(E;m)$.
We show that a bounded (signed) measure $\mu$ on $(0,T)\times E$ is smooth, i.e. charges no set of zero parabolic capacity associated with $\frac{\partial}{\partial t}+L_t$, if and only if $\mu$ is of the form $\mu=f\cdot m_1+g_1+\partial_tg_2$
with $f\in L^1((0,T)\times E;dt\otimes m)$, $g_1\in L^2(0,T;V')$, $g_2\in L^2(0,T;V)$.
We apply this decomposition to the study of the structure of additive functionals in the Revuz correspondence with smooth measures. As  a by-product,
we also give some existence and uniqueness results for solutions of semilinear  equations involving the operator $\frac{\partial}{\partial t}+L_t$ and a functional from the dual $\WW'$ of the space $\WW=\{u\in L^2(0,T;V):\partial_t u\in L^2(0,T;V')\}$ on the right-hand side of the equation.
\end{abstract}

%\footnotetext{T. Klimsiak and Andrzej Rozkosz:
%Faculty of Mathematics and Computer Science, Nicolaus Copernicus
%University, Chopina 12/18, 87-100 Toru\'n, Poland; e-mail addresses:
%tomas@mat.umk.pl. (T. Klimsiak), rozkosz@mat.umk.pl. (A. Rozkosz).}

\footnotetext{{\em Mathematics Subject Classification:}
Primary 31C25; Secondary 35K58, 31C15, 60J45.}

\footnotetext{{\em Keywords:} Dirichlet form, parabolic capacity, smooth measure, Hunt process, additive functional.
}

%\footnotetext{This work was supported by Polish National Science Centre
%(grant no. 2016/23/B/ST1/01543).}

\section{Introduction}

In the study of parabolic problems of the form
\begin{equation}
\label{eq1.3b} -\partial_tu-\Delta_pu=f(\cdot,u)+\mu\quad\mbox{in }D,\quad u|_{(0,T)\times\partial D}=0,\quad u(T,\cdot)=\varphi,
\end{equation}
where $D$ is a bounded open set in $\BR^d$,  $\Delta_p$ is the usual $p$-Laplacian, $p>1$, and $\mu$  is a bounded measure on $D_{0,T}:=(0,T)\times D$ charging no set of zero parabolic $p$-capacity associated with $\frac{\partial}{\partial t}-\Delta_p$ (see below) an important role is played by the result on the decomposition  of $\mu$ proved by Droniou, Porretta and Prignet \cite{DPP} (see,
e.g.,  \cite{DPP,Pe,PPP2}; note that in these papers more general than $\Delta_p$ operators of the form $A(u)=\dyw\, a(t,x,\nabla u)$ are considered).
The decomposition proved in \cite{DPP} says that each such measure $\mu$ (we call it diffuse)
is of the form
\begin{equation}
\label{eq1.4c}
\mu=f+\mbox{div}(G)+\partial_tg
\end{equation}
for some  $f\in L^1(D_{0,T})$, $G\in(L^{p'}(D_{0,T}))^d$ and $g\in L^p(0,T;W^{1,p}_0(D)\cap L^2(D))$. Recently, in \cite{KR}, the  converse to this result was proved.
%It says that if $p>(2d+1)/(d+1)$, then  each bounded Borel measure $\mu$ on $D_{0,T}$ %admitting decomposition (\ref{eq1.4c}) is diffuse.
The decomposition (\ref{eq1.4c}) is a counterpart to the decomposition of diffuse measures proved in the stationary case  by Boccardo, Gallou\"et and Orsina \cite{BGO}.
The decomposition of \cite{BGO} was extended to the Dirichlet forms setting in \cite{KR:BPAN}.

There has recently been increasing interest in semilinear
evolution problems of the form
\begin{equation}
\label{eq1.2a}
-\frac{\partial u}{\partial t}-L_tu=f(\cdot,u)+\mu,\qquad u(T,\cdot)=\varphi,
\end{equation}
involving  operators $L_t$  associated with
a (possibly nonlocal)  Dirichlet form and  bounded measure that do not charge the sets of zero parabolic capacity associated with $\partial _t+L_t$ (see \cite{K:JEE,K:JFA,KR:NoD} and the references therein).  Motivated by possible applications to problems of the form (\ref{eq1.2a}), in the present paper we investigate the structure of such measures.  We extend the results of \cite{KR:BPAN} to the parabolic setting
and at the same time the results of \cite{DPP} with $p=2$ to more general parabolic operators.
As a by-product, we obtain some results on the existence of solutions to equations  of the form (\ref{eq1.2a}) with $\mu\in\WW'_0$ and on the structure of additive functionals associated in the Revuz sense with bounded smooth measures.

Let  $E$ be a locally compact separable metric space,  $E_{0,T}:=(0,T)\times E$ for some $T>0$, and let $m$ be a
Radon measure on $E$ such that $\mbox{supp}[m]=E$. In the paper we consider
%the  generalized   Dirichlet form $\EE$ associated with
smooth measures with respect to  parabolic capacities associated with a family $\{L_t,\, t\in [0,T]\}$ of closed operators generated by a family $\{(B^{(t)},V),t\in[0,T]\}$ of regular
(non-symmetric) Dirichlet forms on $L^2(E;m)$, with common domain
$V$, satisfying some mild regularity  assumptions. Our general Dirichlet forms setting allows us to treat both local and nonlocal operators. 
The results of the paper are new even for local operators. However, in our opinion, the most interesting fact is that we are able to describe the structure of smooth measures (and related additive functionals) for capacities associated with quite large class of parabolic nonlocal operators.

The model example of the family   of local operators
satisfying our assumptions is the family of divergence form operators
\begin{equation}
\label{eq1.2}
L_t=\frac12\sum^d_{i,j=1}\frac{\partial}{\partial x_i}
\Big(a_{ij}(t,x)\frac{\partial}{\partial x_j}\Big),\quad t\in[0,T],\quad x\in D,
\end{equation}
on $L^2(D;dx)$ with zero Dirichlet boundary conditions. In (\ref{eq1.2}), $D$ is a bounded open subset of $\BR^d$ and $\{a_{ij}(t,x)\}_{i,j=1,\dots,d}$ is a symmetric  uniformly elliptic matrix with bounded measurable elements.
In this case the family
$\{(B^{(t)},V),t\in[0,T]\}$ is of the form
\begin{equation}
\label{eq1.1}
B^{(t)}(u,v)=\sum^d_{i,j=1}\int_Da_{ij}(t,x)\frac{\partial u}{\partial x_i}(x)\frac{\partial v}{\partial x_j}(x)\,dx,\quad u, v\in V:=H^1_0(D).
\end{equation}
A model example of the family of  nonlocal operators satisfying our assumptions is  the family consisting of single  fractional Laplace operator
\begin{equation}
\label{eq1.3}
L_t=-(-\Delta)^{\alpha/2}
\end{equation}
on $L^2(D;dx)$ with zero exterior condition. Here $\alpha\in(0,2)$ and $D$ is an open subset of $\BR^d$. In this case $(B^{(t)},V)=(B,V)$, $t\in[0,T]$, where
\[
B(u,v)=\int_{\BR^d}\hat u(x)\bar{\hat v}(x)|x|^\alpha\,dx,\quad u, v\in V:=\{w\in L^2(D;dx):\int_{\BR^d}|\hat u|^2|x|^\alpha\,dx<\infty \}
\]
and $\hat u$ (resp.  $\hat v$) is the Fourier transform of $u$ (resp. $v$) (see \cite[Example 1.4.1, Theorem 4.4.3]{FOT}).

In case of problem (\ref{eq1.3b}), the natural capacity is  the $p$-parabolic capacity $\mbox{cap}_p$ defined  for open set $U\subset D_{0,T}$ by
\[
\mbox{cap}_p(U)=\inf\{\|\partial_tu\|_{L^{p'}(0,T;V'_p)}
+\|u\|_{L^p(0,T;V_p)}:u\ge \mathbf{1}_{U}\,\,
dt\otimes dx\mbox{-a.e.}\},
\]
where $V_p=W^{1,p}_0\cap L^2(D)$ and $V'_p$ is the dual of $V_p$ (see \cite{DPP,Pierre3}).
To study evolution problems with the operator $\partial _t+L_t$, Pierre \cite{Pierre3} introduced the capacity  $\mbox{c}_2$ defined for open $U\subset E_{0,T}$ by
\[
c_2(U)=\inf\{\|\partial_tu\|_{L^{2}(0,T;V')} +\Big(\int_0^TB^{(t)}(u(t),u(t))\,dt\Big)^{1/2}:u\ge
\mathbf{1}_{U}\,\,m_1\mbox{-a.e.}\},
\]
where $m_1=dt\otimes m$. In the potential theory, Borel measures on $E_{0,T}$ which do not charge sets of zero capacity $\mbox{c}_2$ and satisfy some quasi-finitness condition are called  smooth measures. In particular, each  bounded Borel measure ``absolutely continuous" with respect to c$_2$ is smooth. Of course, in  case of operators of the form (\ref{eq1.2})
the classes of diffuse measures and bounded smooth measures coincide.

In our main theorem  we extend (\ref{eq1.4c})  to the Dirichlet form setting.  Let  $\MM_{0,b}(E_{0,T})$ denote the set of all bounded smooth measures on $E_{0,T}$.
We show that each $\mu\in\MM_{0,b}(E_{0,T})$ admits decomposition of the form
\begin{equation}
\label{eq1.4} \mu=f\cdot m_1+g_1+\partial_tg_2
\end{equation}
with $f\in L^1(E_{0,T};m_1)$, $g_1\in\VV'=L^2(0,T;V')$, $g_2\in\VV=L^2(0,T;V)$, i.e. for every bounded
quasi-continuous $\eta\in\WW_0=\{u\in\VV:\partial_tu\in\VV',u(0)=0\}$, we have
\[
\int_{E_{0,T}}\eta\,d\mu=\int_{E_{0,T}}f\,dt\,dm+\langle
g_1,\eta\rangle-\langle\partial_t\eta,g_2\rangle,
\]
where $\langle\cdot,\cdot\rangle$ denotes the duality between
$\VV'$ and $\VV$. We also show the converse of this theorem. Namely,  each $\mu\in\MM_b(E_{0,T})$  having decomposition (\ref{eq1.4})  is smooth.
Note that the converse is an extension of the result proved by
Fukushima \cite{Fukushima}  to time dependent Dirichlet forms. The proof of the fact that  $\mu\in\MM_{0,b}(E_{0,T})$ can be written in the form (\ref{eq1.4}) is purely analytic. Essential to the proof of the converse part are  probabilistic methods.

In applications to (\ref{eq1.3b}), the analysis of additional  properties
of the term $g$ appearing in the decomposition (\ref{eq1.4c}) proved to be important.
For instance, crucial to the definition  and the existence result of a solution $u$ of  (\ref{eq1.3b}) is the fact that $g$ regularizes $u$ with respect to time in the sense that $u-g\in W\subset C([0,T],L^p(D))$, where $W=\{u\in L^p(0,T;V_p):\partial_t u\in L^{p'}(0,T;V'_p)$.
This property together with some other useful properties of $g$ has been proved in \cite{DPP}. In the present paper,  applying the potential theory tools, we show that $g_2$ from (\ref{eq1.4}) enjoys similar properties. We also show some new results on the regularity of $g_2$.  We show the following useful properties.
\begin{enumerate}[{-}]
\item $g_2$ has an $m_1$-version $\tilde g_2$ which is quasi-c\`adl\`ag (i.e quasi-right-continuous with left limits; this notion generalizes the notion of qusi-continuity; see Section \ref{sec8}) and $g_2$ is a difference of c$_2$-quasi-l.s.c. functions.
\item  $g_2$ has an $m_1$-version $\tilde g_2$ which
is c$_2$-quasi-bounded, i.e.  there exists an increasing sequence $\{F_n\}$  of closed subsets of $E_{0,T}$  such that
$c_2(E_{0,T}\setminus F_n)\rightarrow 0$ as $n\rightarrow\infty$ and
$\|\tilde g_2\mathbf{1}_{F_n}\|_\infty<\infty$, $n\ge1$.

\item The function $[0,T]\ni t\mapsto g_2(t)\in L^2(E;m)$ is c\`adl\`ag (right-continuous with left limits) and $g_2(T-):=\lim_{t\rightarrow T^-}g_2(t)=0$.

\item The measure $\mu_t$ defined by $\mu_t(B)=\mu(\{t\}\times B)$  for Borel sets $B\subset E$ is absolutely continuous with respect to $m$ and $\mu_t=(g_2(t)-g_2(t-))\cdot m$.

\item  If $\mu$ admits decomposition $\mu=f'\cdot m_1+g'_1+\partial_tg'_2$ with some
$f'\in L^1(E_{0,T};m_1)$, $g'_1\in\VV'$, $g'_2\in\VV$,
then the function
$[0,T]\ni t\mapsto (g_2-g_2')(t)\in L^2(E;m)$
belongs to $C([0,T],L^1(E;\rho\cdot m))$ for every positive  Borel function $\rho$ on $E$   such that $\int_E\rho\,dm<\infty$.
\end{enumerate}

It is worth pointing out here that the proofs of the above results in the general setting requires us to use quite  different methods then those  used in \cite{DPP} for the Leray-Lions type operators,  which are strongly based on the regularization of the measure $\mu$ by a convolution operator. %which  is contractive in this special case with respect to semigroup generated by

%In the stationary case  it is also known  that  if $\mu$ is a bounded Borel measure on $E$ %(resp. $D$) admitting decomposition (\ref{eq1.4}) (resp. (\ref{eq1.4c})), of course then with %time independent $f,g_1$ and $g_2\equiv 0$
%(resp. $f,G$ and $g\equiv 0$),    then it is a smooth measure (resp. diffuse measure; see %\cite{BGO,KR:BPAN}). In the evolution  setting only a partial result in this direction was %known. Let $\MM_b(E_{0,T})$ (resp. $\MM_b(D_{0,T})$) denote the set of all bounded (signed) %Borel measures on $E_{0,T}$ (resp. $D_{0,T}$). In Petitta, Ponce and Porretta \cite{PPP2} %(see also \cite{PPP1}) it was shown that  if $\mu\in\MM_b(D_{0,T})$ admits  decomposition %(\ref{eq1.4c}) it is diffuse if   one assume additionally that $g\in L^{\infty}(D_{0,T})$.
%The problem whether one can dispense with the additional assumption that $g\in %L^{\infty}(D_{0,T})$ was left  open.

In the proof of our main decomposition theorem, we apply some deep results from the potential theory for evolution operators proved  by Pierre \cite{Pierre1,Pierre2,Pierre3}, as well as from the probabilistic potential theory for time dependent or generalized Dirichlet forms developed in the papers by Oshima \cite{O1,O2,O3,O4}, Stannat \cite{St1,St2} and Trutnau \cite{T1,T2}. In these papers the definitions of the capacity (and hence some quasi-notions) are different. In Section \ref{sec3}, which is technical but important for us, we show that all these capacities are in fact equivalent on $E_{0,T}$. This allows us to
apply freely the results from the papers mentioned above.

%M. Pierre considered
%(one of the equivalent capacities, see \cite{Pierre3}) capacity c$_2$ defined by %(\ref{eq5.e}). In the papers of Stannat and Trutnau  it is considered
%the framework of generalized Dirichlet forms and naturally associated capacity Cap$_\psi$,
%\[
%\mbox{Cap}_{\psi}(U)=((G_1\psi)_U,\psi)_{\HH},
%\]
%for relatively compact open sets $U\subset E_{0,T}$, where $\{G_\alpha,\,\alpha>0\}$ is the %resolvent of $\LL$, $(G_1\psi)_U$ is the reduced function and $\psi$
%arbitrary strictly positive Borel function on  $E_{0,T}$ such that %$\int_{E_{0,T}}\psi\,dm_1<\infty$, $\psi\le 1$ (see Section 2). Y. Oshima considered the %framework of time-dependent Dirichlet forms and naturally associated capacity
%Cap$^1$ on $\BR\times E$,
%\[
%Cap^1(U)=\hat\mu_U(\overline U),
%\]
%for relatively compact open sets $U\subset \BR\times E$, where $\hat\mu_U$ is the so called %co-equilibrium measure for $U$. In Section 2 we spent a time to show that all this capacities %are equivalent on $E_{0,T}$, which allows us to
%apply freely all the mention result and approaches  of potential theory.

One of the most important ingredient of the proof that   each $\mu\in\MM_b(E_{0,T})$  having decomposition (\ref{eq1.4})  is smooth  is an existence result
for the Cauchy problem (\ref{eq1.2a}) with $\mu\in\WW'_0$ and $f$ not depending on $u$. If $\mu\in L^2(0,T;V')$, then the existence of a solution to (\ref{eq1.2a}) follows
from the classical theory of variational inequalities (see \cite{Lions}). However, if $\mu\in\WW'_0$, then the situation is more difficult. To prove the existence of a solution, a decomposition similar to (\ref{eq1.4}),  but for functionals from $\WW'_0$ is needed.  In case of  (\ref{eq1.3b}),  such a decomposition and an existence result were proved in \cite{DPP}.
In Section \ref{sec5}, we prove a similar decomposition result for $\mu\in\WW'_0$, and then, in Section  \ref{sec6}, we deal with the existence of a solution of (\ref{eq1.2a}) with $\mu\in\WW'_0$. Though for our applications
we only need  the existence result for liner equations, in the paper we show that there exist a solution for semiliner problem (\ref{eq1.2a}) under the assumption that
$\varphi\in L^2(E;m)$ and $u\mapsto f(\cdot,u)$ is continuous, nonincreasing and  satisfies the linear growth condition. We think that this result may be of independent interest.
Finally, note here that in the proof that $\mu$
given by (\ref{eq1.4}) is smooth we use  a very recent result from the paper by Beznea and  C\^impean \cite{BC} on the characterization of quasimartingale functions.

It is well known that there is a one to one correspondence, called Revuz correspondence, between positive  smooth, with respect to the Dirichlet form $(B,V)$,  measures on $E$  and positive  continuous additive functionals of the Hunt processes associated with $(B,V)$ (see \cite{FOT,MR}). In the parabolic case the situation is more subtle. Let $\BX$ denote a Hunt process associated with a generalized Dirichlet form $\EE$ in the resolvent sense (see Section \ref{sec2}) which is generated by the operator $\frac{\partial}{\partial t}+L_t$.
In the paper we first show that with each smooth measure $\mu$ on $E_{0,T}$  with respect the form $\EE$ one can associate uniquely a positive  additive functional $A^\mu$ of $\BX$ which is natural, i.e. has no common discontinuities with the Hunt process associated with $\EE$. This is a counterpart to the known result concerning smooth measures on $\BR\times E$ (see \cite{O2,O4}). Then we analyse more carefully the nature of jumps of $A^\mu$ in the case where $\mu\in\MM_{0,b}(E_{0,T})$.  Roughly speaking, our main result says that for $\mu\in\MM_{0,b}(E_{0,T})$ the jumps of $A^\mu$ are related to $g_2$ from decomposition (\ref{eq1.4}). We show that $g_2$ has always a quasi-c\`adl\`ag modification $\tilde g_2$, i.e. an $m_1$-version   $\tilde g_2$ such that the process $t\mapsto \tilde g_2(X_t)$ is right-continuous with left limits, and for every predictable stopping time $\tau$,
\[
\Delta A^\mu_\tau:=A^\mu_{t}-A^\mu_{t-}=\Delta\tilde g_2(X)_\tau.
\]
In other words, $A^\mu$ has jumps that coincide with the jumps of the process $\tilde g_2(X)$ in predictable stopping times.
This implies  that $\tilde g_2$ is quasi-continuous if and only if $A^\mu$ is continuous.

\section{Preliminaries}
\label{sec2}

In this paper, $E$ is a locally compact separable metric space and
$m$ is an everywhere dense Radon measure on $E$, i.e. $m$ is a
positive Borel measure on $E$, which is  finite on compact sets
and strictly positive on nonempty open sets.

We set $E^1=\BR\times E$, $m_1=dt\otimes m$, and for $T>0$, we set
$E_{0,T}=(0,T)\times E$. We denote by $\BB(E^1)$ (resp. $\BB(E_{0,T})$) the set of all Borel measurable subsets of $E^1$ (resp. $E_{0,T}$). With the customary abuse of notation, the same symbols are used to denote the sets of real Borel measurable functions on $E^1$ (resp. $E_{0,T}$). $\BB_b(E^1)$ is  the set of all
real bounded Borel measurable functions on $E^1$ and $\BB_b^+(E^1)$ is the subset of $\BB_b(E^1)$ consisting of positive  functions. The sets $\BB_b(E_{0,T})$,
$\BB^+_b(E_{0,T})$ are defined analogously.

\subsection{Time dependent Dirichlet forms}
\label{sec2.1}

Let $H=L^2(E;m)$ and  $(\cdot,\cdot)_H$ denote the usual inner
product in $H$. In this paper, we assume that  we are given a family
$\{(B^{(t)},V),t\in[0,T]\}$ of regular (non-symmetric) Dirichlet
forms on $H$ with common domain $V\subset H$ (see \cite[Chapter
I]{MR} for the definitions) satisfying the following conditions.
\begin{enumerate}
\item[(a)]There is $K\ge0$ such that $ |B^{(t)}_1(\varphi,\psi)|\le K
(B^{(t)}_1(\varphi,\varphi))^{1/2}(B^{(t)}_1(\psi,\psi))^{1/2}$
for all $\varphi,\psi\in V,t\in[0,T]$, where as usual, we set
$B^{(t)}_{\lambda}(\varphi,\psi)=(B^{(t)}(\varphi,\psi)
+\lambda(\varphi,\psi)_H$ for $\lambda\ge0$.

\item[(b)]$[0,T]\ni t\mapsto B^{(t))}(\varphi,\psi)$ is measurable
for all $\varphi,\psi\in V$.

\item[(c)]There is $c\ge1$ such that
$c^{-1}B^{(0)}(\varphi,\varphi)\le B^{(t)}(\varphi,\varphi)\le c
B^{(0)}(\varphi,\varphi)$ for all $t\in[0,T]$ and $\varphi\in
V$.
\end{enumerate}
To shorten notation, we continue to write $B$ for $B^{(0)}$. By
putting $B^{(t)}=B$ for $t\notin[0,T]$, we may and will assume that
$B^{(t)}$ is defined and satisfies (c) for all $t\in\BR$. We denote by
$\tilde B^{(t)}$ the symmetric part of $B^{(t)}$,
i.e. $\tilde B^{(t)}(\varphi,\psi)=\frac12(B^{(t)}(\varphi,\psi)
+B^{(t)}(\psi,\varphi))$.

Since $V$ is a dense subspace of $H$ and $(B,V)$  is closed, $V$
is a real Hilbert space with respect to $\tilde B_1(\cdot,\cdot)$,
which is densely and continuously embedded in $H$. We equip $V$ with the norm
$\|\cdot\|_V$  defined by  $\|\varphi\|^2_V=B_1(\varphi,\varphi)$, $\varphi\in V$.
We denote by  $V'$ the dual space of $V$, and by $\|\cdot\|_{V'}$  the
corresponding norm. For $T>0$, we set
\[
\HH=L^2(0,T;H),\qquad \VV=L^2(0,T;V),\qquad \VV'=L^2(0,T;V')
\]
and
\[
\|u\|^2_{\VV}=\int^T_0\|u(t)\|^2_V\,dt,\qquad
\|u\|^2_{\VV'}=\int^T_0\|u(t)\|^2_{V'}\,dt.
\]
We shall identify  $H$ and its dual $H'$. Then $V\subset H\simeq
H'\subset V'$ continuously and densely, and hence
$\VV\subset\HH\simeq\HH'\subset\VV'$ continuously and densely.
%(osrodkowosc?)
%, and by (B2)(?),
%\[
%\|u\|_{\VV'}\|u\|_{\HH}\le \|u\|_{\VV} \quad ? (potrzebne?)
%\]

For given $u\in\VV$ let $\partial_tu$ denote the derivative in the
distribution  sense of the function $t\mapsto u(t)\in V$, and let
\[
\WW=\{u\in \VV:\partial_tu\in \VV'\}, \qquad
\|u\|_{\WW}=\|u\|_{\VV}+\|\partial_tu\|_{\VV'}.
\]
It is well known that there is a continuous embedding of $\WW$
into $C([0,T];H)$, i.e. for every $u\in\WW$ one can find $\bar
u\in C([0,T];H)$ such that $u(t)=\bar u(t)$ for a.e. $t\in[0,T]$
(with respect to the Lebesgue measure) and
\begin{equation}
\label{eq2.5} \|u\|_{C([0,T];H)}\le C\|u\|_{\WW}
\end{equation}
for some $C>0$. In what follows, we adopt the convention that any
element of $\WW$ is already in $C([0,T];H)$. With this convention
we may define the spaces
\[
\WW_0=\{u\in\WW:u(0)=0\},\qquad \WW_T=\{u\in\WW:u(T)=0\}.
\]
Note that $\WW_0,\WW_T$ are reflexive spaces as closed linear
subspaces of the reflexive space $\WW$.

The linear operator $\partial_t$ on $\HH$ with domain $\WW_T$ will be denoted by $\Lambda$. Its adjoint, i.e. the operator $-\partial_t$ with domain $\WW_0$, will be denoted by $\hat\Lambda$.

We denote by $\EE$  the generalized  Dirichlet  form
associated with $\Lambda$ and the family $\{(B^{(t)},V),t\in[0,T]\}$, that is
\begin{equation}
\label{eq2.23} \mathcal{E}(u,v)=\left\{
\begin{array}{l}\langle-\partial_tu,v\rangle+\BB(u,v),
\quad u\in\WW_T,v\in\VV,\smallskip \\
\langle\partial_tv, u\rangle+\BB(u,v),\quad u\in\VV,v\in\WW_0,
\end{array}
\right.
\end{equation}
where $\langle\cdot,\cdot\rangle$ is the duality pairing between
$\VV'$ and $\VV$, and
\begin{equation}
\label{eq2.24} \BB(u,v)=\int^T_0B^{(t)}(u(t),v(t))\,dt.
\end{equation}
The generalized form associated with $\hat\Lambda$ and the family $\{(\hat B^{(t)},V),t\in[0,T]\}$, where $\hat B^{(t)}(u,v)=B^{(t)}(v,u)$, $u,v\in V$, $t\in[0,T]$, will be denoted by $\hat\EE$.

In the paper, we denote by $(G_{\alpha})_{\alpha>0}$ (resp. $\hat G_{\alpha})_{\alpha>0}$
the resolvent (resp. coresolvent) associated with the
form $\EE$, i.e. $(G_{\alpha})_{\alpha>0}$, $(\hat
G_{\alpha})_{\alpha>0}$ are strongly continuous resolvents of
contractions on $\HH$ such that $G_{\alpha}(\HH)\subset\WW_T$,
$\hat G_{\alpha}(\HH)\subset\WW_0$ and
\begin{equation}
\label{eq2.26}
\EE_{\alpha}(G_{\alpha}f,g)=(f,g)_{\HH}=\EE_{\alpha}(g,\hat
G_{\alpha}f),\quad f\in\HH,g\in\VV
\end{equation}
(for a construction of the resolvents see, e.g., \cite[Chapter I]{St2}).

%\begin{lemma}
%\label{lem2.2} If $v,u\in\VV$, then
%\begin{equation}
%\label{eq3.5}  \alpha G_{\alpha}u\rightarrow u,\quad \alpha\hat G_{\alpha}v\rightarrow %v\quad\mbox{in }
%\VV.
%\end{equation}
%If moreover $u\in \WW_T$, $v\in\WW_0$, then
%\begin{equation}
%\label{eq2.3} \alpha G_{\alpha}u\rightarrow u\mbox{ weakly in
%} \WW_T,\quad  \alpha\hat G_{\alpha}v\rightarrow v\mbox{ weakly in
%} \WW_0.
%\end{equation}
%\end{lemma}
%\begin{proof}
%For (i) see, e.g., \cite[Proposition I.III.7]{St2}. Assume that
%$v\in\WW_0$. By (\ref{eq3.5}), for $w\in\VV$ we have
%\[
%\langle\hat\Lambda(\alpha\hat
%G_{\alpha}v),w\rangle=\BB(w,\alpha\hat G_{\alpha}v)
%-\EE(w,\alpha\hat G_{\alpha}v)\rightarrow\BB(w,v) -\EE(w,v)
%=\langle\hat\Lambda v,w\rangle.
%\]
%Since $\VV$ is reflexive, $\{\hat\Lambda(\alpha\hat
%G_{\alpha}v)\}$ is weakly convergent in $\VV'$ as
%$\alpha\rightarrow\infty$. From this, (\ref{eq3.5}) and Proposition
%\ref{prop5.1} we get (\ref{eq2.3}).
%\end{proof}

Let $\WW^1=\{u\in L^2(\BR;V):\partial_tu\in L^2(\BR;V')\}$. We denote
by $\EE^1$ the time dependent Dirichlet  form associated
with $\{(B^{(t)},V),t\in\BR\}$, that is
\[
\mathcal{E}^1(u,v)=\left\{
\begin{array}{l}\langle-\partial_tu,v\rangle+\BB^1(u,v),
\quad u\in\WW^1,v\in L^2(\BR;V),\smallskip \\
\langle\partial_tv, u\rangle+\BB^1(u,v),\quad u\in
L^2(\BR;V),v\in\WW^1,
\end{array}
\right.
\]
where now $\langle\cdot,\cdot\rangle$ stands for the duality pairing
between $L^2(\BR;V')$ and $L^2(\BR;V)$,  and
\[
\BB^1(u,v)=\int_{\BR}B^{(t)}(u(t),v(t))\,dt.
\]
The resolvent (resp. coresolvent) associated with the form $\EE^1$  (see \cite[Chapter I]{St2}) will be denoted by $(G^1_{\alpha})_{\alpha>0}$ (resp. $(\hat G^1_{\alpha})_{\alpha>0}$).

Note that $\EE^1$ can be identified with some generalized Dirichlet forms
in the sense considered in \cite{St2,T1,T2} (see \cite[Example I.4.9(iii)]{St2}).

Let $\psi\in L^1(E_{0,T};m_1)\cap\BB(E_{0,T})$ be a function such that $0<\psi\le1$. We define the capacity $\mbox{Cap}_\psi$ associated with $\EE$ as in \cite[Section III.2]{St2} (see also \cite{T2}), that is for an open set $U\subset E_{0,T}$ we set
\[
\mbox{Cap}_{\psi}(U)=((G_1\psi)_U,\psi)_{\HH},
\]
where $(G_1\psi)_U$ is the 1-reduced function of $G_1\psi$, and then for arbitrary $A\subset E_{0,T}$ we set $\mbox{Cap}_{\psi}(A)=\inf\{\mbox{Cap}_{\psi}(U),\,U\supset A,\,U\mbox{ open}\}$. By \cite[Lemma III.2.9]{St2} and (\ref{eq2.26}), for any open $U\subset E_{0,T}$, $((G_1\psi)_U,\psi)_{\HH}=\EE_1((G_1\psi)_U,\hat G_1\psi)=\EE_1(G_1{\psi},(\hat G_1\psi)_U)=(\psi,(\hat G_1\psi)_U)$, so  $\mbox{Cap}_{\psi}$ associated with $\EE$ coincides with the capacity defined as $\mbox{Cap}_{\psi}$ but for the dual  form $\hat\EE$. In particular, $\mbox{Cap}_{\psi}$ coincides with the capacity considered in \cite{T1}.

By \cite[Proposition III.2.8]{St2}, $\mbox{Cap}_\psi$ is a Choquet capacity. Also note that by \cite[Proposition III.2.10]{St2}, the family of sets $A\subset E_{0,T}$ such that $\mbox{Cap}_\psi(A)=0$ is the same for all $\psi$ as above.

We will denote by $\mbox{Cap}^1$ the capacity associated with $\EE^1$ and  defined in \cite{O3} (the definition is also given in \cite{O2} and \cite[Section 6.2]{O4}). By \cite[Lemma 4.2]{O3} (or \cite[Lemma 6.2.8]{O4}) and \cite[Theorem A.1.2]{FOT}, $\mbox{Cap}^1$
is a Choquet capacity.
%\\
%K:\\
%Trzeba pokazac ze \cite[Lemma 4.2]{O3} dla zbiorow otwartych. Pokazemy wlasnosc (iii). Wiemy, ze zachodzi dla zbiorow  %otwartych relatywnie zwartych. Niech $A$ bedzie otwarty. Z definicji %$\mbox{Cap}^1(A)$ istnieje ciag zbiorow otwartych %relatywnie zwartych $\{B_k\}$ taki, ze %$B_k\subset A$ i $\mbox{Cap}^1(B_k)\rightarrow\mbox{Cap}^1(A)$. Mozna zalozyc, %ze $B_k\uparrow$. Mamy $\mbox{Cap}^1(A_n\cap B_k)\le\mbox{Cap}^1(A_n)$  i z wlasnosci (iii), %$\mbox{Cap}^1(A_n\cap %B_k)\uparrow\mbox{Cap}^1(A\cap B_k)=\mbox{Cap}^1(B_k)$. Dlatego %$\mbox{Cap}^1(B_k)\le\sup_n\mbox{Cap}^1(A_n)$ i w %konsekwencji %$\mbox{Cap}^1(A)\le\sup_n\mbox{Cap}^1(A_n)$. Przeciwna nierownosc jest oczywista, bo %$A_n\subset A$.\\
%KK\\

Some relations between $\mbox{Cap}_\psi$ and $\mbox{Cap}^1$, as well as relations between
$\mbox{Cap}_\psi$ and some other notions of parabolic capacity considered in the literature will be studied in Section \ref{sec3}.

We say that a set $A\subset E_{0,T}$ (resp. $A\subset E^1$) is $\EE$-exceptional (resp. $\EE^1$-exceptional) if
$\mbox{Cap}_\psi(A)=0$ (resp. $\mbox{Cap}^1(A)=0$), and we say that a property holds
$\EE$-quasi-everywhere (resp. $\EE^1$-quasi-everywhere) if the set of those $x\in E_{0,T}$ (resp. $x\in E^1$)
for which it does not hold is $\EE$-exceptional (resp. $\EE^1$-exceptional).

Recall that an increasing sequence $\{F_n\}$ of closed subsets of $E_{0,T}$ (resp. $E^1$) is
called an $\EE$-nest (resp. $\EE^1$-nest) if $\mbox{Cap}_\psi(F_n^c)\rightarrow0$ (resp. $\mbox{Cap}^1(F_n^c)\rightarrow0$) as $n\rightarrow\infty$.
An increasing sequence $\{F_n\}$ of closed subsets of $E^1$ is called a generalized $\EE^1$-nest if Cap$^1(K\setminus F_n)\rightarrow 0$
for every compact $K\subset E_{0,T}$.

A function $u:E_{0,T}\rightarrow\BR$ (resp. $u:E^1\rightarrow\BR$) is called
$\EE$-quasi-continuous (resp. $\EE^1$-quasi-continuous) if there exists an $\EE$-nest (resp. $\EE$-nest) $\{F_n\}$ such that
$u|_{F_n}$ is continuous for every $n\in\BN$.

It is known (see \cite[Proposition IV.1.8]{St2})  that each $u\in\WW_0$ has an $\EE$-quasi-continuous $m_1$-version. Similarly, each  $u\in\WW_T$ has an $\EE$-quasi-continuous $m_1$-version. We will denote them by $\tilde u$.

\subsection{Dirichlet forms and Markov processes}

Let $\Delta$ be  adjoint to $E^1$ as the point at infinity.
We adopt the convention that every function $f$ on
$E^1$ (resp. $E_{0,T}$) is extended to $E^1\cup\{\Delta\}$ (resp. $E_{0,T}\cup\{\Delta\}$) by setting $f(\Delta)=0$.

Let $\Omega_1=\{\omega:[0,\infty)\rightarrow E^1\cup \{\Delta\}:\omega(s)=\Delta,\, s\ge t\mbox{ if }\omega(t)=\Delta\}$ and
\[
X^1:\Omega_1\rightarrow \Omega_1,\quad X^1_t(\omega)=\omega(t).
\]
By \cite[Theorem 4.2]{O1}  (see also  \cite[Theorem 5.1]{O3} or \cite[Theorem 6.3.1]{O4}), there exists a Hunt process
$\BX^1=(\Omega_1,(\FF^1_t)_{t\ge0}, (X^1_t)_{t\ge0},(P^1_x)_{x\in
E^1\cup\{\Delta\}})$ with life time $\zeta^1$ and cemetery state
$\Delta$  associated with  $\EE^1$ in the resolvent sense, i.e.  for all
$\alpha>0$ and $f\in\BB_b(E^1)\cap L^2(E^1;m_1)$ the resolvent of $\BX^1$
defined as
\begin{equation}
\label{eqr.1}
R^1_{\alpha}f(x)=E^1_x\int^{\infty}_0e^{-\alpha t}f(X^1_t)\,dt,\quad
x\in E^1,\quad f\in\BB_b(E^1)
\end{equation}
($E^1_x$ stands for the expectation with respect to $P^1_x$) is an $\EE^1$-quasi-continuous $m_1$-version of the resolvent
$G^1_{\alpha}f$. Let
\[
\upsilon:\Omega_1\rightarrow \Omega_1,\quad (\upsilon(\omega))(t)=\pi(\omega(0))+t,
\]
where $\pi:E^1\rightarrow \mathbb R$ is the projection on $\mathbb R$. Note that
under the measure  $P_x$ with $x=(s,x^0)\in E^1$,
$\upsilon$ is the uniform motion  to the right, i.e.
$\upsilon(t)=\upsilon(0)+t$, $\upsilon(0)=s$.
By \cite[Theorem 4.2]{O1} (or \cite[Theorem 5.1]{O3}),
\begin{equation}
\label{eq2.8}
X^1_t=(\upsilon(t),X^0_{\upsilon(t)}),\quad t\ge0,
\end{equation}
under the measure  $P_x$ for every $x\in E^1$.

Recall that $B\in\BB(E^1)$ is called $\BX^1$-exceptional if $P^1_{x}(\sigma_B<\infty)=0$ for $m_1$-a.e. $x\in E^1$, where
\[
\sigma _B=\inf\{t>0: X^1_t\in B\}.
\]
By the remarks in \cite[page 298]{O3} (see also \cite[Lemma 2.3]{O2}),
\begin{equation}
\label{eq2.12}
B\mbox{ is $\EE^1$-exceptional if and only if }B\mbox{ is $\BX^1$-exceptional.}
\end{equation}

Similarly, by \cite[Section IV.2]{St2}, there exists a Hunt process
$\BX=(\Omega$, $(\FF_t)_{t\ge0}$, $(X_t)_{t\ge0}$, $(P_x)_{x\in
E_{0,T}\cup\{\Delta\}})$ with life time $\zeta$ associated with  $\EE$ in the resolvent sense, i.e.  for all
$\alpha>0$ and $f\in\BB_b(E_{0,T})\cap L^2(E_{0,T};m_1)$,
\begin{equation}
\label{eqr.2}
R_{\alpha}f(x)=E_x\int^{\infty}_0e^{-\alpha t}f(X_t)\,dt,\quad
x\in E_{0,T},\quad f\in\BB_b(E_{0,T})
\end{equation}
($E_x$ stands for the expectation with respect to $P_x$) is an $\EE$-quasi-continuous $m_1$-version of the resolvent $G_{\alpha}f$.  In this case
$\Omega=\{\omega:[0,\infty)\rightarrow E_{0,T}\cup \{\Delta\}:\omega(t)=\Delta,\,s\ge t$ if
$\omega(s)=\Delta\}$ and
\[
X:\Omega\rightarrow \Omega,\quad X_t(\omega)=\omega(t).
\]
It is clear that $\Omega\subset \Omega_1$ and $X=X^1$ on $\Omega$.

A set $B\in\BB(E_{0,T})$ is called $\BX$-exceptionl if $P_{x}(\sigma_B<\infty)=0$ for $m_1$-a.e. $x\in E_{0,T}$, where $\sigma_B$ is defined as before but with $\BX^1$ replaced by $\BX$. By \cite[Theorem IV.3.8]{St2},
\begin{equation}
\label{eq2.13}
B\mbox{ is $\EE$-exceptional if and only if }B\mbox{ is $\BX$-exceptional.}
\end{equation}

Let $f\in \BB(E_{0,T})\cap L^2(E_{0,T};m_1)$, $g\in \BB(E^1)\cap L^2(E^1;m_1)$ and $v=R_\alpha f$, $u=R^1_\alpha g$. Since $v$ is an $m_1$ version of $G_{\alpha}f$ and $u$ is an $m_1$-version of $G^1_{\alpha}g$, we have
$v\in \WW_T$, $u\in \WW^1$ and
\begin{equation}
\label{eq2.eqs}
\EE_\alpha(v,\eta)=(f,\eta)_{L^2(E_{0,T};m_1)},\,\, \eta\in \WW_0,\quad\quad
\EE^1_\alpha(u,\eta)=(g,\eta)_{L^2(E^1;m_1)},\,\, \eta\in \WW^1.
\end{equation}
Let $g=f$ on $E_{0,T}$ and $g=0$ on $E^1\setminus E_{0,T}$. Then, by (\ref{eqr.1}) and (\ref{eq2.8}),
$u_{|E_{0,T}}\in \WW_T$. From this and (\ref{eq2.eqs}) we deduce that $v=u_{|E_{0,T}}$. By this and  (\ref{eqr.1}) and (\ref{eqr.2}), we get
\begin{equation}
\label{eq8.c17}
E^1_x\int_0^\infty e^{-\alpha t} f(X^1_t)\,dt=E_x\int_0^\infty e^{-\alpha t} f(X_t)\,dt
\end{equation}
for every $\alpha\ge 0$ and every $f\in\BB^+(E^1)$ such that $\mathbf{1}_{E_{0,T}}f=f$.
Let $\Pi:\Omega_1\rightarrow \Omega$ be defined as
\[
\Pi(\omega)(t)=\omega(t),\quad t<T-\upsilon(0),\qquad \Pi(\omega)(t)=\Delta,\quad t\ge  T-\upsilon(0).
\]
Observe that
\[
E^1_x\int_0^\infty e^{-\alpha t} f(X^1_t)\,dt=\int_\Omega\int_0^\infty e^{-\alpha t} f(X_t)\,dt\,d(P^1_x\circ \Pi^{-1}).
\]
Hence
\begin{equation}
\label{eq2.cpr}
P_x=P^1_x\circ\Pi^{-1},\quad x\in E_{0,T}.
\end{equation}
Therefore, for every $x\in E_{0,T}$\,,
\begin{align*}
&P_x(\zeta\le T-\upsilon(0))=P_x\Big(\omega\in\Omega:\inf\{t\ge 0;\, X_t(\omega)\notin E_{0,T}\}\le T-\upsilon(0)(\omega)\Big)\\
&\quad=P_x\Big(\omega\in\Omega;\, \inf\{t\ge 0:X^1_t(\omega)\notin E_{0,T}\}\le T-\upsilon(0)(\omega)\Big)\\&
\quad=(P^1_x\circ \Pi^{-1})\Big(\omega\in\Omega:\inf\{t\ge 0;\, X^1_t(\omega)\notin E_{0,T}\}\le T-\upsilon(0)(\omega)\Big)\\
&\quad=P^1_x\Big(\omega\in\Omega_1:\inf\{t\ge 0;\, X^1_t(\Pi(\omega))\notin E_{0,T}\}\le T-\upsilon(0)(\Pi(\omega))\Big)\\
&\quad=P^1_x\Big(\omega\in\Omega_1:\inf\{t\ge 0: (\upsilon(t)(\Pi(\omega)),X^0_{\upsilon(t)(\Pi(\omega))}
(\Pi(\omega)))\notin E_{0,T}\}\\
&\qquad\qquad\qquad\qquad\qquad\qquad\qquad\qquad\qquad\qquad\qquad\le T-\upsilon(0)(\Pi(\omega))\Big)\\
&\quad\ge P^1_x\Big(\omega\in\Omega_1:\inf\{t\ge 0:\upsilon(t)(\Pi(\omega)) \notin (0,T)\}\le T-\upsilon(0)(\Pi(\omega))\Big)=1.
\end{align*}
Thus
\begin{equation}
\label{eq8.c14}
\zeta=\zeta\wedge(T-\upsilon(0))\quad P_x\mbox{-a.s.}
\end{equation}

In this paper, we denote by $\hat\BX^1=(\Omega_1,(\hat \FF^1_t)_{t\ge0}, ( X^1_t)_{t\ge0},(\hat P^1_x)_{x\in E^1\cup\{\Delta\}},\zeta^1)$ the dual process of $\BX^1$,
i.e. a Hunt process whose resolvents $\hat R^1_{\alpha}f$ are $\EE^1$-quasi-continuous $m_1$-versions of $\hat G^1_{\alpha}f$ for any $f\in\BB_b(E^1)\cap L^2(E^1;m_1)$.
By \cite[Theorem 5.1]{O3},
\begin{equation}
\label{eq2.11}
X^1_t=(\hat\upsilon(t),\hat X^0_{\hat\upsilon(t)}),\quad t\ge0,
\end{equation}
where $\hat\upsilon$ is the uniform motion  to the left, i.e.
$\hat\upsilon(t)=\hat\upsilon(0)-t$, $\hat\upsilon(0)=s$ under the measure  $\hat P_x$ with $x=(s,x^0)\in E^1$.

We denote by $\hat\BX=( \Omega,(\hat \FF_t)_{t\ge0}, (X_t)_{t\ge0},(\hat P_x)_{x\in E_{0,T}\cup\{\Delta\}},\zeta)$  the dual process of $\BX$. It is a Hunt process associated with the dual form $\hat\EE$, For all $\alpha>0$ and $f\in\BB_b(E_{0,T})\cap L^2(E_{0,T};m_1)$ the resolvent of $\hat\BX$ defined as
\[
\hat R_{\alpha}f(x)=\hat E_x\int^{\infty}_0e^{-\alpha t}f(X_t)\,dt,
\quad x\in E_{0,T},
\]
where  $\hat E_x$ denotes the expectation with respect to $\hat P_x$, is an $\EE$-quasi-continuous $m_1$-version of the coresolvent $\hat G_{\alpha}f$.
A modification of the argument used to prove (\ref{eq2.cpr}) and (\ref{eq8.c14}) shows that
\begin{equation}
\label{eq2.30}
\hat P_x=\hat P^1_x\circ\hat\Pi^{-1},\qquad \zeta=\zeta\wedge\hat\upsilon(0),\quad \hat P_x\mbox{-a.s.}
\end{equation}
for $x\in E_{0,T}$, where $\hat\Pi:\Omega_1\rightarrow\Omega$ is defined as
\[
\hat\Pi(\omega)(t)=\omega(t),\quad t<\hat\upsilon(0),\qquad \hat\Pi(\omega)(t)=\Delta,\quad t\ge\hat\upsilon(0).
\]

The relation (\ref{eq8.c14}) (resp. (\ref{eq2.30})) implies that the operator $G_\alpha$ (resp. $\hat G_\alpha$) is well defined
for $\alpha=0$,  and that $R_0f$ (resp. $\hat Rf$) is  $\EE$-quasi-continuous $m_1$-version of $G_0 f$ (resp. $\hat G_0 f$) for every $f\in\HH$, Indeed,  by (\ref{eq8.c14}), for every positive $f\in\BB(E_{0,T})$,
\[
Rf\le e^{\alpha T}R_\alpha f
\]
(we write $R$ instead of $R_0$). Hence,  for every $f\in \HH\cap \BB(E_{0,T})$, $Rf\in \HH$. By the resolvent identity,
\[
Rf=R_\alpha f+\alpha R_\alpha R f.
\]
Therefore $Rf\in G_\alpha(\HH)$, and by (\ref{eq2.26}),
\begin{equation}
\label{eq2.26cc}
\EE(Rf,g)=(f,g)_{\HH},\quad f\in\HH,g\in\VV.
\end{equation}
In other words, $G_0f=Rf$ $m_1$-a.e. Of course, $Rf$ is $\EE$-quasi-continuous since $Rf=R_\alpha(f+\alpha Rf)$.
A similar argument applies to  $\hat R_0$ and $\hat G_0$.

\subsection{Smooth measures}

A Borel (signed) measure $\mu$ on $E_{0,T}$ is called $\EE$-smooth if it does not charge
exceptional sets, i.e. for any Borel set $B\subset E_{0,T}$, if $\mbox{Cap}_{\psi}(B)=0$, then $\mu(B)=0$,  and there exists an $\EE$-nest $\{F_n\}$ of compact subsets of $E_{0,T}$
such that $|\mu|(F_n)<\infty$ for $n\in\BN$, where $|\mu|$ denotes
the variation of $\mu$. A Borel measure $\mu$ on $E^1$ is called $\EE^1$-smooth if it does not charge $\EE^1$-exceptional sets and there exists a generalized $\EE^1$-nest $\{F_n\}$
such that  $|\mu|(F_n)<\infty$ for $n\in\BN$.

The set of all $\EE$-smooth (resp. $\EE^1$-smooth) measures will be denoted by $S(E_{0,T})$ (resp. $S^1(E^1)$).  $\MM_b(E_{0,T})$
(resp. $\MM^1_b(E^1)$)
denotes the set of all bounded Borel measures on $E_{0,T}$ (resp. $E^1$), i.e. Borel
measures $\mu$ such that $|\mu|(E_{0,T})<\infty$ (resp. $|\mu|(E^1)<\infty$). $\MM_{0,b}(E_{0,T})$ (resp. $\MM^1_{0,b}(E^1)$) denotes
the subset of $\MM_b(E_{0,T})$ (resp. $\MM^1_{b}(E^1)$) consisting of all $\EE$-smooth (resp. $\EE^1$-smooth) measures.

Let $\mu$ be a positive Borel measure on $E_{0,T}$ such that $\mu$ does not charge
sets of zero capacity. We call it a measure of finite energy
integral if there is $C\ge0$ such that
\begin{equation}
\label{eq2.28}
\Big|\int_{E_{0,T}}\tilde\eta\,d\mu\Big|\le C\|\eta\|_{\WW},\quad \eta\in \WW_0.
\end{equation}
If there is $C\ge0$ such that
\[
\Big|\int_{E_T}\tilde\eta\,d\mu\Big|\le C\|\eta\|_{\WW},\quad\eta\in\WW_T,
\]
we call it a measure of finite co-energy integral. In both cases, $\tilde\eta$ denotes an $\EE$-quasi-continuous $m_1$-version of $\eta$.
The set of all positive smooth measures on $E_{0,T}$ of finite energy (resp. co-energy) integral will be denoted by $S_0(E_{0,T})$ (resp. $\hat S_0(E_{0,T})$).

\begin{lemma}
\label{lem2.3} Let $\mu$ be a  positive Borel  measure on $E_{0,T}$. Then
\begin{enumerate}
\item[\rm(i)]$\mu\in S_0(E_{0,T})$ if and only if there exists $u\in\VV$ such that
\begin{equation}
\label{eq2.1} \int_{E_{0,T}}\tilde v\,d\mu=\EE_1(u,v),\quad v\in\WW_0.
\end{equation}
\item[\rm(ii)]$\mu\in\hat S_0(E_{0,T})$ if and only if there exists $u\in\VV$ such that
\begin{equation}
\label{eq2.25} \int_{E_{0,T}}\tilde v\,d\mu=\EE_1(v,u),\quad v\in\WW_T.
\end{equation}
\end{enumerate}
\end{lemma}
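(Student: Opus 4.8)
The plan is to prove (i) in full and to obtain (ii) by the \emph{symmetric} argument applied to the dual form $\hat\EE$: throughout, one replaces $G_\alpha$, $\WW_0$ and the identity $\langle\partial_t v,v\rangle=\tfrac12\|v(T)\|^2_H$ (valid for $v\in\WW_0$, since $v(0)=0$) by $\hat G_\alpha$, $\WW_T$ and $\langle-\partial_t v,v\rangle=\tfrac12\|v(0)\|^2_H$ (valid for $v\in\WW_T$, by the first line of (\ref{eq2.23})). So I treat (i) as two implications.

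\emph{The implication ``if''.} Assume $u\in\VV$ satisfies (\ref{eq2.1}). By the second line of (\ref{eq2.23}), $\EE_1(u,v)=\langle\partial_t v,u\rangle+\BB(u,v)+(u,v)_\HH$ for $v\in\WW_0$, and the sector condition (a) together with (c) gives $|\EE_1(u,v)|\le C\|u\|_\VV\,\|v\|_\WW$. Combined with (\ref{eq2.1}) this is exactly (\ref{eq2.28}), so $\mu$ has finite energy integral. That $\mu$ is smooth I would read off from the potential theory of $\mbox{Cap}_\psi$: for a $\mbox{Cap}_\psi$-null set $N$ pick open $U_n\supset N$ with $\mbox{Cap}_\psi(U_n)\to0$ and equilibrium potentials $e_{U_n}\in\WW_0$ with $e_{U_n}\ge\mathbf{1}_{U_n}$ q.e.\ and $\|e_{U_n}\|_\WW\to0$; then $\mu(N)\le\mu(U_n)\le\int_{E_{0,T}}\tilde e_{U_n}\,d\mu=\EE_1(u,e_{U_n})\to0$, so $\mu$ charges no null set, and a nest of compacts of finite $\mu$-measure is produced as in the classical theory (\cite{FOT}, adapted through \cite{St2,T1}).

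\emph{The implication ``only if''.} Let $\mu\in S_0(E_{0,T})$. By (\ref{eq2.28}) the functional $L(v)=\int_{E_{0,T}}\tilde v\,d\mu$ is well defined (as $\mu$ charges no null set and $\tilde v$ is defined q.e.) and lies in $\WW_0'$, so it suffices to invert the parabolic form. Let $J:\VV\to\WW_0'$ be the bounded operator $\langle Ju,v\rangle=\EE_1(u,v)$; I claim it is an isomorphism, whence $u:=J^{-1}L\in\VV$ solves (\ref{eq2.1}). Injectivity: if $\EE_1(u,v)=0$ for all $v\in\WW_0$, then taking $v=\hat G_1 f$ and using (\ref{eq2.26}) gives $(f,u)_\HH=0$ for every $f\in\HH$, so $u=0$. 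Surjectivity: pass to the adjoint $J':\WW_0\to\VV'$, $J'v=\EE_1(\cdot,v)$ (both spaces being reflexive). For $v\in\WW_0$ one has $\langle\partial_t v,v\rangle=\tfrac12\|v(T)\|^2_H\ge0$, so dropping this term and using (c) gives $\EE_1(v,v)\ge c^{-1}\|v\|_\VV^2$, whence $\|J'v\|_{\VV'}\ge c^{-1}\|v\|_\VV$; since moreover $\|\partial_t v\|_{\VV'}\le\|J'v\|_{\VV'}+C\|v\|_\VV$, this yields $\|J'v\|_{\VV'}\ge c'\|v\|_\WW$. Thus $J'$ is bounded below, hence injective with closed range, and the closed range theorem gives $J(\VV)={}^\perp(\ker J')=\WW_0'$, i.e.\ surjectivity.

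\emph{Main obstacle.} The decisive step is surjectivity of $J$ onto all of $\WW_0'$. The bound (\ref{eq2.28}) controls $L$ only by $\|\cdot\|_\WW$, which is strictly weaker than control by $\|\cdot\|_\VV$, so a plain Lax--Milgram/Lions argument in $\VV$ does not apply; one genuinely needs the hidden-regularity estimate $\|\partial_t v\|_{\VV'}\le\|J'v\|_{\VV'}+C\|v\|_\VV$ (recovering the time derivative from the equation) to upgrade the $\VV$-coercivity to an isomorphism onto the larger dual $\WW_0'$. The other delicate point is the bookkeeping of quasi-continuous versions: (\ref{eq2.1}) must be read with $\mbox{Cap}_\psi$-quasi-continuous representatives, and one uses that $\mbox{Cap}_\psi$ is independent of $\psi$ and agrees with the capacity of $\hat\EE$, so that the potentials tested against $\mu$ indeed belong to $\WW_0$ (and, in (ii), to $\WW_T$).
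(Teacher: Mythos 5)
Your proposal is correct in substance, but for the substantive implication (existence of $u$ when $\mu\in S_0(E_{0,T})$) it takes a genuinely different route from the paper. The paper imports the existence of $u$ from \cite[Lemma 4.2]{T2} applied to the dual form, which gives the identity only against test functions of the form $\alpha\hat G_\alpha\eta$, and then upgrades it to all of $\WW_0$ by letting $\alpha\rightarrow\infty$, using $\alpha\hat G_\alpha\eta\rightarrow\eta$ strongly in $\WW_0$ together with \cite[Corollary III.3.8]{St2}, Fatou's lemma and \cite[Lemma 2]{Pierre3}. You instead note that $L(v)=\int_{E_{0,T}}\tilde v\,d\mu$ is, by the very definition of finite energy integral, an element of $\WW_0'$, and invert the form directly: injectivity of $J$ via the coresolvent and (\ref{eq2.26}) is fine, and your surjectivity argument is sound — the adjoint $J^*:\WW_0\rightarrow\VV'$ is bounded below in the full $\WW$-norm because $\langle\partial_tv,v\rangle=\frac12\|v(T)\|^2_H\ge0$ on $\WW_0$ gives $\VV$-coercivity and $\partial_tv$ is then recovered from $J^*v$ modulo a $\VV$-bounded term, so the closed range theorem applies. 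This is self-contained, avoids the resolvent approximation and the delicate measure-theoretic limit passage entirely, and is arguably cleaner; what the paper's route buys instead is that $u$ arrives already identified as a potential in the sense of the generalized Dirichlet form machinery used later. Two loose ends in your write-up, neither fatal: first, in the ``if'' direction the functions realizing small capacity live naturally in $\WW$ (or $\WW_T$), not in $\WW_0$, so a time cut-off (working with compact null sets, which suffices since $\mbox{Cap}_\psi$ is Choquet) is needed before you may test them in (\ref{eq2.1}); second, the inequality $\mu(U_n)\le\int\tilde e_{U_n}\,d\mu$ presupposes that $\mu$ does not charge the exceptional subset of $U_n$ where $\tilde e_{U_n}<1$, which is close to what you are proving — the paper sidesteps this by treating the whole ``if'' direction as immediate from the definitions, since (\ref{eq2.1}) is only meaningful for measures already compatible with quasi-continuous versions.
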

\begin{proof}
We provide the proof of (i). The proof of (ii) is analogous. If (\ref{eq2.1}) is satisfied, then of course $\mu\in S_0(E_{0,T})$. Suppose that
$\mu$ is of finite energy integral. Then, by \cite[Lemma 4.2]{T2} applied to the dual form $\hat\EE$, for every positive $\eta\in\WW_0$ there exists $u\in $ such that
\begin{equation}
\label{eq2.29}
\int_{E_{0,T}}\alpha \widetilde{\hat G_\alpha\eta}\,d\mu=\EE_1(u,\alpha \hat G_\alpha\eta ),
\end{equation}
where $\widetilde{\hat G_{\alpha}\eta}$ is  a quasi-continuous modification of $\hat G_{\alpha}\eta$. By \cite[Proposition 2.7]{St1}, $\alpha\hat G_\alpha\eta\rightarrow \eta$ strongly in $\WW_0$. Therefore letting $\alpha\rightarrow\infty$ in (\ref{eq2.29}) and applying  \cite[Corollary III.3.8]{St2}, Fatou's lemma  and \cite[Lemma 2]{Pierre3}, we get (\ref{eq2.28}).
\end{proof}

Let $\mu\in S_0(E_{0,T})$. The element $u\in\VV$ defined by  (\ref{eq2.1}) is uniquely determined. We will denote it by $U_1\mu$. Similarly, for $\mu\in\hat S_0(E_{0,T})$, the element $u\in\VV$ defined by  (\ref{eq2.25}) is uniquely determined. We will denote it by $\hat U_1\mu$. We also set
\[
U_\alpha\mu= U_1\mu+(1-\alpha)G_{\alpha} U_1\mu,
\qquad\hat U_\alpha\mu= \hat U_1\mu+(1-\alpha)\hat G_{\alpha}\hat U_1\mu
\quad \alpha\ge0.
\]
It is clear that for any $\alpha\ge0$,
\begin{equation}
\label{eq2.22}
\int_{E_{0,T}}\tilde v\,d\mu=\EE_\alpha(U_\alpha\mu,v),\,\, v\in\WW_0,\qquad\int_{E_{0,T}}\tilde v\,d\mu=\EE_\alpha(v,\hat U_\alpha\mu),\,\,v\in\WW_T.
\end{equation}

\section{Parabolic capacity}
\label{sec3}

Our basic capacity associated with $\EE$ is $\mbox{Cap}_{\psi}$. Exceptional sets with respect
to $\mbox{Cap}_{\psi}$ have nice probabilistic interpretation given by (\ref{eq2.13}).
However, in the literature devoted to partial differential equations, usually  some other notions of capacity are used.
In this section, we recall some of them and  prove that they are all equivalent to $\mbox{Cap}_{\psi}$. These results will be needed in the next sections.

\begin{lemma}
\label{lm.eqc12}
For any $A\subset E_{0,T}$, $\mbox{\rm Cap}_\psi(A)=0$ if and only if $\mbox{\rm Cap}^1_{|E_{0,T}}(A)=0$.
\end{lemma}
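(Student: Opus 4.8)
The plan is to pass from the two analytically defined capacities to their probabilistic descriptions via exceptional sets, and then to exploit that $\BX$ is $\BX^1$ killed upon leaving the time-strip. Since the family of sets of zero $\mbox{\rm Cap}_\psi$-capacity does not depend on $\psi$, we have $\mbox{\rm Cap}_\psi(A)=0$ iff $A$ is $\EE$-exceptional, and by (\ref{eq2.13}) this holds iff $A$ is $\BX$-exceptional, i.e. $P_x(\sigma_A<\infty)=0$ for $m_1$-a.e. $x\in E_{0,T}$. Likewise, by (\ref{eq2.12}), $\mbox{\rm Cap}^1(A)=0$ (which for $A\subset E_{0,T}$ is what $\mbox{\rm Cap}^1_{|E_{0,T}}(A)=0$ means) holds iff $A$ is $\BX^1$-exceptional, i.e. $P^1_x(\sigma_A<\infty)=0$ for $m_1$-a.e. $x\in E^1$. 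Thus it suffices to compare these two hitting conditions for $A\subset E_{0,T}$.

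First I would establish a pointwise identity of hitting probabilities on the strip. Recall that $\Omega\subset\Omega_1$, that $X=X^1$ on $\Omega$, and that $P_x=P^1_x\circ\Pi^{-1}$ by (\ref{eq2.cpr}), where $\Pi$ sends the path to the cemetery at the deterministic time $T-\upsilon(0)$ at which its time-coordinate reaches $T$. Because $A\subset(0,T)\times E$, any visit of $X^1$ to $A$ occurs at a path-time $t$ with $\upsilon(0)+t\in(0,T)$, hence strictly before $T-\upsilon(0)$; therefore $\omega$ hits $A$ if and only if the killed path $\Pi(\omega)$ does, so that the $\BX^1$-event $\{\sigma_A<\infty\}$ equals $\Pi^{-1}$ of the $\BX$-event $\{\sigma_A<\infty\}$. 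Pushing forward by $\Pi$ gives $P^1_x(\sigma_A<\infty)=P_x(\sigma_A<\infty)$ for every $x\in E_{0,T}$. The lemma is thereby reduced to the purely $\BX^1$-statement: for $A\subset E_{0,T}$,
\[
P^1_x(\sigma_A<\infty)=0\ \ m_1\mbox{-a.e. }x\in E_{0,T}\quad\Longleftrightarrow\quad P^1_x(\sigma_A<\infty)=0\ \ m_1\mbox{-a.e. }x\in E^1.
\]

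The implication ``$\Leftarrow$'' is immediate, so the whole content is the extension ``$\Rightarrow$''. Write $h(x)=P^1_x(\sigma_A<\infty)$. For $x=(s,x^0)$ with $s\ge T$ the uniform motion to the right (\ref{eq2.8}) keeps the time-coordinate $\ge T$, so $X^1$ never enters $(0,T)\times E\supset A$ and $h(x)=0$. For $s\le0$ I would apply the strong Markov property at the deterministic time $\delta-s$ at which the time-coordinate reaches a level $\delta\in(0,T)$, together with the sub-invariance of $m$ for the (sub-Markovian) transition associated with $\{(B^{(t)},V)\}$; this bounds $\int_E h(s,x^0)\,m(dx^0)$ by $\int_{\{\delta\}\times E}h\,dm$ plus a term $\int_E P^1_{(s,x^0)}(\sigma_A\le\delta-s)\,m(dx^0)$ accounting for hits before level $\delta$ is reached. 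Letting $\delta\downarrow0$ along levels on which the slice integral vanishes (these are a.e., since $\int_0^T\!\int_{\{t\}\times E}h\,dm\,dt=\int_{E_{0,T}}h\,dm_1=0$), and using that $A$ cannot be hit before the path has entered the open strip, I would obtain $\int_E h(s,x^0)\,m(dx^0)=0$ for a.e. $s\le0$. Integrating over $s$ and combining the three regimes yields $h=0$ $m_1$-a.e. on $E^1$, i.e. $A$ is $\BX^1$-exceptional.

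I expect the main obstacle to be precisely the behaviour at the time-boundary $t=0$: one must rule out that $\BX^1$ started just below the strip enters $A$ ``instantaneously'' through the slice $\{0\}\times E$, i.e. control $P^1_{(s,x^0)}(\sigma_A\le\delta-s)$ as $\delta\downarrow0$ and show that the regular points of $A$ lying on $\{0\}\times E$ carry no $m$-mass. The regime $s\ge T$ and the direction ``$\Leftarrow$'' are routine; the spatial sub-invariance of $m$ and the uniform-motion structure (\ref{eq2.8}) are the tools that should make the delicate boundary estimate go through.
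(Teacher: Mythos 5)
Your reduction to hitting probabilities and the identification $P^1_x(\sigma_A<\infty)=P_x(\sigma_A<\infty)$ for $x\in E_{0,T}$ coincide with the first half of the paper's proof (the paper first reduces to Borel $A$ using that both set functions are Choquet capacities; you should do the same before invoking (\ref{eq2.12}) and (\ref{eq2.13}), which are stated for Borel sets). The arguments diverge on the only hard point, namely extending $P^1_x(\sigma_A<\infty)=0$ from $m_1$-a.e.\ $x$ in the strip to $m_1$-a.e.\ $x\in E^1$, and here your proposal is incomplete. You split $\{\sigma_A<\infty\}$ into hits occurring after the time-coordinate reaches level $\delta$ (controlled by $\int_E h(\delta,\cdot)\,dm=0$ for a.e.\ $\delta$) and the residual event $\{\sigma_A\le\delta-s\}$, and you concede that you do not know how to make the latter vanish as $\delta\downarrow0$, i.e.\ how to rule out instantaneous entry through the slice $\{0\}\times E$. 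As written this is a genuine gap: the decisive estimate is only conjectured. It is, however, removable by reorganizing the decomposition. Since $A\subset(0,T)\times E$, every actual visit of $X^1$ to $A$ occurs at a path-time $t$ with $\upsilon(0)+t>0$, so for $s\le0$ the event $\{\sigma_A<\infty\}$ is \emph{exactly} the increasing union over $n$ of $\{X^1_t\in A\mbox{ for some }t\ge\delta_n-s\}$ for any $\delta_n\downarrow0$; there is no residual boundary event to control. The Markov property at the deterministic time $\delta_n-s$ together with the sub-invariance $\int_E T_{s,\delta_n}g\,dm\le\int_E g\,dm$ bounds the $m(dx^0)$-integral of the $n$-th term by $m(A_{\delta_n})+\int_E h(\delta_n,\cdot)\,dm$, where $A_{\delta_n}$ is the time-$\delta_n$ section of $A$; both summands vanish for a.e.\ $\delta_n$ (for the first you need $m_1(A)=0$, which follows from the in-strip statement), and monotone convergence finishes. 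No assertion about regular points of $A$ on the slice is needed.

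The paper settles this regime by an entirely different device: it first shows $\hat P^1_x(\sigma_A<\infty)=0$ for $m_1$-a.e.\ $x\in(-\infty,T]\times E$ (trivially for time-coordinate $s\le0$, since $\hat\upsilon$ moves to the left and the co-process never enters the strip, and via the dual identification (\ref{eq2.30}) for $s\in(0,T)$), and then transfers this statement to the forward process through Stannat's duality formula (\ref{eq2.672}) with a weight $\eta$ supported in $(-\infty,T)\times E$. Your forward-propagation scheme, once repaired as above, is arguably more elementary in that it avoids the dual process and the duality formula entirely, at the price of using the $L^1$-sub-invariance of $m$ for the time-inhomogeneous transition function (valid here because the dual forms are Dirichlet forms, exactly as exploited in the proof of Lemma \ref{lm.wtwt5}). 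Both routes are sound, but your submission stops short of proving the one estimate on which the lemma actually turns.
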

\begin{proof}
Since both  $\mbox{\rm Cap}_\psi$ and $\mbox{\rm Cap}^1_{|E_{0,T}}$  are Choquet capacities, it is enough to prove that for any $B\in\BB(E_{0,T})$,
$\mbox{Cap}_\psi(B)=0$ if and only if $\mbox{Cap}^1(B)=0$. Suppose that  Cap$^1(B)=0$.
Then, by (\ref{eq2.12}),  $P^1_{x}(\sigma_B<\infty)=0$ for $m_1$-a.e. $x\in E^1$.
Observe that
\[
\{\omega\in\Omega_1;\, \exists _{t>0}\,\, X^1_t(\omega)\in B\}=\Pi^{-1}(\{\omega\in\Omega;\, \exists _{t>0}\,\, X_t(\omega)\in B\}).
\]
Hence, by (\ref{eq2.cpr}), $P_{x}(\sigma_B<\infty)=0$ for $m_1$-a.e. $x\in E_{0,T}$.
Consequently, $\mbox{Cap}_\psi(B)=0$  by (\ref{eq2.13}). Now suppose  that Cap$_\psi(B)=0$.  Then, by \cite[Lemma III.2.9]{St2} and (\ref{eq2.13}),
$P_{x}(\sigma_B<\infty)=\hat P_{x}(\sigma_B<\infty)=0$ for $m_1$-a.e.
$x\in E_{0,T}$. From this and (\ref{eq2.8}) and (\ref{eq2.cpr}) it follows that
\begin{equation}
\label{eq2.he1}
P^1_{x}(\sigma_B<\infty)=0\quad \mbox{for }m_1\mbox{-a.e. } x\in[0,\infty)\times E.
\end{equation}
Similarly, from (\ref{eq2.11}) and (\ref{eq2.30}) it follows that
\begin{equation}
\label{eq2.he2}
\quad \hat P^1_{x}(\sigma_B<\infty)=0\quad\mbox{for } m_1\mbox{-a.e. }x\in(-\infty,T]\times E.
\end{equation}
By \cite[Lemma III.2.9]{St2}, for every $\eta\in \BB^+_b(E^1)$,
\begin{equation}
\label{eq2.672}
E^1_{\eta\cdot m_1}e^{-\sigma_B}\int_{\sigma_B}^{\infty}e^{-t}\eta(X^1_t)\,dt
=\hat E^1_{\eta\cdot m_1}e^{- \sigma_B}\int_{\sigma_B}^{\infty}e^{-t}\eta(X^1_t)\,dt,
\end{equation}
where $E^1_{\eta\cdot m_1}$ denotes the expectation with respect to the measure $P^1_{\eta\cdot m_1}$ defined as $P_{\eta\cdot m_1}(\cdot)=\int_{E^1}P_x(\cdot)\eta(x)\,m_1(dx)$ and $\hat E^1_{\eta\cdot m_1}$
denotes the expectation with respect to $\hat P^1_{\eta\cdot m_1}$ defined as $ P^1_{\eta\cdot m_1}$ but with $P_x$ replaced by $\hat P_x$.
Let $\eta=0$ on $[T,\infty)\times E$ and $\eta>0$ on $(-\infty,T)\times E.$ Then, by (\ref{eq2.he2}), the right-hand side of (\ref{eq2.672}) equals zero, which implies that $P^1_{x}(\sigma_B<\infty)=0$ for $m_1$-a.e. $x\in(-\infty,T]\times E$. This when combined with (\ref{eq2.he1}) shows that $P^1_{x}(\sigma_B<\infty)=0$ for $m_1$-a.e. $x\in E^1$. By (\ref{eq2.12}),  $\mbox{Cap}^1(B)=0$.
\end{proof}

We now recall the capacity considered in Pierre  \cite{Pierre3} (see also \cite{Pierre1,Pierre2}).
Let $U$ be a relatively compact open  subset of $E_{0,T}$. By \cite[Proposition III.1.6]{St2}, for every $n\ge1$
there exists a unique solution $e^n_U\in \WW_T$ of the following problem
\[
\EE_1(e_U^n,\eta)=n((e_U^n-\mathbf{1}_{U})^-,\eta),\quad \eta\in\VV.
\]
By \cite[Proposition III.1.7]{St2},  $\{e^n_U\}$  converges strongly in $\HH$
and weakly in $\VV$. Set $e_U=\lim_{n\rightarrow \infty} e^n_U$.
By \cite[Proposition III.1.7]{St2}, for every $\eta\in \WW_0$ such that $\eta\ge \mathbf{1}_U$ $m_1$-a.e. we have
\begin{equation}
\label{eq.pt1}
\EE_1(e_U,\eta)\ge\BB_1(e_U,e_U).
\end{equation}
By Lemma \ref{lem2.3} (see also \cite{Pierre3}), there exists a measure $\mu_U\in S_0(E_{0,T})$ such that $e_U=U_1\mu_U$.
By \cite[Proposition II.1]{Pierre2}, supp$[\mu]\subset \overline U$.
In  \cite{Pierre3}, the capacity of $U$ is defined by
\[
c_0(U)=\mu_U(E_{0,T}).
\]
As usual, for an arbitrary $A\subset E_{0,T}$, we define
\[
c_0(A)=\inf\{c_0(U),\, A\subset U, U\mbox{ open}\}.
\]
By \cite[Proposition 2]{Pierre3}, $c_0$ is a Choquet capacity.

\begin{lemma}
\label{equiv.1}
For any $A\subset E_{0,T}$, $\mbox{\rm Cap}_\psi(A)=0$ if and only if $c_0(A)=0$.
\end{lemma}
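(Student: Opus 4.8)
The plan is to prove both implications for Borel $B$, which suffices since $\mbox{Cap}_\psi$ and $c_0$ are Choquet capacities. Throughout I would work with the probabilistic description of the objects attached to the Hunt process $\BX$ of Section \ref{sec2}: the equilibrium potential $e_U=U_1\mu_U$ admits the representation $e_U(x)=E_x[e^{-\sigma_U}]$, while the $1$-reduced function obeys the balayage formula $(G_1\psi)_U(x)=E_x[e^{-\sigma_U}R_1\psi(X_{\sigma_U})]$, where $R_1\psi=\widetilde{G_1\psi}$ is the $1$-excessive (hence finely continuous) quasi-continuous version from (\ref{eqr.2}). Since $0<\psi\le1$, both $R_1\psi$ and $\hat R_1\psi$ take values in $(0,1]$.

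The implication $c_0(B)=0\Rightarrow\mbox{Cap}_\psi(B)=0$ follows from the estimate $\mbox{Cap}_\psi(U)\le c_0(U)$ for relatively compact open $U$. To get it I test the defining relation $\int\tilde v\,d\mu_U=\EE_1(e_U,v)$, $v\in\WW_0$ (Lemma \ref{lem2.3}), with $v=\hat G_1\psi\in\WW_0$ and use (\ref{eq2.26}): this gives $(e_U,\psi)_\HH=\EE_1(e_U,\hat G_1\psi)=\int\hat R_1\psi\,d\mu_U\le\mu_U(E_{0,T})=c_0(U)$ because $\hat R_1\psi\le1$. On the other hand $(G_1\psi)_U\le e_U$ (as $R_1\psi\le1$ in the balayage formula), so $\mbox{Cap}_\psi(U)=((G_1\psi)_U,\psi)_\HH\le(e_U,\psi)_\HH\le c_0(U)$. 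Taking the infimum over open $U\supset B$ yields the implication.

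For the converse I would first reduce, by countable subadditivity, to $B$ relatively compact, fix a relatively compact open $U_1\supset B$, and fix a cut-off $\chi\in\WW_0$ (e.g.\ a dual equilibrium potential of a neighbourhood of $\overline{U_1}$) with $\tilde\chi\ge1$ on $\overline{U_1}$. Given $\mbox{Cap}_\psi(B)=0$, outer regularity provides relatively compact open $U_n\downarrow$ with $B\subset U_n\subset U_1$ and $\mbox{Cap}_\psi(U_n)\to0$. Since $\tilde\chi\ge\mathbf{1}_{U_n}$ q.e., (\ref{eq.pt1}) and assumption (c) give $\|e_{U_n}\|_\VV^2\le c\,\BB_1(e_{U_n},e_{U_n})\le c\,\EE_1(e_{U_n},\chi)\le C\|\chi\|_\WW\|e_{U_n}\|_\VV$, so $\{e_{U_n}\}$ is bounded in $\VV$; being pointwise decreasing it converges weakly in $\VV$ and pointwise to some $e_*$. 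Then $\mu_{U_n}(E_{0,T})\le\int\tilde\chi\,d\mu_{U_n}=\EE_1(e_{U_n},\chi)=\langle\partial_t\chi,e_{U_n}\rangle+\BB_1(e_{U_n},\chi)$, a continuous linear functional of $e_{U_n}\in\VV$, whence $\mu_{U_n}(E_{0,T})\to\langle\partial_t\chi,e_*\rangle+\BB_1(e_*,\chi)$. It then remains only to prove $e_*=0$, for then $c_0(B)\le c_0(U_n)=\mu_{U_n}(E_{0,T})\to0$.

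The step showing $e_*=0$ is the one I expect to be the main obstacle. Here I exploit that $\mbox{Cap}_\psi(U_n)\to0$ forces the reduced functions to vanish: $(G_1\psi)_{U_n}$ decreases to some $h\ge0$ with $(h,\psi)_\HH=\lim((G_1\psi)_{U_n},\psi)_\HH=\lim\mbox{Cap}_\psi(U_n)=0$, so $h=0$ $m_1$-a.e.\ as $\psi>0$. Writing $\sigma_*=\lim_n\sigma_{U_n}$ and passing to the limit in $(G_1\psi)_{U_n}(x)=E_x[e^{-\sigma_{U_n}}R_1\psi(X_{\sigma_{U_n}})]$ — using quasi-left-continuity of $\BX$ (so $X_{\sigma_{U_n}}\to X_{\sigma_*}$ on $\{\sigma_*<\infty\}$), fine continuity of the excessive function $R_1\psi$, and Fatou's lemma — yields $0=h(x)\ge E_x[e^{-\sigma_*}R_1\psi(X_{\sigma_*});\,\sigma_*<\infty]$ for $m_1$-a.e.\ $x$. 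Since $R_1\psi>0$ q.e., this forces $P_x(\sigma_*<\infty)=0$ for $m_1$-a.e.\ $x$, and therefore $e_*(x)=E_x[e^{-\sigma_*}]=0$ $m_1$-a.e. The two delicate points to handle with care are the justification of the balayage/representation formulas for $e_U$ and $(G_1\psi)_U$ in the present non-symmetric time-dependent setting, and this limit passage in the reduced functions; note that the strict positivity of $R_1\psi$ is precisely what lets one conclude without any control on $\bigcap_n\overline{U_n}$, which need not be exceptional even when $B$ is.
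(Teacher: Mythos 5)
Your proof is correct and follows essentially the same route as the paper's: the inequality $\mbox{Cap}_\psi(U)\le c_0(U)$ via the chain $((G_1\psi)_U,\psi)_{\HH}\le(e_U,\psi)_{\HH}=\int\widetilde{\hat G_1\psi}\,d\mu_U\le\mu_U(E_{0,T})$, and, for the converse, the bound $\BB_1(e_{U_n},e_{U_n})\le\EE_1(e_{U_n},\eta)$ from (\ref{eq.pt1}), the resulting weak convergence of $e_{U_n}$ to $0$ in $\VV$, and the vanishing of $c_0(U_n)\le\EE_1(e_{U_n},\eta)$. The only real deviation is that where the paper invokes \cite[Proposition IV.3.4]{St2} to conclude $e_{U_n}\searrow0$ $m_1$-a.e., you derive this directly from the balayage representation of the reduced functions $(G_1\psi)_{U_n}$ together with the strict positivity of $R_1\psi$ and the relative compactness of $U_1$ --- a correct, self-contained substitute for that citation.
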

\begin{proof}
Let $U$ be a relatively compact open subset of $E_{0,T}$. Let $\widetilde{\hat G_1\psi}$ be a quasi-continuous modification of $\hat G_1\psi$.
By the definitions of Cap$_\psi$ and  $U_1\mu_U$, and the fact that $\hat G_1$ is Markovian, we have
\[
\mbox{Cap}_\psi(U)\le\int_{E_{0,T}}e_U\psi\,dm_1=\EE_1(e_U,\hat G_1\psi)=\int_{E_{0,T}}\widetilde{\hat G_1\psi}\,d\mu_U\le\mu_U(E_{0,T})=c_0(U).
\]
Hence Cap$_\psi\le c_0$. Let $K$ be a compact subset of $E_{0,T}$ such that $\mbox{Cap}_\psi(K)=0$.
Then there exists a nonincreasing sequence $U_n$ of open relatively compact subsets of $E_{0,T}$ such that Cap$_\psi(U_n)\searrow 0$ and $K\subset U_n$. Let $\eta\in \WW_0\cap C_c(E_{0,T})$ be such that
$\eta\ge \mathbf{1}_{U_1}$ (see \cite[Lemma II.3]{Pierre2}). Then, by (\ref{eq.pt1}),
\begin{equation}
\label{eq2.1234}
\BB_1(e_{U_n},e_{U_n})\le\EE_1(e_{U_n},\eta)=\int_{E_{0,T}}\tilde\eta\,d\mu_{U_n}=c_0(U_n).
\end{equation}
Since $\{U_n\}$ is a nonincreasing, $\{e_{U_n}\}$ is nonincreasing. By \cite[Proposition IV.3.4]{St2}, $e_{U_n}\searrow 0$ $m_1$-a.e. By (\ref{eq2.1234}),
\[
\BB_1(e_{U_n},e_{U_n})\le c_0(U_1).
\]
Therefore there exists a subsequence (still denoted by $n$) such that $\{e_{U_n}\}$ is weakly convergent in $\VV$.
Since $e_{U_n}\searrow 0$ $m_1$-a.e., in fact $e_{U_n}\rightarrow 0$ weakly in $\VV$. Thus $\EE_1(e_{U_n},\eta)\rightarrow 0$, $\eta\in\WW_0$. By (\ref{eq2.1234}),
\[
\EE_1(e_{U_n},\eta)\ge c_0(K).
\]
Hence $c_0(K)$=0. Since Cap$_\psi$ and  $c_0$ are Choquet capacities, this implies the desired result.
\end{proof}
Write
\[
\WW^2= \{u\in\WW:\partial_tu\in L^2(E_{0,T};m_1)\},\qquad {\mathcal C}=C_c(E_{0,T})\cap\WW^2,
\]
where $C_c(E_{0,T})$ is the set of all real continuous functions on $E_{0,T}$ having  compact support. For a  compact set $K\subset E_{0,T}$, we set
\[
\mbox{CAP}(K)=\inf\{\|u\|_\WW: u\in \mathcal C,\, u\ge \mathbf{1}_{K}\}.
\]
Next, for an open $U\subset E_{0,T}$, we set
$\mbox{CAP}(U)=\sup\{\mbox{CAP}(K):K\subset U,\, K\mbox{-compact}\}$, and finally, for arbitrary $A\subset E_{0,T}$, we set
$\mbox{CAP}(A)=\inf\{\mbox{CAP}(U):A\subset U,\, U\mbox{ open}\}$.
We will also consider the capacity $\mbox{c}_2$ (see \cite{Pierre3}) defined as
\[
\mbox{c}_2(U)=\inf\{\|u\|_\WW:u\in\WW,\, u\ge\mathbf{1}_{U}\, m_1\mbox{-a.e.}\}
\]
for an open set $U\subset E_{0,T}$, and then by
$\mbox{c}_2(A)=\inf\{\mbox{CAP}_1(U):A\subset U,\, U\mbox{  open}\}$ for an arbitrary set $A\subset E_{0,T}$.  By \cite[Proposition 2]{Pierre3}, $\mbox{c}_2$ is a Choquet capacity.

\begin{lemma}
\label{lem3.4}
For any $A\subset E_{0,T}$, $\mbox{\rm c}_2(A)=0$ if and only if $\mbox{\rm Cap}_\psi(A)=0$.
\end{lemma}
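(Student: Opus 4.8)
The plan is to reduce everything to the equivalences already in hand and then prove two one-sided comparisons. Since $\mbox{c}_2$ and $\mbox{Cap}_\psi$ are both Choquet capacities, it suffices to show they have the same null sets among Borel (indeed compact) sets, and the passage open $\to$ compact $\to$ arbitrary is then free. By Lemma \ref{equiv.1} it is enough to prove, for Borel $A\subset E_{0,T}$, that $\mbox{c}_2(A)=0$ iff $c_0(A)=0$. Thus the goal splits into (I) $\mbox{c}_2(A)=0\Rightarrow c_0(A)=0$ and (II) $c_0(A)=0\Rightarrow \mbox{c}_2(A)=0$, each obtained from a comparison on relatively compact open sets.

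For (I), I would first note that $G_1\psi\le\mathbf{1}$, which is immediate from the representation $G_1\psi(x)=E_x\int_0^\infty e^{-t}\psi(X_t)\,dt$ together with $\psi\le 1$. Consequently the $1$-reduced function satisfies $(G_1\psi)_U=E_\cdot[e^{-\sigma_U}G_1\psi(X_{\sigma_U})]\le E_\cdot[e^{-\sigma_U}]=:p_U$, the $1$-equilibrium potential of $U$. Given a near-optimal competitor $u\in\WW$ with $u\ge\mathbf{1}_U$ $m_1$-a.e.\ (so its quasi-continuous version satisfies $\tilde u\ge 1$ q.e.\ on $U$), I would dominate $p_U$ by $u$ via the minimality of the reduced function: $p_U$ is the smallest $1$-excessive function dominating $\mathbf{1}$ on $U$, and $\tilde u$ dominates $\mathbf{1}$ on $U$, so after passing to the $1$-excessive envelope of $u$ one gets the required comparison. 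Hence $\mbox{Cap}_\psi(U)=((G_1\psi)_U,\psi)_\HH\le(p_U,\psi)_\HH$ is controlled by a multiple of $\|u\|_\WW$; choosing $u$ nearly optimal for $\mbox{c}_2(U)$ yields $\mbox{Cap}_\psi(U)\le C\,\mbox{c}_2(U)$ and thus $\mbox{Cap}_\psi=0$ wherever $\mbox{c}_2=0$.

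For (II), I would start from a compact $K$ with $c_0(K)=0$, choose relatively compact open $U_n\supset K$ with $c_0(U_n)=\mu_{U_n}(E_{0,T})\to 0$, and use the equilibrium potentials $e_{U_n}=U_1\mu_{U_n}$, which satisfy $e_{U_n}\ge\mathbf{1}_{U_n}$ $m_1$-a.e.\ and (as in the proof of Lemma \ref{equiv.1}) $\BB_1(e_{U_n},e_{U_n})\le c_0(U_n)\to 0$, so that $\|e_{U_n}\|_\VV\to 0$ by coercivity. The obstruction is that $e_{U_n}\notin\WW$: the relation $\EE_1(e_{U_n},v)=\int\tilde v\,d\mu_{U_n}$ shows $\partial_t e_{U_n}$ lies only in $\WW_0'$, not in $\VV'$, because $\mu_{U_n}$ is a genuine measure. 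To manufacture admissible $\mbox{c}_2$-competitors I would return to the penalized approximations $e^k_{U_n}\in\WW_T$, for which $\partial_t e^k_{U_n}\in\VV'$, correct the obstacle defect $(e^k_{U_n}-\mathbf{1}_{U_n})^-\to 0$, and track the full $\WW$-norm uniformly so as to produce $u_n\in\WW$ with $u_n\ge\mathbf{1}_{U_n}$ $m_1$-a.e.\ and $\|u_n\|_\WW\to 0$, giving $\mbox{c}_2(K)=0$.

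The main obstacle is precisely this last construction, namely controlling the time-derivative in $\VV'$. The $c_0$-potential is intrinsically a $\VV$-object with a measure on the right-hand side and so carries no $\WW$-control, whereas the definition of $\mbox{c}_2$ demands bona fide $\WW$ test functions dominating $\mathbf{1}_U$. Bridging this gap — either through the penalized solutions together with Pierre's regularity results for parabolic potentials \cite{Pierre1,Pierre2,Pierre3}, or by directly analysing the $\mbox{c}_2$-variational inequality and comparing its reaction measure with $\mu_U$ — is where the real work lies; direction (I), by contrast, is comparatively routine once $G_1\psi\le\mathbf{1}$ and the minimality of the reduced function are available.
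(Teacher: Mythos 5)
Your reduction is exactly the paper's: both capacities are Choquet capacities, and by Lemma \ref{equiv.1} the statement is equivalent to ``$\mbox{c}_2(A)=0$ iff $c_0(A)=0$''. But at that point the paper simply cites \cite[Theorem 1]{Pierre3}, which is precisely this equivalence of $\mbox{c}_2$ and $c_0$; the entire proof in the paper is that one citation. You instead set out to reprove Pierre's theorem from scratch, and you do not finish it: you yourself flag that direction (II) ($c_0(K)=0\Rightarrow\mbox{c}_2(K)=0$) is ``where the real work lies'' and leave it as a programme. The obstacle you identify is genuine --- $e_{U_n}=U_1\mu_{U_n}$ satisfies $\EE_1(e_{U_n},v)=\int\tilde v\,d\mu_{U_n}$ for $v\in\WW_0$, so $\partial_te_{U_n}$ is only an element of $\WW_0'$ (a measure plus a $\VV'$-term), not of $\VV'$, and hence $e_{U_n}$ is not an admissible competitor for $\mbox{c}_2$. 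Producing $u_n\in\WW$ with $u_n\ge\mathbf{1}_{U_n}$ and $\|u_n\|_{\WW}\to0$ from the penalized approximants $e^k_{U_n}$ requires a uniform bound on $\|\partial_te^k_{U_n}\|_{\VV'}$, which the penalization does not give (the penalty term $n(e^n_U-\mathbf{1}_U)^-$ is bounded only in $L^1$-type norms, converging to the measure $\mu_U$). This is the hard content of Pierre's theorem, and as written your proof has a hole exactly there.

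Two smaller points. First, your direction (I) is also looser than it needs to be: the step ``after passing to the $1$-excessive envelope of $u$ one gets the required comparison'' is not justified, since you would still need to bound the pairing of that envelope against $\psi$ by $C\|u\|_{\WW}$. The paper's route avoids this entirely: Lemma \ref{equiv.1} already gives $\mbox{Cap}_\psi\le c_0$, so once the $c_0$--$\mbox{c}_2$ equivalence is in hand nothing further is needed in either direction. Second, since you are evidently aware of \cite{Pierre1,Pierre2,Pierre3}, the efficient fix is simply to invoke \cite[Theorem 1]{Pierre3} for the equivalence of $\mbox{c}_2$ and $c_0$ rather than reconstructing its proof; if you do want a self-contained argument, you must actually carry out the $\WW$-norm control of the penalized potentials, which is a substantial piece of Pierre's analysis and not a routine completion.
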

\begin{proof}
Follows from Lemma \ref{equiv.1} because by \cite[Theorem 1]{Pierre3}, for any $A\subset E_{0,T}$,  $\mbox{c}_2(A)=0$ if and only if $c_0(A)=0$.
\end{proof}

\begin{lemma}
\label{lm.eqv}
For any $A\subset E_{0,T}$, $\mbox{\rm CAP}(A)=0$ if and only if $\mbox{\rm Cap}_\psi(A)=0$.
\end{lemma}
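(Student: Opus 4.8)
The plan is to show that $\mbox{CAP}$ and $\mbox{c}_2$ have exactly the same null sets; combined with Lemma \ref{lem3.4} this gives the assertion. First I would dispose of the easy inequality $\mbox{c}_2\le\mbox{CAP}$, which already yields $\mbox{CAP}(A)=0\Rightarrow\mbox{c}_2(A)=0\Rightarrow\mbox{Cap}_\psi(A)=0$. Indeed, for a compact $K$ and any $u\in\mathcal C$ with $u\ge\mathbf{1}_K$, continuity of $u$ makes $U_\varepsilon:=\{u>1-\varepsilon\}$ open with $K\subset U_\varepsilon$ and $(1-\varepsilon)^{-1}u\ge\mathbf{1}_{U_\varepsilon}$ $m_1$-a.e.; hence $\mbox{c}_2(K)\le\mbox{c}_2(U_\varepsilon)\le(1-\varepsilon)^{-1}\|u\|_{\WW}$, and letting $\varepsilon\downarrow0$ and taking the infimum over $u$ gives $\mbox{c}_2(K)\le\mbox{CAP}(K)$. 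Since $\mbox{c}_2$ is a Choquet capacity it is inner regular on open sets, so $\mbox{c}_2(U)=\sup_{K\subset U}\mbox{c}_2(K)\le\mbox{CAP}(U)$ for open $U$, and then $\mbox{c}_2\le\mbox{CAP}$ on arbitrary sets by the outer-regular definitions.

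For the converse I must show $\mbox{c}_2(A)=0\Rightarrow\mbox{CAP}(A)=0$. Since both capacities are Choquet and hence determined on compact sets, it suffices to treat a compact $K$ with $\mbox{c}_2(K)=0$; I fix once and for all a relatively compact open neighbourhood $U_0$ of $K$. For each $\eta>0$, outer regularity supplies an open $U$ with $K\subset U\subset U_0$ and $\mbox{c}_2(U)<\eta$, together with $u\in\WW$ such that $u\ge\mathbf{1}_U$ $m_1$-a.e.\ and $\|u\|_{\WW}<\eta$. Everything then reduces to manufacturing from $u$ a test function $w\in\mathcal C$ with $w\ge\mathbf{1}_K$ and $\|w\|_{\WW}\le C\eta$, where $C=C(K,U_0)$ is independent of $\eta$; this forces $\mbox{CAP}(K)=0$.

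I would build $w$ from $u$ in four steps, each preserving the bound $w\ge1$ on a neighbourhood of $K$ and enlarging $\|\cdot\|_{\WW}$ only by a factor depending on the fixed cut-offs (not on $\eta$): (i) replace $u$ by $0\vee u\wedge1$, using that the unit contraction operates on $\WW$; (ii) multiply by a time cut-off $\chi\in C^\infty_c((0,T))$ with $\chi\equiv1$ on the time projection of $U_0$, which makes the support compact in time and costs $\|\partial_t(\chi u)\|_{\VV'}\le\|\chi'\|_\infty\|u\|_{\VV}+\|\chi\|_\infty\|\partial_t u\|_{\VV'}$; (iii) mollify in time, $w_\delta(t)=\int_{\BR}\rho_\delta(t-s)u(s)\,ds$, so that $\|w_\delta\|_{\WW}\le\|\chi u\|_{\WW}$ (convolution with a probability density commutes with $\partial_t$ and contracts the $\VV$- and $\VV'$-norms) while $\partial_t w_\delta=\int_{\BR}\rho'_\delta(t-s)u(s)\,ds\in L^2(\BR;V)\subset L^2(E_{0,T};m_1)$, whence $w_\delta\in\WW^2$, and for $\delta$ smaller than the time-margin of $U$ a short computation gives $w_\delta\ge1$ $m_1$-a.e.\ on a neighbourhood of $K$ (here $\delta$ may shrink with $\eta$, but the norm bound from (iii) does not); (iv) regularize in space to land in $C_c(E_{0,T})$, using that the reference form $(B,V)$ is regular, so that $V\cap C_c(E)$ is dense in $V$, and multiplying by a spatial cut-off $\phi\in V\cap C_c(E)$ equal to $1$ on the spatial projection of $U_0$. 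Combining (i)--(iv) and observing that a continuous function which is $\ge1$ $m_1$-a.e.\ on an open set is $\ge1$ everywhere there, one obtains $w\in\mathcal C$ with $w\ge\mathbf{1}_K$ and $\|w\|_{\WW}\le C\eta$.

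The genuinely delicate point is step (iv): converting the time-regular $w_\delta\in\WW^2$ into a \emph{jointly} continuous, compactly supported member of $\mathcal C$ without destroying the lower bound on $K$ nor the control of $\|\cdot\|_{\WW}$. This is subtle in the nonlocal case, because multiplication by a spatial cut-off and the spatial smoothing need not be visibly bounded on $V$; I expect to handle it by exploiting the regularity of $(B,V)$ together with the comparability assumption~(c), which makes the relevant estimates uniform in $t$. It is worth stressing that $\mbox{c}_2$ and $\mbox{CAP}$ depend on the data only through the Gelfand triple $V\subset H\subset V'$ and not on the non-symmetric, time-dependent part of $B^{(t)}$; hence this step is exactly of the type carried out in the abstract parabolic framework of Pierre \cite{Pierre3}, whose results can be invoked to finish the argument.
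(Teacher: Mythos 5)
Your first half (the inequality $\mbox{c}_2\le\mbox{CAP}$, hence $\mbox{CAP}$-null $\Rightarrow$ $\mbox{Cap}_\psi$-null via Lemma \ref{lem3.4}) is correct and is essentially the paper's argument. The gap is in the converse, concentrated exactly where you flag it: step (iv). The obstruction is that you need a function in $\mathcal C=C_c(E_{0,T})\cap\WW^2$ that simultaneously (a) dominates $\mathbf{1}_K$ \emph{pointwise} and (b) has $\WW$-norm of order $\eta$. Regularity of $(B,V)$ only gives density of $V\cap C_c(E)$ in $V$ (and of $\mathcal C$ in $\WW^2$) \emph{in norm}, and norm approximation does not preserve the pointwise constraint $u\ge\mathbf{1}_K$; there is no convolution or spatial smoothing available on an abstract locally compact space $E$, and Pierre's results in \cite{Pierre3} compare $c_0,c_1,c_2$ only --- they say nothing about the capacity $\mbox{CAP}$ built from \emph{continuous} test functions, so they cannot be "invoked to finish." Moreover, your concrete device of multiplying by a spatial cut-off $\phi\in V\cap C_c(E)$ with $\phi\equiv1$ near $K$ destroys the smallness: after the truncation in step (i) one has $\|u\|_\infty=1$ on a neighbourhood of $K$, and the standard product estimate for regular Dirichlet forms gives $\|\phi u\|_V\lesssim\|u\|_\infty\|\phi\|_V+\|\phi\|_\infty\|u\|_V$, whose first term is a fixed constant independent of $\eta$. (Step (i) is also delicate on its own: the unit contraction is not known to operate boundedly on $\WW$ when $\partial_tu$ is merely in $\VV'$.)

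The paper's proof supplies the missing idea. After the one-sided time mollification (your step (iii), which the paper also performs, yielding $J_n(\eta^\varepsilon_n)\in\WW^2$ with $J_n(\eta^\varepsilon_n)\ge\mathbf{1}$ on a slightly smaller neighbourhood of $K$ and no increase of the $\WW$-norm), one approximates $J_n(\eta^\varepsilon_n)$ in $\WW^2$-norm by $\eta^{\varepsilon,a}_n\in\mathcal C$ and then \emph{adds back} the defect: with a fixed $\xi_n\in\mathcal C$, $\xi_n=1$ on $K$, supported in that neighbourhood, one sets $\bar\eta^{\varepsilon,a}_n=\eta^{\varepsilon,a}_n+(\xi_n-\eta^{\varepsilon,a}_n)^+\in\mathcal C$, which dominates $\mathbf{1}_K$ by construction. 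The correction term converges weakly in $\WW^2$ to $(\xi_n-J_n(\eta^\varepsilon_n))^+=0$ as $a\downarrow0$, and a Banach--Saks/Ces\`aro-mean argument upgrades this to strong convergence, so the correction contributes nothing to the norm in the limit and $\mbox{CAP}(K)\le\|\eta^\varepsilon_n\|_{\WW}\le b_n+\varepsilon$. Without this additive-correction-plus-Ces\`aro device (or a genuine substitute for it), your outline does not close.
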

\begin{proof}
By virtue of Lemma \ref{lem3.4}, it suffices to prove that
$\mbox{CAP}$ is equivalent to $\mbox{c}_2$. Let $K$ be a compact subset of $E_{0,T}$ and let $\varepsilon>0$, $a>1$.
Choose $\eta_\varepsilon\in\mathcal C$  such that $\|\eta_\varepsilon\|_\WW\le \mbox{CAP}(K)+\varepsilon$ and $\eta_\varepsilon\ge\mathbf{1}_{K}$. Since $\eta_\varepsilon$ is continuous, there exists an open set $V_a$ such that $a\eta_\varepsilon\ge \mathbf{1}_{V_a}$ and $K\subset V_a$. We have
\[
\mbox{c}_2(K)\le\mbox{c}_2(V_a)\le \|a\eta_\varepsilon\|_\WW\le a(\mbox{CAP}(K)+\varepsilon).
\]
Letting $\varepsilon\downarrow 0$ and then $a\downarrow 1$ we see  that $\mbox{c}_2(K)\le\mbox{CAP}(K)$
for every compact $K\subset E_{0,T}$. From this and the fact that $\mbox{c}_2$ is a Choquet capacity   we conclude that
$\mbox{c}_2\le\mbox{CAP}$.

Now suppose that $\mbox{c}_2(K)=0$ for some compact set $K\subset E_{0,T}$. Let $U_n=\{(s,x)\in E_{0,T}:\mbox{dist}((s,x),K)<n^{-1}\}$. Since $E_{0,T}$ is locally compact and $K$ is compact, we may assume that  $U_n$ are relatively compact. Since $\mbox{c}_2$ is a Choquet capacity, $b_n:=\mbox{c}_2(U_n)\searrow 0$. Fix $\varepsilon>0$ and choose  $\eta^\varepsilon_n\in\WW$ such that $\eta^\varepsilon_n\ge \mathbf{1}_{U_n}$ $m_1$-a.e. and $\|\eta^\varepsilon_n\|_\WW\le b_n+\varepsilon$. By putting $\eta^\varepsilon_n(t)=\eta^\varepsilon_n(T)$ for $t\ge T$ and
$\eta^\varepsilon_n(t)=\eta^\varepsilon_n(0)$ for $t\le 0$, we may assume that
$\eta^\varepsilon_n\in \WW^1$. Write
\[
J_n(\eta^\varepsilon_n)=2n\int_0^{1/(2n)}\eta^\varepsilon_n(t+s)j(2ns)\,ds,
\]
where $j$ is a smooth positive function with support in $[0,1]$ such that $\int_\BR j(t)\,dt=1$. It is clear that
$J_n(\eta^\varepsilon_n)\in \WW^2$, $\|J_n(\eta^\varepsilon_n)\|_\WW\le \|\eta^\varepsilon_n\|_\WW$ and
\begin{equation}
\label{eq.cl1}
J_n(\eta^\varepsilon_n)\ge \mathbf{1}_{U_{n-1}}\quad m_1\mbox{-a.e.}
\end{equation}
By regularity of the forms $B^{(t)}$, for every $a>0$ there exists $\eta^{\varepsilon,a}_n\in \mathcal C$  such that
\begin{equation}
\label{eq.cl2}
\|J_n(\eta^\varepsilon_n)-\eta^{\varepsilon,a}_n\|_{\WW^2}\le a
\end{equation}
(see \cite[Lemma 1.1]{O3}). Let $\xi_n\in\mathcal C$ be a positive function such that $\xi_n=1$ on $K$ and $\mbox{supp}[\xi_n]\subset U_{n-1}$. The existence of $\xi_n$
follows from \cite[Lemma 1.1]{O3}. We put
\[
\bar\eta^{\varepsilon,a}_n=\eta^{\varepsilon,a}_n+(\xi_n-\eta^{\varepsilon,a}_n)^+.
\]
Observe that $\bar\eta^{\varepsilon,a}_n\in\mathcal C$ and $\bar\eta^{\varepsilon,a}_n\ge\mathbf{1}_K$.
Let $a_l\downarrow 0$. By (\ref{eq.cl2}), up to a subsequence, $(\xi_n-\eta^{\varepsilon,a_l}_n)^+\rightarrow (\xi_n-J_n(\eta^\varepsilon_n))^+$ weakly in $\WW^2$ as $l\rightarrow\infty$. Hence, up to a subsequence,
\begin{equation}
\label{eq.cl3}
\frac1{k}\sum_{l=1}^k(\xi_n-\eta^{\varepsilon,a_l}_n)^+\rightarrow (\xi_n-J_n(\eta^\varepsilon_n))^+
\end{equation}
strongly in $\WW^2$. Furthermore,
\[
\mbox{CAP}(K)\le \|\frac1{k}\sum_{l=1}^k\bar\eta^{\varepsilon,a_l}_n\|_\WW\le \|\frac1{k}\sum_{l=1}^k\eta^{\varepsilon,a_l}_n\|_\WW
+\|\frac1{k}\sum_{l=1}^k(\xi_n-\eta^{\varepsilon,a_l}_n)^+\|_{\WW^2}.
\]
By (\ref{eq.cl1}), $(\xi_n-J_n(\eta^\varepsilon_n))^+=0$, so letting $k\rightarrow\infty$
in the above inequality and using (\ref{eq.cl2}) and  (\ref{eq.cl3}) we get
\[
\mbox{CAP}(K)\le  \|J_n(\eta^\varepsilon_n)\|_\WW\le \|\eta^\varepsilon_n\|_\WW\le b_n+\varepsilon.
\]
Hence $\mbox{CAP}(K)=0$. Since $\mbox{c}_2$ is a Choquet capacity, this implies that  $\mbox{CAP}(A)=0 $ if $\mbox{c}_2(A)=0$.
\end{proof}

\section{Smooth measures and associated additive functionals}
\label{sec4}

The results of this section will be needed in Sections \ref{sec7} and \ref{sec8}.
Recall that a positive additive functional (AF in abbreviation) $A^1$ of $\BX^1$  is said to be in the Revuz correspondence with a positive $\mu\in S^1(E^1)$ if
\begin{equation}
\label{rd.1}
\lim_{\alpha\rightarrow\infty}\alpha
E^1_{m_1}\int^{\infty}_0e^{-\alpha t}
f(X^1_t)\,dA^1_t=\int_{E^1}f(x)\,\mu(dx),\quad f\in\BB_b^+(E^1),
\end{equation}
where $E^1_{m_1}$  denotes the expectation with respect to the measure
$P^1_{m_1}$ defined as $P^1_{m_1}
(\cdot)=\int_{E^1}P_x(\cdot)\,m_1(dx)$. Similarly, we say that a positive AF $A$
of $\BX$ is in the Revuz correspondence with a positive $\mu\in S(E_{0,T})$ if
\begin{equation}
\label{rd.2}
\lim_{\alpha\rightarrow\infty}\alpha
E_{m_1}\int^{\infty}_0e^{-\alpha t}
f(X_t)\,dA_t=\int_{E_{0,T}}f(x)\,\mu(dx),\quad f\in\BB_b^+(E_{0,T}),
\end{equation}
where  $E_{m_1}$ denotes the expectation with respect to
$P_{m_1}(\cdot)=\int_{E_{0,T}}P_x(\cdot)\,m_1(dx)$.
%(see \cite{O2} or ? for more details).

Let $A^f_t=\int_0^t f(X_r)\,dA_r$. We have
\begin{equation}
\label{eq7.5ccc}
\lim_{\alpha\rightarrow\infty}\alpha^2
E_{m_1}\int^{\infty}_0e^{-\alpha t} A^f_t\,dt=\lim_{\alpha\rightarrow\infty}
\int^{\infty}_0se^{-s}\frac{\alpha}{s}E_{m_1}A^f_{s/\alpha}\,ds.
\end{equation}
Since $t\mapsto E_{m_1}A^f_t$ is subadditive, $(1/t)E_{m_1}A^f_t$ increases as $t$ decreases, and  moreover, $\lim_{t\downarrow0}(1/t)E_{m_1}A^f_t=\sup_{t>0}(1/t)E_mA^f_t$. Therefore, letting $\alpha\rightarrow\infty$ in (\ref{eq7.5ccc}), shows that
\begin{align*}
\lim_{\alpha\rightarrow\infty}\alpha
E_{m_1}\int^{\infty}_0e^{-\alpha t}\,dA^f_t&=\lim_{\alpha\rightarrow\infty}\alpha^2
E_{m_1}\int^{\infty}_0e^{-\alpha t}A^f_t\,dt\\
&=\sup_{t>0}\frac{1}{t}E_{m_1}A^f_t
\cdot\int^{\infty}_0se^{-s}\,ds= \sup_{t>0}\frac{1}{t}E_{m_1}A^f_t.
\end{align*}
A similar  result holds for $A^1$.

It is known (see \cite[Section 2]{K:JFA}) that for every $\mu\in
S^1(E^1)$ there exists a unique positive  natural AF $A^1$ of $\BX^1$
(i.e. a positive AF of $\BX^1$ such that $A^1$ and $\BX^1$ have no common
discontinuities) such that $A^1$ is in the Revuz correspondence with
$\mu$. In what follows we  denote it by $A^{1,\mu}$. In fact,
$A^{1,\mu}$ is a predictable process (see \cite[Theorem 5.3]{GS}). In the proposition  below, we show a similar result for positive smooth measures on $E_{0,T}$.

\begin{proposition}
\label{prop4.1}
Let $\mu\in S(E_{0,T})$ be positive.
There exists a unique positive natural AF $A^\mu$ of $\BX$ in the Revuz correspondence
with $\mu$. Moreover, for q.e. $x\in E_{0,T}$,
\begin{equation}
\label{eq2.rede}
E_x\int^{\infty}_0e^{-\alpha t}
f(X_t)\,dA^{\mu}_t=E^1_x\int^{\infty}_0e^{-\alpha t}
f(X^1_t)\,dA^{1,\bar\mu}_t,\quad \alpha\ge 0,\quad f\in\BB^+_b(E_{0,T}),
\end{equation}
where $\bar\mu$ denotes the extension of $\mu$ to $E^1$ such that $\mu(E^1\setminus E_{0,T})=0$ and $A^{1,\bar\mu}$ is the positive natural AF of $\BX^1$ in the Revuz correspondence with $\bar\mu$.
\end{proposition}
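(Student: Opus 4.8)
The plan is to deduce everything from the known theory on the space-time process $\BX^1$ and to transport it down to $\BX$ through the projection $\Pi$, relying on the identities $P_x=P^1_x\circ\Pi^{-1}$ in (\ref{eq2.cpr}) and $\zeta=\zeta\wedge(T-\upsilon(0))$ in (\ref{eq8.c14}). First I would check that the trivial extension $\bar\mu$ lies in $S^1(E^1)$. That $\bar\mu$ charges no $\EE^1$-exceptional set is immediate from Lemma \ref{lm.eqc12}: if $\mbox{Cap}^1(B)=0$ then $\mbox{Cap}_\psi(B\cap E_{0,T})=0$, and $\bar\mu$ is concentrated on $E_{0,T}$, where it agrees with the smooth measure $\mu$. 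For the quasi-finiteness one transfers the compact $\EE$-nest $\{K_n\}$ with $|\mu|(K_n)<\infty$ (furnished by the definition of $S(E_{0,T})$) to a generalized $\EE^1$-nest, again by means of Lemma \ref{lm.eqc12} together with the Choquet property of $\mbox{Cap}^1$; since $|\bar\mu|(K_n)=|\mu|(K_n)<\infty$ this gives $\bar\mu\in S^1(E^1)$, and hence the unique positive natural AF $A^{1,\bar\mu}$ of $\BX^1$ in the Revuz correspondence with $\bar\mu$ is available.

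Next I would construct $A^\mu$. Since $\Omega\subset\Omega_1$, $X=X^1$ on $\Omega$, and $P_x=P^1_x\circ\Pi^{-1}$, I set $A^\mu_t:=A^{1,\bar\mu}_t$ for $t<\zeta$; this is well defined because, for $t<\zeta=\zeta\wedge(T-\upsilon(0))$, the path $\Pi(\omega)$ agrees with $\omega$ on $[0,t]$ while $A^{1,\bar\mu}_t$ is $\FF^1_t$-measurable, and additivity, adaptedness and right-continuity are inherited. The decisive structural fact is that $\bar\mu$ lives on $E_{0,T}=\{0<\upsilon<T\}$, so by the Revuz correspondence $A^{1,\bar\mu}$ increases only on $\{t:X^1_t\in E_{0,T}\}$ and in particular does not grow after time $T-\upsilon(0)$. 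Pushing forward through $\Pi$ and using that every $f\in\BB^+_b(E_{0,T})$ vanishes off $E_{0,T}$, this yields (\ref{eq2.rede}) for q.e. $x$ directly, the truncation at $T-\upsilon(0)$ being automatic.

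The main work is to derive the Revuz correspondence (\ref{rd.2}) for $A^\mu$ from (\ref{rd.1}) for $A^{1,\bar\mu}$. By the monotone-limit formula established in (\ref{eq7.5ccc}) and the lines following it, it suffices to show $\frac1t E_{m_1}A^{\mu,f}_t\to\int f\,d\mu$, where $A^{\mu,f}_t=\int_0^t f(X_r)\,dA^\mu_r$; since the analogous expression for $A^{1,\bar\mu}$, with $A^{1,f}_t=\int_0^t f(X^1_r)\,dA^{1,\bar\mu}_r$, tends to $\int f\,d\bar\mu=\int f\,d\mu$, I must prove that $\frac1t\big(E^1_{m_1}A^{1,f}_t-E_{m_1}A^{\mu,f}_t\big)\to0$. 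Writing both expectations as $s$-integrals of $E^1_{(s,y)}[A^{1,f}_t]$ over $\BR\times E$ and over $(0,T)\times E$ respectively, the difference reduces to the slab $s\in(-t,0]$ (the part $s\ge T$ vanishes since there $A^{1,\bar\mu}$ does not increase). On this slab I would apply the Markov property at the entrance time $-s$ into $\{\upsilon=0\}$, use that on $\{\,\cdot\le0\}$ one has $B^{(t)}=B^{(0)}=B$ so that the spatial motion is governed by the time-homogeneous $B$-semigroup, for which $m$ is sub-invariant, and finally invoke continuity from above of $\mu$, in order to bound the difference by $\|f\|_\infty\,\mu((0,t)\times E)\to0$. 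This near-boundary estimate is the step I expect to be the main obstacle.

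Finally, naturalness of $A^\mu$ transfers from $A^{1,\bar\mu}$: on $\Omega$ the jumps of $A^\mu$ coincide with those of $A^{1,\bar\mu}$ up to $\zeta$ and the jumps of $X$ with those of $X^1$, so no common discontinuity of $A^\mu$ and $\BX$ can arise before the lifetime. Uniqueness follows from the uniqueness in the Revuz correspondence for positive natural additive functionals; alternatively, any competitor lifts, via the same projection, to a natural AF of $\BX^1$ in the Revuz correspondence with $\bar\mu$, which by the cited uniqueness on $E^1$ must equal $A^{1,\bar\mu}$, whence it coincides with $A^\mu$.
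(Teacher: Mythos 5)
Your overall strategy (transport everything from $\BX^1$ to $\BX$ through $\Pi$) is in the spirit of the paper, but the step you carry it out with is different from the paper's and it contains a genuine gap. The paper never verifies the Revuz limit (\ref{rd.2}) by a direct near-boundary estimate. It first reduces to $\mu\in S_0(E_{0,T})$, identifies the potential $R^1\bar\mu=E^1_\cdot A^{1,\bar\mu}_\zeta$ with $U\mu$ on $E_{0,T}$ via the duality (\ref{eq2.26}) and \cite[Theorem 9.3]{GS}, checks that $R^1\bar\mu$ is a natural potential of $\BX$, and invokes \cite[Theorem IV.4.22]{BG} to obtain (and get uniqueness of) the natural AF $A$ with $E_\cdot A_\zeta=R^1\bar\mu$; the Revuz identity then follows from $\lim_{t\downarrow0}\frac1tE_{\eta\cdot m_1}A_t=\EE(U\mu,\eta)=\int\eta\,d\mu$. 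The general case is handled by an increasing limit $A^\mu=\sup_nA^{\mu_n}$ along a nest $\{F_n\}$ with $\mathbf{1}_{F_n}\cdot\mu\in S_0$, with naturalness of the limit supplied by \cite[Lemma 1, page 182]{M}.

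The concrete problem with your argument is the ``slab'' estimate. You bound the discrepancy $\frac1t\bigl(E^1_{m_1}A^{1,f}_t-E_{m_1}A^{\mu,f}_t\bigr)$ by $\|f\|_\infty\,\mu((0,t)\times E)$ and let $t\downarrow0$. But an arbitrary positive $\mu\in S(E_{0,T})$ is only required to be finite on an $\EE$-nest of compacts; it can satisfy $\mu((0,t)\times E)=+\infty$ for every $t>0$, so the bound is vacuous. Since you never reduce to $S_0(E_{0,T})$ (or to bounded $\mu$) via a nest, the step on which the whole Revuz verification rests fails in exactly the generality the proposition claims. Moreover, even when $\mu$ is finite, the inequality you assert amounts to controlling $\int_E E^1_{(0,z)}\bigl[A^{1,f}_t\bigr]\,m(dz)$, i.e.\ the potential of $f\cdot\bar\mu|_{(0,t)\times E}$ integrated over the entrance slice $\{0\}\times E$, by the total mass $\int_{(0,t)\times E}f\,d\mu$; this requires a duality argument against a measure singular with respect to $m_1$ and is precisely the content you leave unproved. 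A secondary but real issue is the construction of $A^\mu$ by restriction: declaring $A^\mu_t:=A^{1,\bar\mu}_t$ for $t<\zeta$ leaves open whether the defining and exceptional sets of $A^{1,\bar\mu}$ survive the pushforward by $\Pi$, whether the restriction is adapted to the completed filtration of $\BX$, and whether naturalness (absence of common discontinuities with $\BX$, not $\BX^1$) persists; the paper's route through natural potentials and the Blumenthal--Getoor representation theorem is designed to avoid exactly these verifications, and also delivers the uniqueness that you only assert.
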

\begin{proof}
We first assume that $\mu\in S_0(E_{0,T})$. Since $\mu$ is zero outside $E_{0,T}$, by  Lemma \ref{lm.eqc12} it is clear that $\bar\mu\in S_0(E^1)$.
By the remark preceding Proposition \ref{prop4.1}, there  exists a positive natural AF $A^{1,\bar\mu}$ of $\BX^1$ in the Revuz correspondence with $\bar\mu$. Let $R^1\bar\mu(x)=E^1_xA^{1,\bar\mu}_\zeta$. By (\ref{rd.1}),
\begin{equation}
\label{rd.3}
\mathbf{1}_{E_{0,T}}R^1\bar\mu=R^1\bar\mu
\end{equation}
for $x\in E_{0,T}$. Hence, by \cite[Theorem 9.3]{GS} and (\ref{eq8.c17})
(for the dual process), for every positive $\eta\in\BB(E^1)\cap L^2(E^1;m_1)$ such that $\eta=0$ outside $E_{0,T}$ we have
\begin{align*}
\int_{E^1}R^1\bar\mu\,\eta\,dm_1=\int_{E^1}\hat R^1\eta\,d\bar\mu
&=\int_{E_{0,T}}\hat R^1\eta\,d\bar\mu\\
&=\int_{E_{0,T}}\hat R\eta\,d\mu
=\EE(U\mu,\hat R\eta)=\int_{E_{0,T}} U\mu\,\eta\,dm_1.
\end{align*}
Thus $R^1\bar\mu=U\mu$ $m_1$-a.e. on $E_{0,T}$. By (\ref{eq2.cpr}) and (\ref{rd.3}),
for all $\tau\le\zeta$ and $x\in E_{0,T}$,
\[
E_x(R^1\bar\mu)(X_\tau)=E^1_x(R^1\bar\mu)(X^1_\tau\circ\Pi)=E^1_x(R^1\bar\mu)(X^1_\tau)\le E^1_x(A^{1,\bar\mu}_\zeta-A^{1,\bar\mu}_\tau).
\]
Hence $R^1\bar\mu$ is a natural potential with respect to $\mathbb X$.
By \cite[Theorem IV.4.22]{BG}, there exists a unique positive natural AF $A$ of $\mathbb X$ such that
\begin{equation}
\label{eq8.c18}
R^1\bar\mu=E_\cdot A_\zeta\quad \mbox{q.e. on } E_{0,T}.
\end{equation}
From this and the fact that $R^1\bar\mu=U\mu$ $m_1$-a.e. on $E_{0,T}$ we get
\[
\lim_{t\rightarrow 0}\frac1tE_{\eta\cdot m_1}A_t=(R^1\bar\mu,\frac1t(\eta-T_t\eta))_\HH=(U\mu,\frac1t(\eta-T_t\eta))_\HH=
\EE(U\mu,\eta)=\int_{E_{0,T}}\eta\,d\mu
\]
for every $\eta\in\WW_T\cap C_b(E_{0,T})$. Thus $\mu$ is a Revuz measure of $A$.
%By \cite[Theorem 9.3]{GS} $\mu$ is the Revuz measure of $A$.
Let $A^\mu:=A$. From (\ref{eq8.c18}) we easily get  (\ref{eq2.rede}).

To prove the existence of $A^{\mu}$ in the general case, we choose
an $\EE$-nest $\{F_n\}$ of compact subsets of $E_{0,T}$ such that
$\mu_n=\mathbf{1}_{F_n}\cdot\mu\in S_0(E_{0,T})$. Such a nest exists by \cite[Theorem 4.7]{T2}. By what has already been proved,  for all $\alpha\ge 0$ and $f\in\BB^+_b(E_{0,T})$,
\begin{equation}
\label{eq2.redesob}
E_x\int^{\infty}_0e^{-\alpha t}f(X_t)\,dA^{\mu_n}_t =E^1_x\int^{\infty}_0e^{-\alpha t}
f(X^1_t)\,dA^{1,\bar\mu_n}_t
\end{equation}
for q.e. $x\in E_{0,T}$.
Since the additive functional in the Revuz correspondence with a smooth measure is uniquely determined,  we have $dA^{\mu_n}\le dA^{\mu_m}$ for $n\le m$ and
$A^{1,\bar\mu_n}_t=\int_0^t\mathbf{1}_{F_n}(X_r)\,dA^{1,\bar\mu}_r$, $t\ge 0$, $P^1_x$-a.s. We now set
\begin{equation}
\label{eq4.5}
A^{\mu}_t=\sup_{n\ge 1}A^{\mu_n}_t,\quad t\ge 0.
\end{equation}
By (\ref{eq2.redesob}),
\begin{align*}
\lim_{\alpha\rightarrow\infty}\alpha
E_{m_1}\int^{\infty}_0e^{-\alpha t} f(X_t)\,dA^\mu_t
&=\lim_{n\rightarrow \infty}\lim_{\alpha\rightarrow\infty}\alpha E^1_{m_1}\int^{\infty}_0e^{-\alpha t}\mathbf{1}_{F_n}
f(X^1_t)\,dA^{1,\bar\mu}_t\nonumber\\
&=\int_{\bigcup_{n\ge 1}F_n}f\,d\bar\mu=\int_{E_{0,T}}f\,d\mu,
\end{align*}
so $A^{\mu}$ defined by (\ref{eq4.5}) is in the Revuz correspondence with $\mu$. By \cite[Lemma 1, page 182]{M},
$A^\mu$ is a positive natural AF of $\mathbb X$. By (\ref{eq2.cpr}) and \cite[Remark IV.3.6]{St2}, for $m_1$-a.e. $x\in E_{0,T}$,
\begin{align*}
0&=P_x(\lim_{n\rightarrow \infty}\sigma_{E_{0,T}\setminus F_n}<\zeta)\\
&=P_x\Big(\omega\in\Omega: \lim_{n\rightarrow \infty}\inf_{t>0}\{X_t(\omega)\in E_{0,T}\setminus F_n\}<\zeta(\omega)\Big)\\
&=P^1_x\circ\Pi^{-1}\Big(\omega\in\Omega: \lim_{n\rightarrow \infty}\inf_{t>0}\{X_t(\omega)\in E_{0,T}\setminus F_n\}<\zeta(\omega)\Big)\\
&=P^1_x\Big(\omega\in\Omega^1:\lim_{n\rightarrow \infty}\inf_{t>0}\{X^1_t(\Pi(\omega))\in E_{0,T}\setminus F_n\}<\zeta(\Pi(\omega))\Big)\\
&=P^1_x\Big(\omega\in\Omega^1:\lim_{n\rightarrow \infty}\inf_{t>0}\{X^1_t(\omega)\in E_{0,T}\setminus F_n\}<\zeta(\omega)\Big).
\end{align*}
Therefore, letting $n\rightarrow\infty$ in (\ref{eq2.redesob}), yields (\ref{eq2.rede}).
\end{proof}
%Conversely,
%if $A$ is a positive natural AF of $\BX$, then by
%Proposition in Section II.1 of \cite{R} and \cite[Theorem 5.6]{O2},
%there exists a smooth measure $\mu$ on $E^1$ such that $A$ is in
%the Revuz correspondence with $\mu$.
\begin{remark}
Since the capacities $\mbox{Cap}_{\psi}$ associated with $\EE$ and the dual form $\hat \EE$ coincide (see Section \ref{sec2.1}), the set  of smooth measures $S(E_{0,T})$ associated with $\EE$ coincides with the set of smooth measures on $E_{0,T}$ associated with the dual form $\hat\EE$. Therefore from Proposition \ref{prop4.1} applied to $\hat\EE$ and $\hat\BX$ it follows that for a positive $\mu\in S(E_{0,T})$ there exists a unique predictable AF of $\hat\BX$ in the Revuz correspondence with $\mu$. We will denote it by $\hat A^{\mu}$.
\end{remark}

In what follows, for a  positive smooth measure $\mu$ on $E_{0,T}$, we put
\[
R_\alpha\mu(x)=E_x\int_0^{\zeta}e^{-\alpha t}\,dA^\mu_t,
\qquad\hat R_\alpha\mu(x)=\hat E_x\int_0^{\zeta}e^{-\alpha t}\,d\hat A^\mu_t,\quad x\in E_{0,T}.
\]
By \cite[Theorem 9.3]{GS}, for every $\mu\in\hat S_0(E_{0,T})$ we have
\begin{equation}
\label{eq.sob7}
\hat U_\alpha\mu(x)=\hat R_{\alpha}\mu(x)
\end{equation}
for   $m_1$-a.e. $x\in E_{0,T}$, and for every $\mu\in S_0(E_{0,T})$,
\begin{equation}
\label{eq4.8}
U_\alpha\mu(x)=R_{\alpha}\mu(x)
\end{equation}
for   $m_1$-a.e. $x\in E_{0,T}$.

\begin{lemma}
\label{lem4.2}
Let $\alpha\ge0$. For any positive  $\mu\in S(E_{0,T})$ and $\eta\in\BB^+(E_{0,T})$,
\begin{equation}
\label{eq2.2}
\int_{E_{0,T}}R_\alpha\eta\,d\mu=\int_{E_{0,T}}\eta\hat R_\alpha\mu\,dm_1,\qquad
\int_{E_{0,T}}\hat R_\alpha\eta\,d\mu=\int_{E_{0,T}}\eta R_\alpha\mu\,dm_1.
\end{equation}
\end{lemma}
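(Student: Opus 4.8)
The plan is to prove the two duality identities in (\ref{eq2.2}) by reducing everything to the analogous, already-known duality for the additive functionals $A^{1,\bar\mu}$ and $\hat A^{1,\bar\mu}$ on the space $E^1$, where the symmetry/duality machinery of \cite{GS} (in particular \cite[Theorem 9.3]{GS}) is directly available. The key link is the identity (\ref{eq2.rede}) established in Proposition \ref{prop4.1}, together with its dual counterpart from the Remark, which express the resolvent of $A^\mu$ (resp. $\hat A^\mu$) with respect to $\BX$ (resp. $\hat\BX$) in terms of the resolvent of $A^{1,\bar\mu}$ with respect to $\BX^1$ (resp. $\hat A^{1,\bar\mu}$ with respect to $\hat\BX^1$). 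Thus the first step is to rewrite $R_\alpha\mu$ and $\hat R_\alpha\mu$ via these formulas, so that the left- and right-hand sides of each equation in (\ref{eq2.2}) become integrals against the Revuz measure $\bar\mu$ on $E^1$ of objects built from $\BX^1$ and $\hat\BX^1$.

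Concretely, I would first treat the case $\mu\in S_0(E_{0,T})$, where by (\ref{eq4.8}) and (\ref{eq.sob7}) we have $R_\alpha\mu=U_\alpha\mu$ and $\hat R_\alpha\mu=\hat U_\alpha\mu$ $m_1$-a.e. The duality (\ref{eq2.26}) and its $\alpha$-version (\ref{eq2.22}) then let me convert each side into an $\EE_\alpha$-pairing. For the first identity I would compute
\[
\int_{E_{0,T}}R_\alpha\eta\,d\mu=\EE_\alpha(U_\alpha\eta',\hat R\ldots)
\]
— more precisely, use that $R_\alpha\eta$ is an $m_1$-version of $G_\alpha\eta$ and $\hat R_\alpha\mu$ of $\hat U_\alpha\mu=\hat G_\alpha(\ldots)$, and then invoke the symmetry $\EE_\alpha(G_\alpha f,g)=(f,g)_\HH=\EE_\alpha(g,\hat G_\alpha f)$ from (\ref{eq2.26}) to move the resolvent from one factor to the other. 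This gives $\int R_\alpha\eta\,d\mu=\int\eta\,\hat R_\alpha\mu\,dm_1$, and symmetrically $\int\hat R_\alpha\eta\,d\mu=\int\eta\,R_\alpha\mu\,dm_1$. This is exactly the content of \cite[Theorem 9.3]{GS} transported through the correspondence (\ref{eq2.cpr}) between $\BX$ and $\BX^1$; alternatively one can read both sides off (\ref{eq2.rede}) and its dual and apply the $E^1$-level duality directly.

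The general case $\mu\in S(E_{0,T})$ then follows by the nest approximation used in Proposition \ref{prop4.1}: choose an $\EE$-nest $\{F_n\}$ of compact sets with $\mu_n=\mathbf{1}_{F_n}\cdot\mu\in S_0(E_{0,T})$, apply the identities to each $\mu_n$, and pass to the limit. Since $\eta$ and $R_\alpha\eta,\hat R_\alpha\eta$ are positive and $A^{\mu_n}\uparrow A^\mu$, $\hat A^{\mu_n}\uparrow\hat A^\mu$ by construction (\ref{eq4.5}), monotone convergence handles both the measure integrals $\int\cdots\,d\mu_n\uparrow\int\cdots\,d\mu$ and the functional integrals defining $R_\alpha\mu_n\uparrow R_\alpha\mu$, $\hat R_\alpha\mu_n\uparrow\hat R_\alpha\mu$. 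The main obstacle I anticipate is the bookkeeping around $\alpha=0$: one must make sure $R_\alpha\mu$ and $\hat R_\alpha\mu$ are genuinely well-defined and finite there, which is where the finite-lifetime relations (\ref{eq8.c14}) and (\ref{eq2.30}) and the construction of $G_0,\hat G_0$ around (\ref{eq2.26cc}) must be invoked; once the $\alpha>0$ identity is in hand, letting $\alpha\downarrow0$ by monotone convergence (using $e^{-\alpha t}\uparrow1$) closes the $\alpha=0$ case.
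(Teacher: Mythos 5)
Your proposal is correct and follows essentially the same route as the paper: reduce to measures of finite (co-)energy integral, identify $R_\alpha\mu$, $\hat R_\alpha\mu$ with $U_\alpha\mu$, $\hat U_\alpha\mu$ via (\ref{eq4.8}) and (\ref{eq.sob7}), transfer the resolvent across the pairing using (\ref{eq2.22}) and the duality (\ref{eq2.26}), and then pass to general $\mu\in S(E_{0,T})$ along a nest with monotone convergence. The only bookkeeping point to watch is that the first identity requires the truncated measures to lie in $\hat S_0(E_{0,T})$ (so that $\hat U_\alpha\mu$ exists) while the second requires $S_0(E_{0,T})$, which is how the paper organizes the two halves of the argument.
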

\begin{proof}
First we assume that $\mu\in\hat S_0(E_{0,T})$ and $\eta\in\BB^+(E_{0,T})\cap\HH$. Then, by (\ref{eq2.22}), the fact that $R_{\alpha}\eta$ is an $m_1$-version of $G_{\alpha}\eta$ and (\ref{eq2.26}),
\[
\int_{E_{0,T}}R_\alpha\eta \,d\mu=\EE_\alpha(R_\alpha\eta,\hat U_{\alpha}\mu)
=\EE_\alpha(G_\alpha\eta,\hat U_\alpha\mu )=\int_{E_{0,T}}\eta \hat U_\alpha\mu\,dm_1.
\]
From this  and (\ref{eq.sob7}) it follows that the first equality in (\ref{eq2.2}) is satisfied. To show it  in the general case, we set
$\eta_n=\frac{ng}{1+ng}(\eta\wedge n)$ for some strictly positive $g\in\HH$ and choose
an $\EE$-nest $\{F_n\}$ of compact sets such that $\mu_n=\mathbf{1}_{F_n}\cdot\mu\in \hat S_0(E_{0,T})$. Such a nest exists by \cite[Theorem 4.7]{T2}. By what has already been proved,
\[
\int_{E_{0,T}}R_\alpha\eta_n\,d\mu_n=\int_{E_{0,T}}\eta_n\hat R_\alpha\mu_n\,dm_1,\quad n\ge1.
\]
Letting $n\rightarrow\infty$  and using \cite[Remark 3.6]{St2} yields the first equality in (\ref{eq2.2}). The proof of the second equality is similar, so we omit it.
\end{proof}

Given a positive smooth measure $\mu$ on $E_{0,T}$, we denote by $R_\alpha \circ\mu$ the Borel measure on $E_{0,T}$ defined by the formula
\[
\int_{E_{0,T}}\eta\,d(R_\alpha\circ\mu)
=\int_{E_{0,T}}R_\alpha\eta\,d\mu,\quad\eta\in\BB^+(E_{0,T}),
\]
and by $\hat R_{\alpha}\circ\mu$ we denote the  Borel measure on $E_{0,T}$ defined by
\[
\int_{E_{0,T}}\eta\,d(\hat R_\alpha\circ\mu)
=\int_{E_{0,T}}\hat R_\alpha\eta\,d\mu,\quad\eta\in\BB^+(E_{0,T}).
\]

By Lemma \ref{lem4.2}, the measures $R_\alpha\circ\mu$,  $\hat R_\alpha\circ\mu$ are absolutely continuous with respect to $m_1$ and
\[
R_\alpha\circ\mu= (\hat R_\alpha\mu)\cdot m_1,\qquad \hat R_\alpha\circ\mu= (R_\alpha\mu)\cdot m_1.
\]

\section{Decomposition of smooth measures}
\label{sec5}

In what follows, we denote by $\langle\langle\cdot,\cdot\rangle\rangle$ the duality pairing between $\WW'_0$ and $\WW_0$.

In this section, we prove that each $\mu\in\MM_b(E_{0,T})$ admits decomposition (\ref{eq1.4}). The converse statement will be proved in Section \ref{sec7}.
We start with  a decomposition of elements of the space $\WW'_0$.

\begin{proposition}
\label{prop5.1} Let $g\in\WW_0'$. Then there exist $g_1\in\VV'$,
$g_2\in\VV$ such that
\begin{equation}
\label{eq3.15} \|g_1\|_{\VV'}\le\|g\|_{\WW'_0}, \quad
\|g_2\|_{\VV}\le \|g\|_{\WW'_0}
\end{equation}
and for every $v\in\WW_0$,
\[
\langle\langle g,v\rangle\rangle=\langle g_1,v\rangle
-\langle\partial_tv,g_2\rangle.
\]
\end{proposition}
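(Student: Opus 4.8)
The plan is to realize $g$ as the restriction of a functional on the product space $\VV\times\VV'$ and then read off $g_1,g_2$ from the representation of that functional, using Hahn--Banach to preserve norms. The key observation is that the norm on $\WW_0$ is precisely $\|v\|_\WW=\|v\|_\VV+\|\partial_tv\|_{\VV'}$, which is an $\ell^1$-type combination, so $g_1$ and $g_2$ should arise as the two coordinates of an $\ell^\infty$-dual element.

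Concretely, I would introduce the map $J:\WW_0\to\VV\times\VV'$ given by $Jv=(v,\partial_tv)$. Equipping $\VV\times\VV'$ with the norm $\|(a,b)\|=\|a\|_\VV+\|b\|_{\VV'}$ makes $J$ an isometry onto its image: $\|Jv\|=\|v\|_\WW$. In particular $J$ is injective, so $\ell:=g\circ J^{-1}$ is a well-defined bounded linear functional on the subspace $J(\WW_0)$ with $\|\ell\|=\|g\|_{\WW'_0}$. By the Hahn--Banach theorem, extend $\ell$ to a functional $L$ on all of $\VV\times\VV'$ with $\|L\|=\|g\|_{\WW'_0}$. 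The dual of $(\VV\times\VV',\|\cdot\|)$ is identified with $\VV'\times\VV$ carrying the max-norm: since $V$ is a Hilbert space, $\VV=L^2(0,T;V)$ is reflexive with $(\VV')'=\VV$, so there exist $g_1\in\VV'$ and $h\in\VV$ with $L(a,b)=\langle g_1,a\rangle+\langle b,h\rangle$ for all $(a,b)$, and $\|L\|=\max(\|g_1\|_{\VV'},\|h\|_\VV)$.

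Setting $g_2:=-h$ and evaluating $L$ at $(a,b)=Jv=(v,\partial_tv)$ gives, for every $v\in\WW_0$,
\[
\langle\langle g,v\rangle\rangle=\ell(Jv)=L(v,\partial_tv)=\langle g_1,v\rangle-\langle\partial_tv,g_2\rangle,
\]
which is the asserted identity, while the bounds (\ref{eq3.15}) follow immediately from $\max(\|g_1\|_{\VV'},\|g_2\|_\VV)=\|L\|=\|g\|_{\WW'_0}$. The argument is essentially soft functional analysis, so the only points requiring genuine care---and the closest thing to an obstacle---are the two identifications: that $J$ is isometric for this $\ell^1$ norm (matching exactly the definition of $\|\cdot\|_\WW$), and that the dual of the $\ell^1$-normed product is the max-normed product $\VV'\times\VV$. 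For the latter, the reflexivity of $\VV$ is precisely what guarantees that the second coordinate of $L$ is represented by an element of $\VV$ rather than of $\VV''$; no probabilistic or form-theoretic input is needed at this stage.
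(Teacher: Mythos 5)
Your argument is correct and is essentially the paper's own proof (which follows \cite[Lemma 2.24]{DPP}): the same isometry $v\mapsto(v,\partial_tv)$ into $\VV\times\VV'$ with the $\ell^1$-sum norm, the same Hahn--Banach extension preserving the norm, and the same identification of the dual as $\VV'\times\VV$ via reflexivity of $\VV$. The only cosmetic difference is that you make the max-norm on the dual explicit to get the bounds in (\ref{eq3.15}), where the paper leaves that verification to the reader.
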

\begin{proof}
The proof is a slight modification of the proof of \cite[Lemma
2.24]{DPP}. We give it for completeness. Let $F=\VV\times\VV'$,
$\|(v_1,v_2)\|_F=\|v_1\|_{\VV}+\|v_2\|_{\VV'}$, and let
\[
T:\WW_0\rightarrow F,\quad T(u)=(u,\partial_tu).
\]
Since $\|T(u)\|_F=\|(u,\partial_tu)\|_F=\|u\|_{\WW}$, $T$ is an isometry.
We equip $T(\WW_0)$ with the norm of $F$ and define
\[
\Phi_g:T(\WW_0)\rightarrow\BR,\qquad \Phi_g(v_1,v_2)=\langle\langle
g,T^{-1}(v_1,v_2)\rangle\rangle,
\]
i.e. $\Phi_g(T(u))=\langle\langle g,u\rangle\rangle$ for $u\in\WW_0$.
Then $\Phi_g$ is a continuous linear functional on $T(\WW_0)$.
%K\\
%Istotnie, poniewaz $\Phi_g$ jest liniowe, to wystarczy sprawdzic
%ciaglosc w zerze. Przypuscmy, ze
%$T(\WW_0)\ni(v^n_1,v^n_2)\rightarrow(0,0)$ w $E$. Niech
%$u_n\in\WW_0$ bedzie takie, ze $T(u_n)=(v^n_1,v^n_2)$. Poniew/z
%$T$ jest izometria, to $u_n\rightarrow0$ w $\WW_0$. Dlatego
%$\Phi_g(v^n_1,v^n_2)=\langle\langle
%g,u_n\rangle\rangle\rightarrow0$.\\
%KK\\
By the  Hahn-Banach theorem, $\Phi_g$ extends to a continuous linear  functional on $F$ (still denoted by $\Phi_g$) such that
\begin{equation}
\label{eq3.16} \|\Phi_g\|_{F'}=\|g\|_{\WW'_0}\,.
\end{equation}
%K\\
%Mamy
%\[
%\|\Phi_g\|=\sup_{(v_1,v_2)\in T(\WW_0):\|(v_1,v_2)\|_F\le1}
%|\langle\langle g,T^{-1}(v_1,v_2)\rangle\rangle|
%\le\sup_{u\in\WW_0,\|u\|_{\WW}\le1}
%|\langle\langle g,u\rangle\rangle|=\|g\|_{\WW'}\,.
%\]
%(przedostatnia rownosc wynika z faktu, ze $T$ jest izometria).
%\\
%KK\\
Since  $\VV$ is reflexive, $F'=\VV'\times\VV$, so $\Phi_g$ is of the form
\[
\Phi_g(v_1,v_2)=\langle g_1,v_1\rangle -\langle v_2,g_2\rangle,\quad
(v_1,v_2)\in F,
\]
for some $g_1\in\VV'$, $g_2\in\VV$.
%\\
%K\\
%Mamy $\Phi_g\in F'=(\VV\times\VV')'=\VV'\times\VV''(=\VV'\times\VV)$.
%Dlatego $\Phi_g(v_1,v_2)=\langle g_1,v_1\rangle+\langle g''_2,v_2\rangle$ %dla pewnych $g_1\in\VV'$, $g''_2\in\VV''$. Poniewaz $\VV$ refleksywna, to %$\langle g''_2,v_2\rangle_{\VV'',\VV'}=\langle v_2,g_2\rangle$ dla pewnego %$g_2\in\VV$.
%Rozpartrujac $-g_2$ zamiast $g_2$ mozemy miec znak ``$-$".\\
%KK\\
One can check that $\|g_1\|_{\VV'}\le\|\Phi_g\|_{F'}$ and
$\|g_2\|_{\VV}\le \|\Phi_g\|_{F'}$, which when combined with
(\ref{eq3.16}) yields (\ref{eq3.15}).
%\\
%K:\\
%Mamy
%\begin{align*}
%\|g_1\|_{\VV'}&=\sup_{v_1\in\VV,\|v_1\|_{\VV}\le1}|\langle g_1,v_1\rangle|
%=\sup_{v_1\in\VV,\|v_1\|_{\VV}\le1,v_2=0}|\langle g_1,v_1\rangle-\langle %v_2,g_2\rangle|\\
%&\le\sup_{(v_1,v_2)\in F,\|(v_1,v_2)\|_{F}\le1}|\langle g_1,v_1\rangle %-\langle v_2,g_2\rangle|=\|\Phi_g\|_{F'}\le\|g\|_{\WW'}
%\end{align*}
%oraz
%\begin{align*}
%\|g_2\|_{\VV}&=\|g''_2\|_{\VV''}=\sup_{v_2\in\VV',\|v_2\|_{\VV'}\le1}|\langle %g''_2,v_2\rangle|=\sup_{v_1=0,v_2\in\VV',\|v_2\|_{\VV'}\le1}
%|\langle g_1,v_1\rangle+\langle g''_2,v_2\rangle|\\
%&\le\sup_{(v_1,v_2)\in F,\|(v_1,v_2)\|_{F}\le1}
%|\langle g_1,v_1\rangle+\langle g''_2,v_2\rangle|\\
%&=\sup_{(v_1,v_2)\in F,\|(v_1,v_2)\|_{F}\le1}
%|\langle g_1,v_1\rangle+\langle v_2, %g_2\rangle|=\|\Phi_g\|_{F'}\le\|g\|_{\WW'}.
%\end{align*}
%KK\\
Hence
\[
\langle\langle g,v\rangle\rangle =\Phi_g(T(v))=\Phi_g(v,\partial_tv)=\langle
g_1,v\rangle -\langle\partial_tv,g_2\rangle
\]
for $v\in\WW_0$, and the lemma is proved.
\end{proof}

\begin{lemma}
\label{lem2.2}
If  $v\in\WW_0$, then
\begin{equation}
\label{eq2.3} \alpha\hat G_{\alpha}v\rightarrow v\mbox{ weakly in } \WW_0.
\end{equation}
\end{lemma}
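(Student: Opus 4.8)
The plan is to prove weak convergence in the reflexive space $\WW_0$ by combining a uniform bound on $\{\alpha\hat G_\alpha v\}$ in $\WW_0$ with the identification of every weak limit point as $v$. Throughout set $u_\alpha=\alpha\hat G_\alpha v$ and $w_\alpha=u_\alpha-v\in\WW_0$. First I would record a difference equation: scaling $\EE_\alpha(g,\hat G_\alpha v)=(v,g)_\HH$ (from (\ref{eq2.26}) with $f=v$) by $\alpha$ and subtracting $\EE_\alpha(g,v)=\EE(g,v)+\alpha(v,g)_\HH$ gives
\[
\EE_\alpha(g,w_\alpha)=-\EE(g,v),\qquad g\in\VV.
\]

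The first real step is the $\VV$-bound. Testing this identity with $g=w_\alpha\in\WW_0\subset\VV$ and using the second line of (\ref{eq2.23}) yields $\EE_\alpha(w_\alpha,w_\alpha)=\frac12\|w_\alpha(T)\|_H^2+\BB(w_\alpha,w_\alpha)+\alpha\|w_\alpha\|^2_\HH$. By assumption (c), $\BB(w_\alpha,w_\alpha)\ge c^{-1}(\|w_\alpha\|^2_\VV-\|w_\alpha\|^2_\HH)$, so for $\alpha\ge c^{-1}$ the left-hand side dominates $c^{-1}\|w_\alpha\|^2_\VV$. On the right, since $v\in\WW_0$, assumption (a) gives $|\EE(w_\alpha,v)|=|\langle\partial_tv,w_\alpha\rangle+\BB(w_\alpha,v)|\le(\|\partial_tv\|_{\VV'}+K\|v\|_\VV)\|w_\alpha\|_\VV$. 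Hence $\|w_\alpha\|_\VV\le c(\|\partial_tv\|_{\VV'}+K\|v\|_\VV)$ uniformly in $\alpha$, so $\{u_\alpha\}$ is bounded in $\VV$. Because $(\hat G_\alpha)$ is a strongly continuous resolvent of contractions on $\HH$, one also has $u_\alpha\to v$ strongly in $\HH$; combined with the $\VV$-bound this forces $u_\alpha\rightharpoonup v$ weakly in $\VV$.

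It remains to bound $\partial_tu_\alpha$ in $\VV'$, and this is the step I expect to be the main obstacle. From the difference equation, $\partial_tu_\alpha=\alpha(v-u_\alpha)-\hat Lu_\alpha$ in $\VV'$, where $\hat L\colon\VV\to\VV'$ is defined by $\langle\hat Lu,g\rangle=\BB(g,u)$ and satisfies $\|\hat Lu_\alpha\|_{\VV'}\le K\|u_\alpha\|_\VV$, which is already controlled. The delicate term is $\alpha(v-u_\alpha)=-\alpha w_\alpha=-\alpha\hat G_\alpha\phi$, where $\phi\in\VV'$ is given by $\langle\phi,g\rangle=\langle\partial_tv,g\rangle+\BB(g,v)$; the crude estimate only yields $\|\alpha w_\alpha\|_\HH=O(\alpha^{1/2})$, which is insufficient. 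I would control it by exploiting the time structure: integrating by parts in $t$ and using $v(0)=0$ transfers the factor $\alpha$ onto $\partial_tv$, so that $\alpha w_\alpha$ equals $\alpha\hat G_\alpha$ applied to a \emph{fixed} element of $\VV'$, and the uniform operator bound $\|\alpha\hat G_\alpha\|_{\VV'\to\VV'}\le C$ (a consequence of the $\HH$-contraction property together with the coercivity (c)) gives $\|\alpha w_\alpha\|_{\VV'}\le C\|\phi\|_{\VV'}$. Alternatively one may invoke \cite[Proposition 2.7]{St1}, already quoted above, which asserts $\alpha\hat G_\alpha v\to v$ strongly in $\WW_0$ and hence delivers the required bound at once.

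Once the $\VV'$-bound is in hand, $\{u_\alpha\}$ is bounded in $\WW_0$. Every weakly convergent subnet has $\VV$-limit $v$ by the second paragraph, and since the distributional time-derivative is continuous under weak-$\VV$ convergence, the $\VV'$-weak limit of $\partial_tu_\alpha$ must be $\partial_tv$; as $\WW_0$ is a closed, hence weakly closed, subspace of the reflexive space $\WW$, the limit lies in $\WW_0$ and equals $v$. Because all subnets share the same limit, the whole net converges, giving $\alpha\hat G_\alpha v\rightharpoonup v$ weakly in $\WW_0$.
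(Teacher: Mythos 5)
Your proof is correct in substance but takes a genuinely different route from the paper's. The paper's argument is a duality trick: it quotes \cite[Proposition I.3.7]{St2} for the strong convergence $\alpha G_{\alpha}w\rightarrow w$ and $\alpha\hat G_{\alpha}w\rightarrow w$ in $\VV$, moves the resolvent onto the test function via the identity $\EE(w,\alpha\hat G_{\alpha}v)=\EE(\alpha G_{\alpha}w,v)$ to get pointwise (hence, by Banach--Steinhaus, weak) convergence of $\partial_t(\alpha\hat G_{\alpha}v)$ in $\VV'$, and then concludes by pairing against the decomposition of $\WW_0'$ from Proposition \ref{prop5.1}. You instead run a direct a priori estimate: coercivity of $\BB$ gives the uniform $\VV$-bound on $w_\alpha$, the $\HH$-contraction property identifies the weak $\VV$-limit, and the resolvent equation controls $\partial_t u_\alpha$. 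Your route buys a self-contained quantitative $\VV$-bound (which the paper gets for free from \cite{St2}); the paper's route never needs a norm bound on $\alpha w_\alpha$ in $\VV'$ at all, which is precisely the step you correctly single out as delicate.

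One caveat on that step: the uniform bound $\|\alpha\hat G_{\alpha}\|_{\VV'\rightarrow\VV'}\le C$ does \emph{not} follow from the $\HH$-contraction property together with coercivity alone --- the direct energy estimate yields only $O(\alpha^{1/2})$ in $\VV'$ just as it does in $\HH$, which is the very obstruction you flagged. The bound is nevertheless true, because $\alpha\hat G_{\alpha}$ on $\VV'$ is the adjoint of $\alpha G_{\alpha}$ on $\VV$, and the latter family is uniformly bounded by Banach--Steinhaus applied to the strong $\VV$-convergence in \cite[Proposition I.3.7]{St2}; alternatively, your fallback citation of \cite[Proposition 2.7]{St1} (strong convergence in $\WW_0$, which the paper itself invokes in the proof of Lemma \ref{lem2.3}) settles the matter at once, so the argument is salvageable as written. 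The ``integration by parts in $t$'' remark is superfluous: $\alpha w_\alpha=-\alpha\hat G_{\alpha}\phi$ is already $\alpha\hat G_{\alpha}$ applied to a fixed element of $\VV'$, so nothing further needs to be transferred.
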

\begin{proof}
By \cite[Proposition I.III.7]{St2}, for every $w\in\VV$,
\begin{equation}
\label{eq3.5} \alpha G_{\alpha}w\rightarrow  w,\quad  \alpha\hat G_{\alpha}w\rightarrow  w\quad\mbox{in }
\VV.
\end{equation}
By (\ref{eq3.5}), for $w\in\VV$ we have
\begin{align*}
\langle\hat\Lambda(\alpha\hat
G_{\alpha}v),w\rangle&=\BB(w,\alpha\hat G_{\alpha}v)
-\EE(w,\alpha\hat G_{\alpha}v)\\&
=\BB(w,\alpha\hat G_{\alpha}v)
-\EE(\alpha G_{\alpha}w,v)\rightarrow\BB(w,v) -\EE(w,v)
=\langle\hat\Lambda v,w\rangle.
\end{align*}
Since $\VV$ is reflexive, $\{\hat\Lambda(\alpha\hat
G_{\alpha}v)\}$ is weakly convergent in $\VV'$ as
$\alpha\rightarrow\infty$. From this, (\ref{eq3.5}) and Proposition
\ref{prop5.1} we get (\ref{eq2.3}).
\end{proof}

\begin{proposition}
\label{prop3.2} Let $\mu\in\MM_{0,b}(E_{0,T})$. Then there exist $f\in
L^1(E_{0,T};m_1)$ and $g\in\WW'_0$ such that
\begin{equation}
\label{eq3.decb}
\mu=f\cdot m_1+g,
\end{equation}
i.e. for every bounded $v\in\WW_0$ we have
\begin{equation}
\label{eq3.6} \int_{E_{0,T}} \tilde v\,d \mu=\int_{E_{0,T}}fv\,dm_1+\langle\langle
g,v\rangle\rangle.
\end{equation}
\end{proposition}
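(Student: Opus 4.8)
The plan is to reduce to the case $\mu\ge0$ via the Jordan decomposition $\mu=\mu^+-\mu^-$ (each part positive, bounded and smooth), and to build the pair $(f,g)$ from the $1$-potential of $\mu$. Using smoothness, choose by \cite[Theorem 4.7]{T2} an $\EE$-nest $\{F_k\}$ of compact sets with $\mu_k:=\mathbf{1}_{F_k}\cdot\mu\in S_0(E_{0,T})$ and $\mu_k\uparrow\mu$. Set $u_k:=U_1\mu_k=R_1\mu_k\ge0$; by uniqueness of the potential these increase to $u:=R_1\mu$. The total mass is controlled, since by Lemma \ref{lem4.2} and $\hat R_1\mathbf{1}\le\mathbf{1}$,
\[
\int_{E_{0,T}}R_1\mu\,dm_1=\int_{E_{0,T}}\hat R_1\mathbf{1}\,d\mu\le|\mu|(E_{0,T})<\infty,
\]
so $u\in L^1(E_{0,T};m_1)$ and in particular $u<\infty$ $m_1$-a.e.

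The heart of the argument is an a priori energy estimate for truncations of the potential. For each level $j\ge1$ I would show that $u\wedge j$ lies in $\VV$ with
\[
\BB_1(u\wedge j,u\wedge j)\le j\,|\mu|(E_{0,T}),
\]
obtained by passing to the limit in $\BB_1(u_k\wedge j,u_k\wedge j)\le\EE_1(u_k,u_k\wedge j)=\int_{E_{0,T}}\widetilde{u_k\wedge j}\,d\mu_k\le j\,\mu_k(E_{0,T})$. This lets me split $\mu$ into a finite-energy part, which by Lemma \ref{lem2.3} induces a $\WW_0$-continuous functional and hence an element $g\in\WW'_0$ (to be decomposed further into $g_1+\partial_tg_2$ by Proposition \ref{prop5.1}), and a complementary tail. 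The tail is absolutely continuous with $L^1$ density: since $\mu$ charges no set of zero capacity while $\{u\ge j\}$ decreases to an $\EE$-exceptional set, one gets $\mu(\{u\ge j\})\to0$, which supplies the equi-integrability (Dunford--Pettis) needed to realise the regularizations $\alpha R_\alpha\mu$, whose masses are bounded by $|\mu|(E_{0,T})$ uniformly in $\alpha$, as converging weakly in $L^1$ to a fixed density $f$.

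Finally I would assemble $\mu=f\cdot m_1+g$ and verify (\ref{eq3.6}) for bounded $v\in\WW_0$ by writing, with the help of Lemma \ref{lem4.2},
\[
\int_{E_{0,T}}\tilde v\,d\mu-\int_{E_{0,T}}f\,v\,dm_1
=\lim_{\alpha\to\infty}\int_{E_{0,T}}\big(\tilde v-\alpha\widetilde{\hat G_\alpha v}\big)\,d\mu,
\]
and identifying the right-hand side with $\langle\langle g,v\rangle\rangle$ using the weak-$L^1$ convergence of $\alpha R_\alpha\mu$ to $f$ together with $\alpha\hat G_\alpha v\to v$ weakly in $\WW_0$ (Lemma \ref{lem2.2}) and quasi-continuity. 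The main obstacle is the energy estimate on $\BB_1(u\wedge j,u\wedge j)$: because $u_k\in\VV$ carries no time derivative, $u_k\wedge j$ is not an admissible test function in $\EE_1(u_k,\cdot)$ and the boundary/chain-rule term $\langle\partial_t(u_k\wedge j),u_k\rangle$ is not literally defined. I would resolve this by regularizing in time through the coresolvent, replacing $u_k\wedge j$ by $\alpha\hat G_\alpha(u_k\wedge j)\in\WW_0$, exploiting the Markovian (normal-contraction) property of the forms $B^{(t)}$ so that the symmetric part dominates, and checking that the terminal time contribution has the right sign from the potential structure of $u_k$ (as in \cite{Pierre3}) before letting $\alpha\to\infty$; the second delicate point, where smoothness of $\mu$ is again essential, is producing a genuine $L^1$ limit $f$ rather than merely a weak-$\ast$ measure limit.
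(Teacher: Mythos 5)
There is a genuine gap, and it lies at the point you yourself flag as delicate: producing the $L^1$ density $f$. You claim that $\mu(\{u\ge j\})\to 0$ supplies the equi-integrability needed to extract, via Dunford--Pettis, a weak $L^1$ limit $f$ of the regularizations $\alpha R_\alpha\mu$. This cannot work: if $\alpha R_\alpha\mu\to f$ weakly in $L^1(E_{0,T};m_1)$, then testing against bounded quasi-continuous functions and using Lemma \ref{lem4.2} forces $\mu=f\cdot m_1$, i.e. the whole of $\mu$ would be absolutely continuous. Bounded smooth measures need not be absolutely continuous (e.g. $\mu=\delta_{\{a\}}\otimes\beta\cdot m$ as in the example of Section \ref{sec8}, which is smooth but singular with respect to $m_1$), so the family $\{\alpha R_\alpha\mu\}_{\alpha}$ is in general \emph{not} uniformly integrable; it converges only vaguely, as measures, to $\mu$. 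The fact that $\mu(\{u\ge j\})\to0$ controls the total variation of the tail $\mathbf{1}_{\{u\ge j\}}\cdot\mu$, which is a quite different (and much weaker) piece of information than uniform integrability of the resolvent densities. Relatedly, your proposed single splitting ``finite-energy part $+$ tail'' cannot close the argument even if the tail is only claimed to be small in total variation: the tail is again merely a bounded smooth measure, so it is neither of finite energy nor of the form $f\cdot m_1$, and $L^1+\WW'_0$ is not closed under perturbations that are small in total variation.

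What is missing is the iteration that the paper's proof (following the Boccardo--Gallou\"et--Orsina and Droniou--Porretta--Prignet pattern) is built around: write $\mu=\sum_n\mu_n$ with $\mu_n=\mathbf{1}_{F_{n+1}\setminus F_n}\cdot\mu\in S_0(E_{0,T})$ along a nest, so that $\sum_n\|\mu_n\|_{TV}=\|\mu\|_{TV}$; for each \emph{finite-energy} piece $\mu_n$ use the regularizations $\alpha\hat R_\alpha\circ\mu_n$ (absolutely continuous with densities of mass $\le\|\mu_n\|_{TV}$ by Markovianity) together with Lemma \ref{lem2.2} and Banach--Saks to find an absolutely continuous approximation whose error is at most $2^{-n}$ in the $\WW'_0$ norm; then sum the densities to get $f\in L^1$ and the errors to get $g\in\WW'_0$. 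In other words, it is the finite-energy pieces that get converted into $L^1$ functions (up to a dual-norm error one can make summably small), not the tail; the role of smoothness is to provide the nest, not equi-integrability. Your reduction to positive $\mu$, the mass bound $\int R_1\mu\,dm_1\le\|\mu\|_{TV}$, and the truncation estimate $\BB_1(u\wedge j,u\wedge j)\le j\|\mu\|_{TV}$ are all correct (the last is essentially Lemma \ref{lm8.1dm}), but the truncation estimate is not what drives this proposition -- in the paper it is used only later, for the quasi-boundedness of $g_2$.
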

\begin{proof} Let $\mu=\mu^+-\mu^-$ be the Hahn decomposition of $\mu$. Then $\mu^+,\mu^-\in\MM_{0,b}(E_{0,T})$. Therefore, without loss of generality, we can assume that $\mu$ is positive. By \cite[Theorem 4.7]{T2},  there exists a nest $\{F_n\}$ such that
$\fch_{F_n}\cdot|\mu|\in S_0(E_{0,T})$ for every $n\in\BN$. Set
$\mu_n=\fch_{F_{n+1}\setminus F_n}\cdot\mu$. Then $|\mu_n|\in S_0(E_{0,T})$
and
\begin{equation}
\label{eq3.10} \sum^{\infty}_{n=1}\mu_n=\mu
\end{equation}
because the set $(\bigcup^{\infty}_{n=1}F_n)^c$ is exceptional.
Let $\mu^{\alpha}_n=\alpha \hat R_{\alpha}\circ\mu_n$. Then for $v\in\WW_0$
we have
\[
\EE_1(U_1\mu_n^{\alpha},v)=\int_{E_{0,T}} \tilde v\,d\mu^{\alpha}_n=\int_{E_{0,T}}\alpha
\hat R_{\alpha}v\,d\mu_n,\qquad \EE_1(U_1\mu_n,\alpha\hat R_{\alpha}v)
=\int_{E_{0,T}}\alpha\hat R_{\alpha}v\,d\mu_n.
\]
Hence
\[
\EE_1(U_1\mu^{\alpha}_n,v) =\EE_1(U_1\mu_n,\alpha\hat
R_{\alpha}v),
\]
so by  Lemma \ref{lem2.2},
\begin{equation}
\label{eq3.3}
\EE_1(U_1\mu^{\alpha}_n,v)\rightarrow\EE_1(U_1\mu_n,v).
\end{equation}
Let $j:\VV\rightarrow\WW'_0$ be the mapping defined as
\[
\langle\langle j(u),v\rangle\rangle=\EE_1(u,v),\quad
u\in\VV,v\in\WW_0.
\]
From (\ref{eq3.3}) and reflexivity of the space $\WW_0$ it follows
that the sequence $\{j(U_1\mu^{\alpha}_n)\}$ converges to
$j(U_1\mu_n)$ weakly in $\WW'_0$  as $\alpha\rightarrow\infty$.
%\\
%K:\\
%Niech $u_n'\rightarrow u'$ s/labo w $\WW'_0$. Wtedy, z definicji,
%\[
%\langle u'',u_n'\rangle\rightarrow\langle u'',u'\rangle\qquad (*)
%\]
%dla dowolnego $u''\in(\WW_0')'=\WW_0''$. Poniewa/z $\WW_0$ jest
%refleksywna, to kanoniczne w/lo/zenie $J:\WW_0\rightarrow\WW_0''$
%jest ,,na''. Dlatego dla ka/zdego $u''$ istnieje $u$ takie, /ze
%$u''=J(u)$. Z $(*)$ wynika wi/ec, /ze dla ka/zdego $u\in\WW_0$,
%\[
%\langle u'_n,u\rangle=\langle J(u),u_n'\rangle\rightarrow \langle
%J(u),u'\rangle=\langle u',u\rangle,
%\]
%Na odwr/ot, przypu/s/cmy, /ze $u_n',u'\in\WW'$ oraz $\langle
%u_n',u\rangle\rightarrow\langle u',u\rangle$ dla ka/zdego
%$u\in\WW_0$. Wtedy
%\[
%\langle J(u),u_n'\rangle=\langle u_n',u\rangle\rightarrow\langle
%u',u\rangle=\langle J(u),u'\rangle,
%\]
%a wi/ec, poniewa/z $J$ jest ,,na'',  zachodzi $(*)$ dla ka/zdego
%$u''\in\WW_0''$.\\
%KK:\\
By the Banach-Saks theorem, there is a sequence $\{\alpha_l\}$
such that $\alpha_l\rightarrow\infty$ as $l\rightarrow\infty$ and
\[
j(U_1\hat f_k(\mu_n))\rightarrow j(U_1\mu_n)\quad\mbox{in }
\WW'_0
\]
as $k\rightarrow\infty$, where
\[
\hat f_k(\mu_n)=\frac1k\sum^k_{l=1}\mu^{\alpha_l}_n.
\]
Therefore one can find a subsequence $\{k_n\}$ such that for every
$n\in\BN$,
\begin{equation}
\label{eq3.1} \|j(U_1(\mu_n-\hat f_{k_n}(\mu_n)))\|_{\WW'_0}\le
2^{-n}.
\end{equation}
By (\ref{eq2.2}), $\hat f_k(\mu_n)=f_k(\mu_n)\cdot m_1$, where
\[
f_k(\mu_n)=\frac1k\sum^k_{l=1}\alpha_lU_{\alpha_l}\mu_n.
\]
Set
\begin{equation}
\label{eq3.11}
g=\sum^{\infty}_{n=1}j(U_1(\mu_n-(\hat f_{k_n}(\mu_n))),\qquad
f=\sum^{\infty}_{n=1}f_{k_n}(\mu_n).
\end{equation}
By (\ref{eq3.1}), $g\in\WW'_0$. Since $m_1$ is $\sigma$-finite,
there exists a sequence $\{V_i\}$ of Borel subsets of $E_T$ such
that $\bigcup^{\infty}_{i=1}V_i=E_T$, $V_i\subset V_{i+1}$ and
$m_1(V_i)<\infty$, $l\in\BN$. By (\ref{eq4.8}) and Lemma \ref{lem4.2},
for every $i\in\BN$ we have
\begin{align}
\label{eq.intd}
\int_{E_{0,T}}\mathbf{1}_{V_i}f\,dm_1= \sum_{n=1}^{\infty}
\int_{E_{0,T}}\mathbf{1}_{V_i}f_{k_n}(\mu_{n})\,dm_1=\sum_{n=1}^{\infty}
\int_{E_{0,T}}\frac{1}{k_n}\sum_{l=1}^{k_n}\alpha_l \hat R_{\alpha_l}(\mathbf{1}_{V_i})\,d\mu_{n}.
\end{align}
Since $\alpha\hat R_{\alpha}$ is a Markov operator, it follows that
\[
\int_{E_{0,T}}\mathbf{1}_{V_i}f\,dm_1\le\sum_{n=1}^{\infty}\|\mu_{n}\|_{TV}=\|\mu\|_{TV}
\]
Hence $\|f\|_{L^1(E_T;m_1)}<\infty$ by the monotone convergence
theorem. Since
\[
\int_{E_{0,T}} \tilde v\,d \mu_n=\EE_1(U_1\mu_n,v)
\]
and
\[
\int_{E_{0,T}}f_{n_k}(\mu_n)v\,dm_1=\int_{E_{0,T}}
\tilde v\,d \hat f_{k_n}(\mu_n)
=\EE_1(U_1(\hat f_{k_n}(\mu_n)),v),
\]
for every  $N\in\BN$ we have
\begin{align*}
\sum^{N}_{n=1}\int_{E_{0,T}}\tilde v\,d\mu_n
&=\sum^{N}_{n=1}\EE_1(U_1(\mu_n-\hat f_{k_n}(\mu_n)),v) +\sum^{N}_{n=1}\int_{E_{0,T}}f_{k_n}(\mu_n)v\,dm_1\\
&=\langle\langle\sum^N_{n=1}j(U_1(\mu_n-\hat f_{k_n}(\mu_n))),v\rangle\rangle+\sum^N_{n=1}\int_{E_{0,T}}(f_{k_n}(\mu_n)v\,dm_1.
\end{align*}
Letting $N\rightarrow\infty$ and using (\ref{eq3.10}),
(\ref{eq3.11}) we obtain (\ref{eq3.6}), which completes the proof of the proposition.
\end{proof}

\begin{remark}
The decomposition (\ref{eq3.decb}) also holds for arbitrary $\mu\in S(E_{0,T})$.
In this case,  in general, $f\notin L^1(E_{0,T};m_1)$ but $f$  is quasi-integrable, i.e. there exists a nest $\{F_n\}$
of compact subsets of $E_{0,T}$ such that $\mathbf{1}_{F_n}\cdot f\in L^1(E_{0,T};m_1)$,
$n\ge1$. The proof of the decomposition (\ref{eq3.decb}) in case $\mu\in S(E_{0,T})$ runs as the proof of Proposition \ref{prop3.2}
with the only difference that in (\ref{eq.intd}) we replace $V_i$ by
$F_i$  defined in the proof of Proposition \ref{prop3.2}. We then get
\[
\int_{E_{0,T}}\mathbf{1}_{F_i}f\,dm_1\le \|\mathbf{1}_{F_i}\cdot\mu\|_{TV},\quad i\ge 1.
\]
\end{remark}

Combining Proposition \ref{prop5.1} with Proposition \ref{prop3.2} we get
the following theorem.

\begin{theorem}
\label{th3.2} Let $\mu\in\MM_{0,b}$. Then there exist $f\in L^1(E_{0,T};m_1)$, $g_1\in\VV'$, $g_2\in\VV$ such that
\begin{equation}
\label{eq5.12}
\int_{E_{0,T}}\tilde v\,d\mu=\int_{E_{0,T}}fv\,dm_1+\langle g_1,v\rangle
-\langle\partial_tv,g_2\rangle
\end{equation}
for every bounded $v\in\WW_0$.
\end{theorem}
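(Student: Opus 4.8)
The plan is to derive (\ref{eq5.12}) directly by composing the two preceding results, each of which already supplies one half of the required structure. There is nothing new to construct: Proposition \ref{prop3.2} peels off the absolutely continuous part and packages the remaining, more singular, contribution as a single functional on $\WW_0$, while Proposition \ref{prop5.1} is precisely the tool that resolves such a functional into a $\VV'$-part and a time-derivative part. So the whole argument is a substitution.

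First I would apply Proposition \ref{prop3.2} to $\mu\in\MM_{0,b}(E_{0,T})$, obtaining $f\in L^1(E_{0,T};m_1)$ and $g\in\WW'_0$ with
\[
\int_{E_{0,T}}\tilde v\,d\mu=\int_{E_{0,T}}fv\,dm_1+\langle\langle g,v\rangle\rangle
\]
for every bounded $v\in\WW_0$. Next I would feed this $g\in\WW'_0$ into Proposition \ref{prop5.1}, which yields $g_1\in\VV'$ and $g_2\in\VV$ (with the bonus bounds $\|g_1\|_{\VV'},\|g_2\|_{\VV}\le\|g\|_{\WW'_0}$, not needed here) such that
\[
\langle\langle g,v\rangle\rangle=\langle g_1,v\rangle-\langle\partial_tv,g_2\rangle,\qquad v\in\WW_0.
\]
Substituting the second identity into the first gives, for every bounded $v\in\WW_0$,
\[
\int_{E_{0,T}}\tilde v\,d\mu=\int_{E_{0,T}}fv\,dm_1+\langle g_1,v\rangle-\langle\partial_tv,g_2\rangle,
\]
which is exactly (\ref{eq5.12}).

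Since both propositions are already established, this step carries no genuine obstacle; the real work lies upstream. The hard part is Proposition \ref{prop3.2}, which produces the $L^1$ density through the nest $\mathbf{1}_{F_n}\cdot|\mu|\in S_0(E_{0,T})$ (via \cite[Theorem 4.7]{T2}) together with a Banach--Saks extraction and the potential-theoretic identities of Section \ref{sec4}; and Proposition \ref{prop5.1}, which rests on a Hahn--Banach extension of the functional defined on the isometric image $T(\WW_0)=\{(u,\partial_tu):u\in\WW_0\}\subset\VV\times\VV'$, using reflexivity of $\VV$ to identify $F'=\VV'\times\VV$. The only bookkeeping point I would verify is that the two test-function classes match: Proposition \ref{prop3.2} gives its identity for bounded $v\in\WW_0$, whereas Proposition \ref{prop5.1} holds for all $v\in\WW_0$; restricting the latter to bounded $v$ causes no loss, so the combined identity is valid precisely on bounded $v\in\WW_0$, as the statement asserts.
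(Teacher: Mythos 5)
Your proof is correct and is exactly the paper's argument: the authors state Theorem \ref{th3.2} immediately after the sentence ``Combining Proposition \ref{prop5.1} with Proposition \ref{prop3.2} we get the following theorem,'' which is precisely your substitution of the $\WW_0'$-decomposition into the $f\cdot m_1+g$ decomposition. Your closing remark about the test-function classes (restricting Proposition \ref{prop5.1} from all of $\WW_0$ to bounded elements) is the right bookkeeping check and is harmless, as you note.
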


Note that $f,g_1,g_2$ in Theorem \ref{th3.2} are not uniquely determined. In what follows, any triplet of functions having the same properties as the triplet $(f,g_1,g_2)$ appearing in Theorem \ref{th3.2} will be called a decomposition of $\mu$.

\begin{remark}
From (\ref{eq3.11}) it follows that if $\mu\in\MM_{0,b}(E_{0,T})$ is positive, then the $L^1$ part $f$ of the decomposition of $\mu$ can be chosen to be positive.
\end{remark}

\section{Semilinear equations with right-hand side in $\WW'_0$}
\label{sec6}

In this section, using the decomposition of elements of $\WW'_0$ given in Proposition \ref{prop5.1}, we will show an existence and uniqueness result for the following Cauchy
problem
\begin{equation}
\label{eq7.1}
-\partial_t u-L_t u=f(\cdot,u)+g,\qquad u(T,\cdot)=\varphi.
\end{equation}
In (\ref{eq7.1}),  $g\in\WW'_0$, $f:E_{0,T}\times\BR\rightarrow \BR$ and $\varphi\in L^2(E;m)$.

We will need the following assumption: there exist $\lambda\in\BR$, $M\ge 0$ and a positive $\varrho\in \HH$ such for all $x\in E_{0,T}$ and $y,y'\in\BR$,
\begin{equation}
\label{eq6.2} |f(x,y)|\le \varrho(x)+M|y|,\qquad (f(x,y)-f(x,y'))(y-y')\le \lambda |y-y'|^2.
\end{equation}

\begin{definition}
We say that $u\in\VV$ is a solution of (\ref{eq7.1}) if
\begin{equation}
\label{eq7.2} \langle\partial_t \eta,u\rangle+\BB(u,\eta)
=(\varphi,\eta(T,\cdot))_{H}+(f(\cdot,u),\eta)_{\HH}+\langle\langle
g,\eta\rangle\rangle,\quad \eta\in\WW_0,
\end{equation}
where $\BB$ is defined by (\ref{eq2.24}).
\end{definition}

In what follows, for a function $\eta$ on $E_{0,T}$ and
$\varepsilon>0,\, t\in [0,T]$, we set $\eta^t=
\eta\mathbf{1}_{[t,T]\times E}$ and
\[
\eta^t_\varepsilon(s)=0,\, s\in [0,t], \quad
\eta^t_\varepsilon(s)=\varepsilon^{-1}\eta(t+\varepsilon)(s-t),\,
t\in [t,t+\varepsilon],\quad\eta^t_\varepsilon(s)= \eta(s),\, s\in
[t+\varepsilon,T].
\]
It is clear that for every $\eta\in V$,
$\eta^t_\varepsilon\rightarrow \eta^t$ strongly in $V$ as
$\varepsilon\searrow0$.

\begin{theorem}
\label{tw.sob14} Assume that $f$ is a measurable function such
that \mbox{\rm(\ref{eq6.2})} is satisfied.
Then there exists a unique solution of {\rm (\ref{eq7.1})}.
\end{theorem}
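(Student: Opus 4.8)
The plan is to reduce the problem with data $g\in\WW'_0$ to one with data in $\VV'$, for which the classical variational theory applies, and then to handle the nonlinearity by monotonicity. First I would invoke Proposition \ref{prop5.1} to write $\langle\langle g,\eta\rangle\rangle=\langle g_1,\eta\rangle-\langle\partial_t\eta,g_2\rangle$ with $g_1\in\VV'$, $g_2\in\VV$, and carry out the change of unknown $w=u+g_2$. Substituting into (\ref{eq7.2}) and moving the term $-\langle\partial_t\eta,g_2\rangle$ to the left-hand side, $w\in\VV$ should solve
\[
\langle\partial_t\eta,w\rangle+\BB(w,\eta)=(\varphi,\eta(T,\cdot))_H+(\tilde f(\cdot,w),\eta)_{\HH}+\langle g_1+\bar g_2,\eta\rangle,\quad\eta\in\WW_0,
\]
where $\tilde f(x,y)=f(x,y-g_2(x))$ and $\bar g_2\in\VV'$ is the functional $\eta\mapsto\BB(g_2,\eta)$, well defined since $g_2\in\VV$. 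A direct check shows that $\tilde f$ again satisfies (\ref{eq6.2}) with the same $\lambda,M$ and with $\varrho$ replaced by $\varrho+M|g_2|\in\HH$; thus the whole problem reduces to the case $g\in\VV'$, $\varphi\in H$, which is the essential one (and the only one needed for the applications in later sections).

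For the reduced problem I would secure coercivity by an exponential-in-time substitution $u(t)=e^{-\beta t}v(t)$, which turns the principal part into $-\partial_t v-L_tv+\beta v$ and replaces $f$ by $\check f(x,y)=e^{\beta t}f(x,e^{-\beta t}y)$ (still satisfying (\ref{eq6.2}) with the same $\lambda$) and $g$ by $\check g=e^{\beta t}g\in\VV'$. Writing $A(t)u=-L_tu+\beta u-\check f(\cdot,u)$ as a map $V\to V'$ (using $\check f(\cdot,u)\in\HH$ via the growth bound and $H\subset V'$), one checks that $A$ is bounded and, using the monotonicity in (\ref{eq6.2}) together with the comparability condition~(c) and $\|\cdot\|_V^2=B_1(\cdot,\cdot)$, which give $B^{(t)}(z,z)\ge c^{-1}\|z\|_V^2-c^{-1}\|z\|_H^2$, that for $\beta\ge c^{-1}+\lambda$
\[
\langle A(t)(u-v),u-v\rangle\ge c^{-1}\|u-v\|_V^2,\quad u,v\in V.
\]
Hence $A$ is monotone and coercive; demicontinuity of the induced operator $\VV\to\VV'$ follows from continuity of $y\mapsto f(x,y)$ via the standard continuity of the superposition operator on $\HH$ (if $f$ is only monotone, one first mollifies it in $y$ and passes to the limit). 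Reversing time and invoking the abstract existence theorem of Lions \cite{Lions} for $v'+A(t)v=\check g$ with monotone, coercive, hemicontinuous $A$ and $\VV'$ data, I obtain a solution $v\in\WW_T$, and hence a solution $u$ of (\ref{eq7.1}).

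For uniqueness (which also legitimises the energy estimate) I would use that the difference $w=u_1-u_2$ of two solutions satisfies, after the data $g$ cancels, $\partial_tw=-L_tw-(f(\cdot,u_1)-f(\cdot,u_2))$, whose right-hand side lies in $\VV'$; thus $w\in\WW$, and $w(T)=0$, so $w\in\WW_T$. Subtracting the two weak formulations and integrating by parts (the boundary terms vanish because $\eta(0)=0$ and $w(T)=0$) gives $-\langle\partial_tw,\eta\rangle+\BB(w,\eta)=(f(\cdot,u_1)-f(\cdot,u_2),\eta)_{\HH}$ for $\eta\in\VV$. Testing against the time-truncations $w^t_\varepsilon$ introduced before the statement and letting $\varepsilon\downarrow0$ yields
\[
\tfrac12\|w(t)\|_H^2+\int_t^TB^{(s)}(w(s),w(s))\,ds=\int_t^T(f(\cdot,u_1)-f(\cdot,u_2),w)_H\,ds\le\lambda\int_t^T\|w(s)\|_H^2\,ds.
\]
Dropping the nonnegative $\BB$-term and applying Gronwall's lemma backward in time forces $w\equiv0$.

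The step I expect to be the main obstacle is the rigorous justification of this energy identity: solutions are a priori only in $\VV$, while the weak formulation admits test functions in $\WW_0$, so one cannot simply insert $w$. The truncations $\eta^t_\varepsilon$ are designed precisely to bridge this gap, and the delicate point is controlling the time-boundary term $\tfrac12\|w(t)\|_H^2$ in the limit $\varepsilon\downarrow0$, for which the continuous embedding $\WW\hookrightarrow C([0,T];H)$ of (\ref{eq2.5}) is used. The same truncation technique underlies the a priori bounds making the existence argument fully rigorous, together with verifying that the superposition operator associated with $f$ is continuous on $\HH$.
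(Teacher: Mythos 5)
Your proof is correct, and the existence part takes a genuinely different route from the paper's. Both arguments hinge on Proposition \ref{prop5.1}, but where you substitute $w=u+g_2$ once and for all — absorbing $-\langle\partial_t\eta,g_2\rangle$ into the time-derivative term and $\BB(g_2,\cdot)$ into a $\VV'$ datum, so that a single application of Lions' theorem to the reduced monotone problem finishes the job — the paper instead regularizes the functional $g$ itself, setting $\langle\langle g^\alpha,\eta\rangle\rangle=\langle\langle g,\alpha\hat G_\alpha\eta\rangle\rangle$ so that $g^\alpha\in\VV'$ and $g_2^\alpha=\alpha\hat G_\alpha g_2\to g_2$ strongly in $\VV$, solves the approximate problems by Lions, derives uniform bounds on $u^\alpha+g_2^\alpha$ in $\VV$ and $\WW$ by testing with the truncations $(u^\alpha+g_2^\alpha)^t_\varepsilon$ and Gronwall, and then passes to the limit with a pseudomonotonicity (Minty--Browder) argument. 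Your reduction is cleaner and avoids the whole limit passage; since the paper itself invokes \cite[Theorem 6.2]{Lions} for semilinear problems with $\VV'$ data, nothing extra is being assumed. What the paper's heavier route buys is the approximating family $(u^\alpha, g^\alpha, g_2^\alpha)$ and the convergence (\ref{eq6.9}), which are reused verbatim in the proof of Proposition \ref{stw.sob15}; your argument delivers the same terminal information $u+g_2\in\WW$, $(u+g_2)(T)=\varphi$, but not that scaffolding. Two minor points: your hemicontinuity caveat (mollifying $f$ in $y$ when it is merely one-sided Lipschitz) addresses an issue present in the paper's own invocation of Lions as well, so it is not a defect of your approach; and in the final Gronwall step you should replace $\lambda$ by $\lambda^+$ (or first normalize $\lambda\le0$ as the paper does), since Gronwall requires a nonnegative coefficient. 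The uniqueness argument is essentially identical to the paper's, including the identification $w\in\WW_T$ and the use of the truncations $w^t_\varepsilon$ to justify the energy identity.
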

\begin{proof}
Uniqueness. Let $u_1,u_2\in \VV$ be solutions of (\ref{eq7.1}). Then, by (\ref{eq7.2}),
\begin{equation}
\label{eq7.3}
\langle\partial_t \eta,u\rangle+\BB(u,\eta)=(f(\cdot,u_1)-f(\cdot,u_2),\eta)_{\HH},\quad \eta\in\WW_0,
\end{equation}
where $u=u_1-u_2$. We see that $u=G(f(\cdot,u_1)-f(\cdot,u_2))$. In particular, $u\in \WW_T$.
Taking $\eta=u^t_\varepsilon$ in (\ref{eq7.3}) and letting $\varepsilon\searrow 0$ we get
\begin{align*}
\frac12\|u(t)\|^2_H+\int_t^TB^{(s)}(u(s),u(s))\,ds
&=\int_t^T(f(s,u_1(s))-f(s,u_2(s)),u(s))_H\,ds\\&\le
\lambda^+\int_t^T\|u(s)\|^2_{H}\,ds.
\end{align*}
Applying Gronwall's lemma shows that  $u=0$.

Existence. Without lost of generality we may and will assume that $\lambda\le 0$. Let $g_1,g_2$ be as in (\ref{eq3.15}). Define $g^\alpha\in\VV'$ by
\[
\langle\langle g^\alpha,\eta\rangle\rangle=\langle\langle g,\alpha
\hat G_\alpha\eta\rangle\rangle,\quad \eta\in \VV,
\]
and to simplify notation, let  $g^{\alpha}_2$ stand for  $\alpha \hat G_{\alpha}\eta$. Then
\begin{equation}
\label{eq7.4}
\langle\langle g^\alpha,\eta\rangle\rangle=\langle g_1,\alpha \hat G_\alpha\eta\rangle-\langle\partial_t\eta,g^\alpha_2\rangle,\quad \eta\in\VV.
\end{equation}
By \cite[Theorem 6.2]{Lions}, there exists $u^\alpha\in\WW'$ such that
\[
\langle\partial_t\eta,u^\alpha\rangle+\BB(u^\alpha,\eta)=(\varphi,\eta(T,\cdot))_{H}
+(f(\cdot,u^\alpha),\eta)_{\HH}+\langle\langle g^\alpha,\eta\rangle\rangle,\quad \eta\in\VV.
\]
By this and (\ref{eq7.4}), for every $\eta\in\VV$ we have
\begin{align}
\label{eq7.6}
&\langle u^\alpha+g^\alpha_2,\partial_t\eta\rangle+\BB(u^\alpha+g^\alpha_2,\eta)\nonumber\\
&\quad=(\varphi,\eta(T,\cdot))_{H}+(f(\cdot,u^\alpha),\eta)_{\HH}+\langle g_1,\alpha \hat G_\alpha \eta\rangle+\BB(g^\alpha_2,\eta).
\end{align}
Taking $\eta=(u^\alpha+g^\alpha)^t_\varepsilon$  as a test function in (\ref{eq7.6}) and letting $\varepsilon\searrow 0$  we obtain
\begin{align}
\label{eq7.7}
&\frac12\|(u^\alpha+g^\alpha_2)(t)\|^2_H+\int_t^T B^{(s)}((u^\alpha+g^\alpha_2)(s),(u^\alpha+g^\alpha_2)(s))\,ds\nonumber \\
&\quad
=\frac12\|\varphi\|^2_H+\int_t^T(f(\cdot,u^\alpha(s)),(u^\alpha+g^\alpha_2)(s))_H\,ds+\langle g_1,\alpha \hat G_\alpha (u^\alpha+g^\alpha_2)^t\rangle \nonumber \\
&\qquad+\int_t^T B^{(s)}(g^\alpha_2(s),(u^\alpha+g^\alpha_2)(s))\,ds.
\end{align}
By \cite[Proposition I.3.7]{St2},
\[
\langle g_1,\alpha \hat G_\alpha (u^\alpha+g^\alpha_2)^t\rangle
\le C\|g_1\|_{\VV'}\|u^\alpha+g^\alpha_2\|_{\VV}.
\]
By (\ref{eq6.2}),
\begin{align*}
&\int_t^T(f(\cdot,u^\alpha(s)),(u^\alpha+g^\alpha_2)(s))_H\,ds\\
&\qquad\le M\int_t^T\|(u^\alpha+g^\alpha_2)(s)\|^2_H\,ds
+M\|g^\alpha_2\|_{\HH}\|u^\alpha+g^\alpha_2\|_\HH
+\|\varrho\|_\HH\|u^\alpha+g^\alpha_2\|_\HH.
\end{align*}
Combining the above inequalities with (\ref{eq7.7}) we get
\begin{align*}
&\frac12\|(u^\alpha+g^\alpha_2)(t)\|^2_H
+\int_t^T B^{(s)}((u^\alpha+g^\alpha_2)(s),(u^\alpha+g^\alpha_2)(s))\,ds
\nonumber\\
&\quad \le
\frac12\|\varphi\|^2_H+M\int_t^T\|(u^\alpha+g^\alpha_2)(s)\|^2_H\,ds
+M\|g^\alpha_2\|_{\HH}\|u^\alpha+g^\alpha_2\|_\HH\nonumber\\
&\qquad+\|\varrho\|_\HH\|u^\alpha+g^\alpha_2\|_\HH+C\|g_1\|_{\VV'}\|u^\alpha
+g^\alpha_2\|_{\VV}+\|g^\alpha_2\|_{\VV}\|u^\alpha+g^\alpha_2\|_\VV.
\end{align*}
Applying Gronwall's lemma yields
\begin{align*}
&\|(u^\alpha+g^\alpha_2)(t)\|^2_H+\int_0^T B^{(s)}((u^\alpha+g^\alpha_2)(s),(u^\alpha+g^\alpha_2)(s))\,ds\nonumber\\
&\quad\le C(M,T)\Big(\|\varphi\|^2_H
+\|g^\alpha_2\|_{\HH}\|u^\alpha+g^\alpha_2\|_\HH
+\|\varrho\|_\HH\|u^\alpha+g^\alpha_2\|_\HH\nonumber\\
&\qquad+\|g_1\|_{\VV'}\|u^\alpha
+g^\alpha_2\|_{\VV}+\|g^\alpha_2\|_{\VV}\|u^\alpha+g^\alpha_2\|_\VV\Big).
\end{align*}
Integrating the above inequality with respect to $t$ on $[0,T]$ we get
\begin{align*}
\nonumber\|u^\alpha+g^\alpha_2\|^2_\VV&\le \tilde C(M,T)\Big(\|\varphi\|^2_H+\|g^\alpha_2\|_{\HH}\|u^\alpha
+g^\alpha_2\|_\HH+\|\varrho\|_\HH\|u^\alpha+g^\alpha_2\|_\HH\\
&\quad+\|g_1\|_{\VV'}\|u^\alpha+g^\alpha_2\|_{\VV}
+\|g^\alpha_2\|_{\VV}\|u^\alpha+g^\alpha_2\|_\VV\Big).
\end{align*}
By  Young's inequality,
\[
\|u^\alpha+g^\alpha_2\|^2_\VV
\le \hat C(M,T)\Big(\|\varphi\|^2_H+\|g^\alpha_2\|^2_{\HH}+\|\varrho\|^2_\HH
+\|g_1\|^2_{\VV'}+\|g^\alpha_2\|^2_{\VV}\Big).
\]
By \cite[Proposition I.3.7]{St2} and (\ref{eq3.15}),
\begin{equation}
\label{eq7.12}
\|u^\alpha+g^\alpha_2\|^2_\VV
\le  C'(M,T)\Big(\|\varphi\|^2_H+\|\varrho\|^2_\HH+\|g\|^2_{\WW'_0}\Big).
\end{equation}
Therefore, up to a subsequence, $\{u^\alpha+g^\alpha_2\}$  is
weakly convergent in $\VV$ to some $v\in\VV$. By \cite[Proposition
I.3.7]{St2}, $\{g^\alpha_2\}$  strongly converges in $\VV$ to
$g_2$, so
\begin{equation}
\label{eq6.9}
u^\alpha\rightarrow u\quad\mbox{weakly in  }\VV
\end{equation}
as $\alpha\rightarrow\infty$, where
$u=v-g_2$. Observe that by (\ref{eq7.6}) and (\ref{eq7.12}), the
sequence $\{u^\alpha+g^\alpha_2\}$ is bounded in $\WW$. In
particular,  we have  $u^\alpha+g^\alpha_2, u+g_2\in\WW$.
Taking $\eta^t_\varepsilon$ as a test function  in (\ref{eq7.6})
and letting $\varepsilon\searrow 0$ we conclude that
$(u^\alpha+g^\alpha_2)(T)=\varphi$. Thus (\ref{eq7.6}) may
be rewritten as
\begin{align}
\label{eq7.13} -\langle\partial_t(u^\alpha+g^\alpha_2), \eta \rangle
+\BB(u^\alpha,\eta)=(f(\cdot,u^\alpha),\eta)_{\HH}+\langle
g_1,\alpha \hat G_\alpha \eta\rangle,\quad \eta\in \VV.
\end{align}
Let $\langle\cdot,\cdot\rangle_{V',V}$ denote the duality between  $V'$ and $V$.
Taking $(u^\alpha+g^\alpha_2-\eta)^t$ in place of $\eta$  in
(\ref{eq7.13}) we get
\begin{align}
\label{eq7.14}
\nonumber&-\int_t^T\langle\partial_s
(u^\alpha+g^\alpha_2)(s), (u^\alpha+g^\alpha_2-\eta)(s) \rangle_{V',V}\,ds
+\int_t^TB^{(s)}(u^\alpha(s),(u^\alpha+g^\alpha_2-\eta)(s))\,ds\\
&\qquad=\int_t^T(f(\cdot,u^\alpha)(s),
(u^\alpha+g^\alpha_2-\eta)(s))_{H}\,ds+\langle g_1,\alpha \hat
G_\alpha (u^\alpha+g^\alpha_2-\eta)^t\rangle.
\end{align}
By \cite[Proposition I.3.7]{St2},
\begin{equation}
\label{eq7.15} \langle g_1,\alpha \hat G_{\alpha}
(u^\alpha+g^\alpha_2-\eta)^t\rangle\rightarrow \langle g_1,
(u+g_2-\eta)^t\rangle
\end{equation}
as $\alpha\rightarrow\infty$. Thanks to the monotonicity of $L_t$ and $f$, we get
by a pseudomonotonicity argument that
\begin{align}
\label{eq7.16}
&\int_t^TB^{(s)}(u(s),(u+g_2-\eta)(s))\,ds
-\int_t^T(f(\cdot,u)(s),(u+g_2-\eta)(s))_{H}\,ds\nonumber\\
&\qquad\le  \liminf_{\alpha\rightarrow \infty }
\Big(\int_t^TB^{(s)}(u^\alpha(s),(u^\alpha+g^\alpha_2-\eta)(s))\,ds
\nonumber\\
&\qquad\qquad\qquad-\int_t^T(f(\cdot,u^\alpha)(s),
(u^\alpha+g^\alpha_2-\eta)(s))_H\,ds\Big).
\end{align}
Now observe that
\begin{align}
\label{eq7.17}
&-\int_t^T\langle\partial_s
(u^\alpha+g^\alpha_2)(s), (u^\alpha+g^\alpha_2-\eta)(s) \rangle_{V',V}\,ds
\nonumber \\
&\qquad=-\frac12\|\varphi-\eta(T)\|^2_H
+\frac12\|(u^\alpha+g^\alpha_2-\eta)(t)\|^2_H\nonumber\\
&\qquad\quad-\int_t^T\langle\partial_s \eta(s),
(u^\alpha+g^\alpha_2-\eta)(s)\rangle_{V',V}\,ds.
\end{align}
Since $C([0,T];H)\subset \WW$ and $\{u^\alpha+g^\alpha_2\}$ is
bounded in $\WW$,  we  easily get $(u+g_2)(T)=\varphi$. This when
combined with (\ref{eq7.17}) and the proved convergences shows
that for every bounded positive measurable function  $\psi$  on
$[0,T]$,
\begin{align*}
&\int_0^T\psi(t)\int_t^T\langle-\partial_s(u+g_2)(s),
(u+g_2-\eta)(s) \rangle_{V',V}\,ds\,dt\nonumber \\
&\qquad\le \liminf_{\alpha\rightarrow \infty}
\int_0^T\psi(t)\int_t^T\langle -\partial_s
(u^\alpha+g^\alpha_2)(s), (u^\alpha+g^\alpha_2-\eta)(s) \rangle_{V',V}\,ds\,dt.
\end{align*}
This when combined with (\ref{eq7.14})--(\ref{eq7.16}) shows that
\begin{align*}
&\int_0^T\psi(t)\int_t^T\langle-\partial_s(u+g_2)(s), \eta(s)
\rangle_{V',V}\,ds\,dt+\int_0^T\psi(t)\int_t^TB^{(s)}(u(s),\eta(s))\,ds\,dt\\
&\qquad\le\int_0^T\psi(t)\int_t^T(f(\cdot,u)(s),\eta(s))_H\,ds\,dt
+\int_0^T\psi(t)\langle g_1,\eta^t\rangle_{V',V}\,dt
\end{align*}
for $\eta\in \VV$. From this we get
\[
-\langle\partial_t(u+g_2),\eta\rangle +\BB(u,\eta)=(f(\cdot,u),\eta)_{\HH}+\langle g_1,
\eta\rangle,\quad \eta\in \VV. \] Since we know that
$(u+g_2)(T)=\varphi$, the above equation implies (\ref{eq7.2}).
\end{proof}

In the proposition below, we provide a stochastic representation of the solution of (\ref{eq7.1}) with $f=0$ and $g\in\WW'_0\cap\MM_{0,b}(E_{0,T})$. It will be needed in Section \ref{sec8}.

\begin{proposition}
\label{stw.sob15} Let $g\in\WW'_0$, $\varphi\in H$. Assume that
there  exist $\mu\in\MM_{0,b}(E_{0,T})$ such that
\[
\langle\langle g,\eta\rangle\rangle=\int_{E_{0,T}}\tilde\eta\,d\mu
\]
for every bounded $\eta\in\WW_0$. Let $u$ be a solution of
\mbox{\rm(\ref{eq7.1})} with $f=0$. Then for $m_1$-a.e. $x\in
E_{0,T}$,
\[
u(x)=E_x\Big(\varphi(X^0_{T})+\int_0^{\zeta}\,dA^\mu_t\Big).
\]
\end{proposition}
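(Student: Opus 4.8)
The plan is to test the weak formulation of \eqref{eq7.1} against the $0$-order coresolvent kernels $\hat G_0 h$ and to read off the two terms of the claimed representation probabilistically. Since Proposition \ref{prop4.1} and Lemma \ref{lem4.2} are stated for positive measures, I will apply them to $\mu^\pm$ and set $R_0\mu:=R_0\mu^+-R_0\mu^-$ and $A^\mu:=A^{\mu^+}-A^{\mu^-}$, so that $R_0\mu(x)=E_x\int_0^\zeta dA^\mu_t$; note $\mu^\pm\in\MM_{0,b}(E_{0,T})\subset S(E_{0,T})$, so these potentials are well defined. With $f=0$ the defining identity \eqref{eq7.2}, together with the hypothesis and the second form of $\EE$ in \eqref{eq2.23}, reads $\EE(u,\eta)=(\varphi,\eta(T,\cdot))_H+\int_{E_{0,T}}\tilde\eta\,d\mu$ for every bounded $\eta\in\WW_0$. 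I would take $\eta=\hat G_0 h$ with $h\in\BB_b^+(E_{0,T})\cap\HH$; by \eqref{eq2.30} one has $\zeta\le\hat\upsilon(0)\le T$ under $\hat P_x$, so $\hat G_0 h=\hat R_0 h$ is bounded and lies in $\WW_0$, hence is an admissible test function.

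Evaluating the two sides is then routine. For the left side, the $\alpha=0$ form of the dual resolvent identity \eqref{eq2.26} (legitimate by \eqref{eq2.26cc} and the remark following it) gives $\EE(u,\hat G_0 h)=(u,h)_\HH$. For the measure term, $\widetilde{\hat G_0 h}=\hat R_0 h$ and Lemma \ref{lem4.2} with $\alpha=0$ yields $\int_{E_{0,T}}\hat R_0 h\,d\mu=\int_{E_{0,T}}h\,R_0\mu\,dm_1=(R_0\mu,h)_\HH$. Thus
\[
(u,h)_\HH=(\varphi,(\hat G_0 h)(T))_H+(R_0\mu,h)_\HH ,
\]
and it remains to identify the first term on the right as $(w_1,h)_\HH$, where $w_1(x):=E_x\varphi(X^0_T)$.

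The heart of the proof — and the step I expect to be the main obstacle — is the identity $(\varphi,(\hat G_0 h)(T))_H=(w_1,h)_\HH$, i.e. that $E_\cdot\varphi(X^0_T)$ represents the solution of the homogeneous Cauchy problem. To establish it I would approximate the terminal datum by an interior source concentrated near $t=T$: put $f_\varepsilon(s,y)=\varepsilon^{-1}\mathbf{1}_{(T-\varepsilon,T)}(s)\varphi(y)$. Then $R_0 f_\varepsilon=G_0 f_\varepsilon$ $m_1$-a.e. by \eqref{eq2.26cc}, and the adjointness $(G_0 f_\varepsilon,h)_\HH=(f_\varepsilon,\hat G_0 h)_\HH$ combined with $\hat G_0 h\in\WW_0\subset C([0,T];H)$ gives $(f_\varepsilon,\hat G_0 h)_\HH=\varepsilon^{-1}\int_{T-\varepsilon}^T(\varphi,(\hat G_0 h)(s))_H\,ds\to(\varphi,(\hat G_0 h)(T))_H$. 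On the other hand, using $X_t=(\upsilon(t),X^0_{\upsilon(t)})$ and $\zeta\le T-\upsilon(0)$ from \eqref{eq8.c14}, a change of variable shows $R_0 f_\varepsilon(x)=E_x\,\varepsilon^{-1}\int_{T-\varepsilon}^T\varphi(X^0_r)\mathbf{1}_{\{r<\upsilon(\zeta)\}}\,dr$, which tends to $w_1(x)$. The delicate point is precisely this last convergence: the meaning of $\varphi(X^0_T)$ through the left limits of the space--time Hunt process at the terminal time, and the $m_1$-a.e. convergence of the time averages for merely bounded measurable $\varphi$. I would settle it first for bounded $\varphi$, using the regularity of $\BX$ and a Lebesgue-point argument along the paths, and then pass to general $\varphi\in H$ by density, noting that $\varphi\mapsto E_\cdot\varphi(X^0_T)$ is bounded from $H$ into $\HH$ (which follows from the identity itself, since $|(\varphi,(\hat G_0 h)(T))_H|\le C\|\varphi\|_H\|h\|_\HH$).

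Combining the two displays gives $(u,h)_\HH=(w_1+R_0\mu,h)_\HH$ for every $h\in\BB_b^+(E_{0,T})\cap\HH$. As $u,w_1\in\HH$, the functional $h\mapsto\int_{E_{0,T}}h\,R_0\mu\,dm_1=(u-w_1,h)_\HH$ is bounded on $\HH$, so $R_0\mu=u-w_1\in\HH$, and since such $h$ are total in $\HH$ we conclude
\[
u(x)=E_x\Big(\varphi(X^0_T)+\int_0^\zeta dA^\mu_t\Big)\quad\text{for }m_1\text{-a.e. }x,
\]
which is the assertion; the passage from $\mu^\pm$ back to the signed $\mu$ is immediate from $A^\mu=A^{\mu^+}-A^{\mu^-}$.
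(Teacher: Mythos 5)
Your handling of the measure term is correct but genuinely different from the paper's. The paper does not test against $\hat G_0h$; instead it reuses the approximation scheme from the proof of Theorem \ref{tw.sob14}: with $\mu_\alpha=\alpha\hat R_\alpha\circ\mu$ it shows $(u^\alpha,\eta)_{\HH}=\int_{E_{0,T}}\eta\, R\mu_\alpha\,dm_1$ for bounded $\eta\in\WW_0$, and then passes to the limit using the weak convergence $u^\alpha\rightarrow u$ in $\VV$ from \eqref{eq6.9} together with $\int_{E_{0,T}}\eta\, R\mu_\alpha\,dm_1=\int_{E_{0,T}}R\mu\cdot\alpha R_\alpha\eta\,dm_1\rightarrow\int_{E_{0,T}}\eta\, R\mu\,dm_1$. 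Your direct duality computation $\EE(u,\hat G_0h)=(u,h)_{\HH}$ combined with Lemma \ref{lem4.2} at $\alpha=0$ reaches the same identity without going back through the construction of $u^\alpha$, which is cleaner and more self-contained; the ingredients you need ($\hat G_0h\in\WW_0$, boundedness of $\hat R_0h$ via \eqref{eq2.30}, and the $\alpha=0$ form of \eqref{eq2.26}) are all prepared in the paper around \eqref{eq2.26cc}. The splitting $\mu=\mu^+-\mu^-$ and the $m_1$-a.e.\ finiteness of $R_0\mu^{\pm}$ (from $\hat R_0h\le T\|h\|_\infty$ and $|\mu|(E_{0,T})<\infty$) are also fine.

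The gap is in the terminal-datum step, which is precisely the step the paper does not prove but imports: it invokes \cite[Theorem 3.7]{K:JFA} for the fact that $w_1=E_\cdot\varphi(X^0_T)$ solves the homogeneous Cauchy problem, and then reduces to $\varphi=0$ by subtracting $w_1$. Your replacement argument fails at the pathwise Lebesgue-point step: for a merely bounded Borel $\varphi$ and a fixed path, $\varepsilon^{-1}\int_{T-\varepsilon}^{T}\varphi(X^0_r)\,dr$ need not converge to $\varphi(X^0_T)$, because $T$ is a \emph{fixed} time and nothing forces it to be a Lebesgue point of $r\mapsto\varphi(X^0_r(\omega))$; right-continuity of the path does not help when $\varphi$ is discontinuous, and the subsequent density argument in $\varphi$ does not rescue an a.s.\ statement that is false for bounded $\varphi$. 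The repair is to take the expectation before the limit: for $x=(s,x^0)$ one has $R_0f_\varepsilon(x)=\varepsilon^{-1}\int_{T-\varepsilon}^{T}T_{s,r}\varphi(x^0)\,dr$, where $(T_{s,r})$ is the evolution family of Section \ref{sec6.5}, and the strong continuity of $r\mapsto T_{s,r}\varphi$ in $H$ up to $r=T$ gives convergence to $T_{s,T}\varphi=E_\cdot\varphi(X^0_T)$ in $\HH$; alternatively, simply quote \cite[Theorem 3.7]{K:JFA} as the paper does. With that substitution your proof closes.
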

\begin{proof}
We  adopt the  notation from   the proof of Theorem
\ref{tw.sob14}. We first assume that $\varphi=0$. Let
$\mu_\alpha=\alpha \hat R_\alpha\circ\mu$. By (\ref{eq2.2}) and
the definition of a solution of (\ref{eq7.1}),
\[
\int_{E_{0,T}}\eta R\mu_\alpha\,dm_1=\int_{E_{0,T}}\hat R\eta\,d\mu_\alpha
=\int_{E_{0,T}}\alpha R_\alpha(\hat R\eta)\,d\mu= \langle\langle
g^\alpha,\hat R\eta\rangle\rangle=(u^\alpha,\eta)_{\HH}
\]
for every bounded $\eta\in\WW_0$. By (\ref{eq6.9}), for every $\eta\in\HH$ we have
\[
\lim_{\alpha\rightarrow\infty}(u^\alpha,\eta)_{\HH}=(u,\eta)_{\HH},
\]
whereas by (\ref{eq2.2}), for every $\eta\in\WW_0\cap C_b(E_{0,T})$,
\[
\lim_{\alpha\rightarrow\infty}\int_{E_{0,T}}\eta R\mu_\alpha\,dm_1=\lim_{\alpha\rightarrow\infty}\int_{E_{0,T}}R\mu\cdot\alpha R_\alpha\eta\,dm_1=\int_{E_{0,T}}\eta R\mu\,dm_1\,.
\]
Therefore $u=R\mu$ $m_1$-a.e. In the general case, we put
$w(x)=E_x\varphi(X^0_{T})$, $x\in E_{0,T}$. By \cite[Theorem
3.7]{K:JFA}, $w$  is a solution of (\ref{eq7.1}) with
$f=0,g=0$. Hence $v=u-w$ is a solution of
(\ref{eq7.1}) with $f=0$ and $\varphi=0$. By what has
been already proved, $v=R\mu$ $m_1$-a.e. Consequently, $u=w+R\mu$
$m_1$-a.e.
\end{proof}

\section{Further properties of $g_2$}
\label{sec6.5}

We know from Theorem \ref{th3.2} that each $\mu\in\MM_{0,b}(E_{0,T})$ admits decomposition of the form
\begin{equation}
\label{eq1.4narr} \mu=f\cdot m_1+g_1+\partial_tg_2
\end{equation}
with $f\in L^1(E_{0,T};m_1)$, $g_1\in\VV'$ and $g_2\in\VV$.
In this section, we prove some further regularity results for  $g_2$.

In the sequel, we denote by  $D([0,T];H)$  the set consisting of all functions $u\in\HH$ having an $m_1$-version
$\tilde u$ such that $[0,T]\ni t \mapsto\tilde u(t)\in H$ is c\`adl\`ag, i.e. right-continuous with left limits. Of course, $C([0,T];H)\subset D([0,T];H)$.

\begin{proposition}
Let $\mu\in \MM_{0,b}(E_{0,T})$ and let $(f,g_1,g_2)$ be a decomposition of $\mu$.
\begin{enumerate}[\rm(i)]
\item  $g_2\in D([0,T];H)$ and $g_2(T-)=0$.
\item For $t\in(0,T)$, let $\mu_t$ be the measure defined as $\mu_t(B)=\mu(\{t\}\times B)$, $B\in\BB(E)$. Then
\begin{equation}
\label{eq5.djp1}
\mu_t=(g_2(t-)-g_2(t))\cdot m.
\end{equation}
\end{enumerate}
\end{proposition}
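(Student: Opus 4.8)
The plan is to read off the time-regularity of $g_2$ from scalar test functions, and then to upgrade it to a statement in $H$ by means of the variational solution of the associated linear Cauchy problem.

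First I would test the decomposition identity (\ref{eq5.12}) against products $v(t,x)=\theta(t)\phi(x)$, where $\theta\in C_c^\infty(0,T)$ and $\phi\in V$ is bounded and quasi-continuous; such $v$ is a bounded element of $\WW_0$ with $\partial_tv=\theta'\phi$. Writing $\mu^\phi$ for the finite signed measure on $(0,T)$ given by $\mu^\phi(A)=\int_{A\times E}\tilde\phi\,d\mu$, and setting $F_\phi(t)=\int_Ef(t,\cdot)\phi\,dm\in L^1(0,T)$, $H_\phi(t)=\langle g_1(t),\phi\rangle\in L^2(0,T)$ and $G_\phi(t)=(g_2(t),\phi)_H$, the identity (\ref{eq5.12}) becomes
\[
\int_0^T\theta\,d\mu^\phi=\int_0^T\theta F_\phi\,dt+\int_0^T\theta H_\phi\,dt-\int_0^T\theta'G_\phi\,dt,\qquad\theta\in C_c^\infty(0,T).
\]
This says exactly that the distributional derivative of $G_\phi$ on $(0,T)$ is the bounded measure $\mu^\phi-F_\phi\,dt-H_\phi\,dt$. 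Hence each $G_\phi$ is of bounded variation, has a c\`adl\`ag version, and its jump at $t$ equals the atom of its derivative, i.e. $\mu^\phi(\{t\})=\int_E\tilde\phi\,d\mu_t$, since the absolutely continuous parts $F_\phi\,dt$ and $H_\phi\,dt$ contribute no atoms. This already identifies the jumps of the scalar sections of $g_2$ with the time-slices $\mu_t$ of $\mu$ and shows that $\mu_t$ has a density in $H$, so in particular $\mu_t\ll m$; once the $H$-valued version of (i) is in hand, this yields (\ref{eq5.djp1}).

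Next I would prove (i), namely that $g_2$ itself (not merely its scalar projections) admits an $H$-valued c\`adl\`ag version with $g_2(T-)=0$. The device is to strip off the $L^1$-part and work with the smooth measure $\nu:=\mu-f\cdot m_1\in\MM_{0,b}(E_{0,T})$, whose decomposition is $(0,g_1,g_2)$ with the same $g_2$; the corresponding functional $g=g_1+\partial_tg_2\in\WW'_0$ satisfies $\langle\langle g,\eta\rangle\rangle=\int_{E_{0,T}}\tilde\eta\,d\nu$ for bounded $\eta\in\WW_0$. Solving the linear problem $-\partial_tu-L_tu=g$, $u(T,\cdot)=0$ with zero nonlinearity (Theorem \ref{tw.sob14}), the proof of that theorem gives $w:=u+g_2\in\WW\subset C([0,T];H)$ with $w(T)=0$; thus $g_2=w-u$, and it suffices to show $u\in D([0,T];H)$ and $u(T-)=0$. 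For this I would use the stochastic representation of Proposition \ref{stw.sob15} together with Proposition \ref{prop4.1}: after reducing to positive $\nu$ by its Hahn decomposition (each part lying in $\MM_{0,b}(E_{0,T})$, the solution map being linear), $u$ is the potential $u(x)=E_x\int_0^\zeta dA^\nu_t$ of the natural additive functional $A^\nu$ in Revuz correspondence with $\nu$. Since $A^\nu$ has c\`adl\`ag trajectories and, by (\ref{eq8.c14}), $\zeta\le T-\upsilon(0)$ so that $\zeta\downarrow0$ as the time coordinate tends to $T$, the time-sections $t\mapsto u(t,\cdot)\in H$ are c\`adl\`ag with $u(T-)=0$. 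Combining this with $w\in C([0,T];H)$ and $w(T)=0$ gives $g_2=w-u\in D([0,T];H)$ and $g_2(T-)=w(T)-u(T-)=0$, proving (i); feeding the $H$-valued version back into the first step then yields (\ref{eq5.djp1}).

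The main obstacle is exactly this last point: promoting the scalar bounded-variation information into genuine strong $H$-valued c\`adl\`ag regularity of the sections $t\mapsto u(t,\cdot)$. The delicate feature is that the full solution of $-\partial_tu-L_tu=\mu$ with only $f\in L^1$ would enjoy merely $C([0,T];L^1)$-type time continuity, which is why one must pass to the purely $\WW'_0$ datum $g=g_1+\partial_tg_2$ (equivalently, to $\nu=\mu-f\cdot m_1$) and transport the path regularity of the additive functional $A^\nu$, its right-continuity with left limits, to the potential $u$. Everything else, namely the integration by parts in the first step and the bounded-variation conclusion, is routine.
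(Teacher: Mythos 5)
Your overall architecture parallels the paper's: both proofs write $g_2$ as (an element of $\WW_T\subset C([0,T];H)$) minus (the potential $u$ of a smooth measure) and reduce everything to showing that $u\in D([0,T];H)$ with $u(T-)=0$; your step with $v=\theta\phi$ is essentially the paper's ramp--test-function computation for part (ii), and your reduction to $\nu=\mu-f\cdot m_1$ is, if anything, cleaner than the paper's (which tacitly uses $f\in L^2(E_{0,T};m_1)$ so that Lemma \ref{lem2.3} applies to $\mu$ itself). The genuine gap is exactly at the point you flag as ``the main obstacle'' and then dispose of in one sentence: you assert that, because $A^\nu$ has c\`adl\`ag trajectories and $\zeta\le T-\upsilon(0)$, the time sections $t\mapsto u(t,\cdot)=E_{t,\cdot}A^{\nu}_{\zeta}$ are c\`adl\`ag in $H$ with limit $0$ at $T$. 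This does not follow. Pathwise right-continuity of $t\mapsto A^\nu_t(\omega)$ is regularity along a single trajectory of the space--time process, whereas what is needed is regularity of $s\mapsto u(s,\cdot)$ in the norm of $H=L^2(E;m)$, i.e.\ of the family of expectations $E_{s,x^0}A^\nu_\zeta$ as the \emph{starting time} varies. Passing from one to the other requires the Markov property (roughly $u(s,\cdot)=T_{s,s+h}\,u(s+h,\cdot)+E_{s,\cdot}A^\nu_h$), strong continuity of the two-parameter semigroup on $H$, an argument that $E_{s,\cdot}A^\nu_h\rightarrow0$ in $H$ (not merely pathwise) as $h\downarrow0$, and a separate argument for the existence of left limits. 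None of this is automatic for a general bounded smooth $\nu$; it is precisely the content of the results the paper invokes here (\cite[Proposition 3.2, Theorem 3.5]{K:JFA}), and it is the nontrivial core of part (i). The same defect affects $u(T-)=0$: ``$\zeta\downarrow0$'' gives $A^\nu_\zeta\rightarrow0$ pathwise, but the claim is that $\|u(s,\cdot)\|_H\rightarrow0$, which the paper instead extracts from the ramp identity \emph{after} the existence of the limit $u(T-)$ in $H$ is already secured.

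A smaller point: your step 1 yields $\mu^\phi(\{t\})=G_\phi(t)-G_\phi(t-)$, i.e.\ $\mu_t=(g_2(t)-g_2(t-))\cdot m$. This agrees with the formula stated in the paper's introduction but has the opposite sign to the display in the proposition (the paper is internally inconsistent on this point); you should pin down which difference ``the jump'' denotes rather than leave it implicit. With that caveat, once (i) is available your step 1 does correctly identify $\mu_t$, since $V\cap C_c(E)$ is dense in $C_c(E)$ by regularity of the forms and therefore determines the measure $\mu_t$.
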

\begin{proof}
By Lemma \ref{lem2.3},  there exists a unique $u\in\VV$ such that
\begin{equation}
\label{eq.l1}
\EE(u,\eta)=\int_{E_{0,T}}\tilde\eta\,d\mu,\quad\eta\in \WW_0.
\end{equation}
Since $\mu\in S_0(E_{0,T})$, we have $f\in L^2(E_{0,T};m_1)$.
Since $C_c(E_{0,T})\cap \WW_0$ is dense in $\WW_T$,
\begin{equation}
\label{eq.l2}
\EE(u-g_2,\eta)=\chi(\eta),\quad \eta\in \WW_0,
\end{equation}
where $\chi(\eta)=-\BB(g_2,\eta)+(f,\eta)_{\HH}+\langle\eta,g_1\rangle$.
From this we conclude that $u-g_2\in\WW_T\subset C([0,T];H)$. By
\cite[Proposition 3.2, Theorem 3.5]{K:JFA},  $u\in D([0,T];H)$. Hence
$g_2\in D([0,T];H)$. For $t\in [0,T)$, we set
\[
\rho^t_\varepsilon(s)=0,\, s\in
[0,t-\varepsilon],\quad\rho^t_\varepsilon(s)
=\varepsilon^{-1}(s-t+\varepsilon),\, s\in [t-\varepsilon,t] \quad
\rho^t_\varepsilon(s)=1,\, s\in [t,T].
\]
Let $\xi\in V$ and $\eta^t_\varepsilon=\rho^t_\varepsilon\xi$.
Because $u\in D([0,T];H)$ and $u-g_2\in C([0,T];H)$, replacing
$\eta$ by  $\eta^t_\varepsilon$ in (\ref{eq.l1}) and
(\ref{eq.l2}),  and then letting  $\varepsilon\downarrow 0$ and
$t\uparrow T$ shows that  $u(T-)=0$ and $(u-g_2)(T)=0$. Hence
$g_2(T-)=0$. Taking $\eta^t_\varepsilon$ as a test function in (\ref{eq5.12})
and letting $\varepsilon\downarrow 0$ we obtain
\begin{equation}
\label{eq5.1213ur}
\int_{[t,T)\times E}\xi\,d\mu=\int_{[t,T)\times E}f\xi\,dm_1+\langle \mathbf{1}_{[t,T)\times E}\,g_1,v\rangle-(\xi,g_2(t-))_H,
\end{equation}
from which we conclude that (\ref{eq5.djp1}) is satisfied.
\end{proof}

\begin{lemma}
\label{lm.wtwt5} Assume that $\varphi\in L^1(E;m)$, $f\in
L^1(E_{0,T})$,  and  let $\rho$ be a Borel function on
$E$ such that $0\le\rho\le 1$ and $\int_E\rho\,dm<\infty$. Define
\[
u(x)=E_x\Big(\varphi(X^0_{T})+\int_0^{\zeta}f(X_t)\,dt\Big),
\quad x\in E_{0,T}.
\]
Then $u\in C([0,T];L^1(E;\rho\cdot m))$.
\end{lemma}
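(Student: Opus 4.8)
The plan is to reduce the claim to the case of bounded data, for which continuity already follows from the $\WW$-theory, and then to pass to the limit using a uniform $L^1$-estimate. Throughout I write $x=(s,x^0)$ and use that under $P_x$ one has $X_t=(s+t,X^0_{s+t})$ and $\zeta\le T-s$ (see (\ref{eq2.8}) and (\ref{eq8.c14})); hence, after the change of variable $r=s+t$ and with the convention $f(\Delta)=\varphi(\Delta)=0$,
\[
u(s,x^0)=E_{(s,x^0)}\big[\varphi(X^0_T)\big]+E_{(s,x^0)}\int_s^T f(r,X^0_r)\,dr .
\]
I introduce the (sub-Markovian, time-inhomogeneous) spatial transition operators $Q_{s,r}k(x^0)=E_{(s,x^0)}[k(X^0_r)]$ for $0\le s\le r\le T$, together with the analogous operators $\hat Q_{r,s}$ of the dual process $\hat\BX$. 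I approximate by the truncations $\varphi_n=(\varphi\wedge n)\vee(-n)$ and $f_n=(f\wedge n)\vee(-n)$, and let $u_n$ be defined by the same formula with $(\varphi_n,f_n)$ in place of $(\varphi,f)$.

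First I would prove the uniform bound: for $\psi\in L^1(E;m)$ and $h\in L^1(E_{0,T};m_1)$,
\[
\sup_{s\in[0,T]}\Big\|\,E_{(s,\cdot)}[|\psi|(X^0_T)]+E_{(s,\cdot)}\!\int_s^T|h|(r,X^0_r)\,dr\,\Big\|_{L^1(E;\rho\cdot m)}\le\|\psi\|_{L^1(E;m)}+\|h\|_{L^1(E_{0,T};m_1)} .
\]
By Tonelli's theorem the left-hand norm equals $\int_E(Q_{s,T}|\psi|)\rho\,dm+\int_s^T\!\int_E(Q_{s,r}|h|(r,\cdot))\rho\,dm\,dr$. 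Here the crucial inequality is the $L^1(E;m)$-contraction of $Q_{s,r}$ on positive functions: using the $m$-duality of the spatial kernels of $\BX$ and $\hat\BX$ and the sub-Markovianity $\hat Q_{r,s}1\le1$, one gets $\int_E Q_{s,r}k\,dm=\int_E k\,\hat Q_{r,s}1\,dm\le\int_E k\,dm$ for $k\ge0$, whence, since $0\le\rho\le1$, also $\int_E(Q_{s,r}k)\rho\,dm\le\int_E k\,dm$. This yields the displayed bound. Applying it to $\psi=\varphi-\varphi_n$ and $h=f-f_n$ and using linearity of the Feynman--Kac formula gives
\[
\sup_{s\in[0,T]}\|u(s)-u_n(s)\|_{L^1(E;\rho\cdot m)}\le\|\varphi-\varphi_n\|_{L^1(E;m)}+\|f-f_n\|_{L^1(E_{0,T};m_1)} .
\]

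Next I would treat the bounded data. Since $|\varphi_n|^2\le n|\varphi|$ and $|f_n|^2\le n|f|$, we have $\varphi_n\in H=L^2(E;m)$ and $f_n\in\HH$. Then $E_{(\cdot)}\int_0^\zeta f_n(X_t)\,dt=G_0f_n\in\WW_T\subset C([0,T];H)$ by (\ref{eq2.26cc}) and $G_\alpha(\HH)\subset\WW_T$, while $w_n(s,x^0)=E_{(s,x^0)}[\varphi_n(X^0_T)]$ is the solution of the backward Cauchy problem with $L^2$-terminal datum and no source, hence lies in $\WW\subset C([0,T];H)$ by \cite[Proposition 3.2, Theorem 3.5]{K:JFA}. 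Thus $u_n\in C([0,T];H)$. Because $\int_E|k|\rho\,dm\le(\int_E\rho\,dm)^{1/2}\|k\|_{L^2(E;m)}$ (Cauchy--Schwarz together with $\rho\le1$), the embedding $L^2(E;m)\hookrightarrow L^1(E;\rho\cdot m)$ is continuous, so $u_n\in C([0,T];L^1(E;\rho\cdot m))$.

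Finally, $\varphi_n\to\varphi$ in $L^1(E;m)$ and $f_n\to f$ in $L^1(E_{0,T};m_1)$ by dominated convergence, so the uniform estimate of the second paragraph shows $u_n\to u$ uniformly on $[0,T]$ in $L^1(E;\rho\cdot m)$; since $C([0,T];L^1(E;\rho\cdot m))$ is complete, the limit $u$ is continuous, which is the assertion. The main obstacle is the $L^1(E;m)$-contraction of the inhomogeneous spatial semigroup used in the key estimate: it rests on the sub-Markovianity of both $\BX$ and its dual $\hat\BX$ (equivalently, of the forms $B^{(t)}$ and $\hat B^{(t)}$) and on the $m$-duality of the spatial transition kernels furnished by the construction of $\hat\BX$ (Oshima \cite{O1,O3}, Stannat \cite{St2}); some care is also needed to identify the probabilistic representation $u_n$ with the variational solution in order to invoke its $C([0,T];H)$-regularity.
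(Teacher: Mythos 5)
Your proposal is correct and follows essentially the same route as the paper: approximate $(\varphi,f)$ by $L^2$ data so that the corresponding Feynman--Kac functions lie in $\WW\subset C([0,T];H)\subset C([0,T];L^1(E;\rho\cdot m))$, establish the uniform-in-time $L^1(E;\rho\cdot m)$ bound by the duality identity $\int_E Q_{s,r}k\,dm=\int_E k\,\hat Q_{r,s}1\,dm\le\int_E k\,dm$ (the paper writes this as $(|T_{s,t}g|,1)_H=(|g|,\hat T_{s,t}1)_H$), and conclude by uniform convergence. The only cosmetic differences are that you use explicit truncations and spell out the embedding $L^2(E;m)\hookrightarrow L^1(E;\rho\cdot m)$, which the paper leaves implicit.
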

\begin{proof}
Choose  $\{\varphi_n\}\subset H\cap L^1(E;m)$, $\{f_n\}\subset
\HH\cap L^1(E_{0,T};m_1)$ so  that
$\|\varphi_n-\varphi\|_{L^1(E;m)}+\|f_n-f\|_{L^1(E_{0,T};m_1)}\rightarrow
0$ as $n\rightarrow\infty$. By Proposition \ref{stw.sob15}, the
function
\[
u_n(x)= E_x\Big(\varphi_n(X^0_{T})
+\int_0^{\zeta}f_n(X_t)\,dt\Big),\quad x\in E_{0,T},
\]
is a unique solution of the Cauchy problem
\[
-\partial_t u_n-L_tu_n=f_n,\qquad u_n(T)=\varphi_n.
\]
In particular,  $u_n\in\WW\subset C([0,T];H)\subset
C([0,T];L^1(E;\rho\cdot m))$. Let $(T_{s,t})_{t>s}$ (resp. $(\hat T_{s,t})_{t>s}$) denote the semigroup determined by the form $B^{(s)}$ (resp. dual form $\hat B^{(s)}$). We have
%\begin{align*}
%\int_E E_{s,x^0}\int_0^{\zeta_\upsilon}|(f_n-f)(X_t)|\,dt\,m(dx^0)&\le
%\int_s^T\int_{E_{0,T}}|T_{s,t}(f_n-f)(t,x^0)|\,m(dx^0)\,dt\\
%&= \int_s^T\int_{E_{0,T}} (|(f_n-f)(t)|,\hat T_{s,t}1)\,dt\\
%&\le \|f_n-f\|_{L^1(E_{0,T};m_1)}
%\end{align*}
\begin{align*}
\int_E E_{s,x^0}\int_0^{\zeta}|(f_n-f)(X_t)|\,dt\,m(dx^0)&\le
\int_s^T (|T_{s,t}(f_n-f)(t)|,1)_H\,dt\\
&= \int_s^T (|(f_n-f)(t)|,\hat T_{s,t}1)_H\,dt\\
&\le \|f_n-f\|_{L^1(E_{0,T};m_1)}
\end{align*}
and
\begin{align*}
\int_{E}E_{s,x^0}|(\varphi_n-\varphi)(X^0_T)|\,m(dx^0)
&=(T_{s,T}|\varphi-\varphi_n|,1)_H\\
&=(|\varphi-\varphi_n|,\hat T_{s,T}1)_H
\le \|\varphi-\varphi_n\|_{L^1(E;m)}.
\end{align*}
Hence
\[
\sup_{t \in [0,T]}\|u_n(t)-u(t)\|_{L^1(E;\rho\cdot m)} \le
\|\varphi-\varphi_n\|_{L^1(E;m)}+ \|f_n-f\|_{L^1(E_{0,T};m_1)}.
\]
Since $u_n \in C([0,T];L^1(E;\rho\cdot m))$, $n\ge1$, this proves the lemma.
\end{proof}

The following corollary extends \cite[Lemma 2.29]{DPP}.

\begin{corollary}
Let $\mu\in\MM_{0,b}(E_{0,T})$ and  $(f,g_1,g_2)$, $(\bar f,\bar g_1,\bar
g_2)$ be two decompositions of $\mu$.  Then $g_2-\bar g_2\in
C([0,T];L^1(E;\rho\cdot m))$ for any function $\rho$ on $E$ such that
$0\le\rho\le 1$ and $\int_E\rho\,dm=1$.
\end{corollary}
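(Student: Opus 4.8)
The plan is to show that the difference $h:=g_2-\bar g_2\in\VV$ is a weak solution of a backward Cauchy problem whose right-hand side splits into an $L^1$ part and a part in $\VV'$, and then to read off the two contributions separately from Lemma \ref{lm.wtwt5} and from the classical variational theory. Writing both decompositions of $\mu$ in the form (\ref{eq5.12}) for a bounded $v\in\WW_0$ and subtracting, the terms $\int_{E_{0,T}}\tilde v\,d\mu$ cancel and I obtain
\[
\langle\partial_t v,h\rangle=\int_{E_{0,T}}Fv\,dm_1+\langle G_1,v\rangle,\qquad v\in\WW_0\text{ bounded},
\]
with $F:=f-\bar f\in L^1(E_{0,T};m_1)$, $G_1:=g_1-\bar g_1\in\VV'$ and $h\in\VV$. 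Since $h\in\VV$, assumptions (a), (c) on the forms show that $v\mapsto\BB(h,v)$ defines an element of $\VV'$, so adding $\BB(h,v)$ to both sides turns the identity into
\[
\langle\partial_t v,h\rangle+\BB(h,v)=\int_{E_{0,T}}Fv\,dm_1+\langle\tilde G_1,v\rangle,\qquad v\in\WW_0\text{ bounded},
\]
where $\tilde G_1:=G_1+\BB(h,\cdot)\in\VV'$. In other words $h$ solves (\ref{eq7.2}) with $f\equiv0$, $\varphi=0$ and right-hand side $g=F\cdot m_1+\tilde G_1$, the terminal condition $h(T-)=0$ holding because $g_2(T-)=\bar g_2(T-)=0$ for the two decompositions.

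Next I would split $h$ according to the two parts of the right-hand side. Let $q\in\VV$ be the solution of (\ref{eq7.1}) with $f\equiv0$, $\varphi=0$, $g=\tilde G_1$; this exists and is unique by Theorem \ref{tw.sob14}, and since the datum $\tilde G_1$ lies in $\VV'$ the classical variational theory (cf.\ \cite[Theorem 6.2]{Lions}) yields the stronger regularity $q\in\WW_T\subset C([0,T];H)$. Then $z:=h-q\in\VV$ solves, for every bounded $v\in\WW_0$,
\[
\langle\partial_t v,z\rangle+\BB(z,v)=\int_{E_{0,T}}Fv\,dm_1,\qquad z(T-)=0.
\]

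The heart of the argument is to identify $z$ with the Feynman--Kac functional $x\mapsto E_x\int_0^\zeta F(X_t)\,dt$. When $F\in L^2$ this is exactly Proposition \ref{stw.sob15} applied to the bounded smooth measure $F\cdot m_1$ (which then belongs to $\WW'_0$, since $F\in\HH\subset\VV'$), together with the fact that the additive functional of $F\cdot m_1$ is $\int_0^t F(X_r)\,dr$. For a general $F\in L^1$ I would approximate $F$ by its truncations $F_n:=(-n)\vee F\wedge n\in L^2\cap L^1$, which converge to $F$ in $L^1$; applying the $L^2$-representation to each $F_n$, passing to the limit by means of the contraction estimate contained in the proof of Lemma \ref{lm.wtwt5}, and matching the limit with $z$ through the weak formulation and the uniqueness in Theorem \ref{tw.sob14}, I would conclude $z=E_\cdot\int_0^\zeta F(X_t)\,dt$ $m_1$-a.e. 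Lemma \ref{lm.wtwt5} (with $\varphi=0$) then gives $z\in C([0,T];L^1(E;\rho\cdot m))$.

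Finally, since $0\le\rho\le1$ and $\int_E\rho\,dm=1$, the Cauchy--Schwarz inequality gives $\|w\|_{L^1(E;\rho\cdot m)}\le\|w\|_H$ for $w\in H$, so $C([0,T];H)$ embeds continuously into $C([0,T];L^1(E;\rho\cdot m))$; in particular $q\in C([0,T];L^1(E;\rho\cdot m))$. Adding the two pieces yields $h=q+z\in C([0,T];L^1(E;\rho\cdot m))$, which is the assertion. I expect the main obstacle to be precisely the identification step for the $L^1$ part: because $F\cdot m_1$ need not lie in $\WW'_0$, the variational object $z$ cannot be handled directly by Theorem \ref{tw.sob14} and must instead be matched with the probabilistic functional through the $L^2$-approximation and the $L^1$-stability estimate underlying Lemma \ref{lm.wtwt5}; the only-weak terminal condition $h(T-)=0$ also has to be tracked carefully so as to apply uniqueness with $\varphi=0$.
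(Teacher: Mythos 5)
Your proof follows essentially the same route as the paper: subtract the two decompositions of $\mu$, absorb the $\VV'$ part (including $\BB(g_2-\bar g_2,\cdot)$) into a solution $q\in\WW_T\subset C([0,T];H)$ of a linear Cauchy problem, identify the remaining $L^1$ part with the Feynman--Kac functional $E_\cdot\int_0^\zeta(f-\bar f)(X_t)\,dt$ via Proposition \ref{stw.sob15}, and conclude with Lemma \ref{lm.wtwt5}. The only difference is your explicit truncation argument handling $F\in L^1\setminus L^2$ (since $F\cdot m_1$ need not define an element of $\WW_0'$), a point the paper passes over by invoking Proposition \ref{stw.sob15} directly; this is an extra precaution rather than a change of method.
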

\begin{proof}
For every bounded $\eta\in\WW_0$, we have
\[
\langle\partial_t\eta, g_2-\bar g_2\rangle=-\langle g_1-\bar
g_1,\eta\rangle-\int_{E_{0,T}}(f-\bar f) \eta\,dm_1.
\]
Hence,  for every  bounded $\eta\in\WW_0$,
\[
\langle\partial_t\eta, g_2-\bar g_2\rangle +\BB(g_2-\bar
g_2,\eta)=-\langle g_1-\bar g_1,\eta\rangle+\langle
\chi,\eta\rangle-\int_{E_{0,T}}(f-\bar f) \eta\,dm_1,
\]
where $\chi$ is an element of $\VV'$ such that $\langle \chi,\eta\rangle=\BB(g_2-\bar g_2,\eta)$, $\eta\in \VV$. Let $u=g_2-\bar g_2$ and  $v\in\WW_T$ be a solution to the Cauchy problem
\[
-\partial_t v-L_t v=-(g_1-\bar g_1)+\chi,\qquad v(T)=0.
\]
Then
\[
\langle\partial_t\eta,u-v\rangle+\BB(u-v,\eta)=-\int_{E_{0,T}}(f-\bar f) \eta\,dm_1
\]
for all bounded $\eta\in\WW_0$. By Proposition \ref{stw.sob15}, for $m_1$-a.e. $x\in E_{0,T}$ we have
\[
(u-v)(x)=E_x\int_0^{\zeta}(\bar f -f)(X_t)\,dt.
\]
By Lemma \ref{lm.wtwt5}, $u-v\in C([0,T];L^1(E;\rho\cdot m))$. Since $v\in\WW_T\subset C([0,T];H)\subset C([0,T];L^1(E;\rho\cdot m))$,  we get the desired result.
\end{proof}

The following proposition is a counterpart to \cite[Theorem 1.1]{PPP2}.

\begin{proposition}
Let $\mu\in\MM_{0,b}(E_{0,T})$. Then for every  $\varepsilon>0$
there exists a measure $\mu_\varepsilon \in
S_0(E_{0,T})-S_0(E_{0,T})$ such that
$\|\mu_\varepsilon-\mu\|_{TV}\le\varepsilon$ and
$\mu_{\varepsilon}$ admits decomposition of the form
\mbox{\rm(\ref{eq5.12})} with $f=0$ and  $g_2\in\VV\cap L^{\infty}(E_{0,T};m_1)$.
\end{proposition}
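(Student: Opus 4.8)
The plan is to reduce the statement to positive finite-energy measures and then to sharpen the decomposition of such measures by a potential-theoretic truncation. First I would reduce to the case $\mu\ge0$. Writing the Hahn decomposition $\mu=\mu^+-\mu^-$, both $\mu^{\pm}\in\MM_{0,b}(E_{0,T})$ are positive; if each is approximated by some $\sigma^{\pm}\in S_0(E_{0,T})$ enjoying the required properties with $\|\mu^{\pm}-\sigma^{\pm}\|_{TV}\le\varepsilon/2$, then $\mu_\varepsilon:=\sigma^+-\sigma^-\in S_0(E_{0,T})-S_0(E_{0,T})$ works: the decompositions add, $g_2=-(U_1\sigma^+-U_1\sigma^-)$ is a difference of bounded functions hence bounded, $f=0$, and $\|\mu-\mu_\varepsilon\|_{TV}\le\varepsilon$ by the triangle inequality. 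So assume $\mu\ge0$. By \cite[Theorem 4.7]{T2} there is a nest $\{F_n\}$ of compact sets with $\mathbf 1_{F_n}\cdot\mu\in S_0(E_{0,T})$; since $(\bigcup_nF_n)^c$ is exceptional and $\mu$ charges no exceptional set, $\mu(F_n^c)\to0$, so fixing $n$ large and putting $\nu:=\mathbf 1_{F_n}\cdot\mu$ gives a positive $\nu\in S_0(E_{0,T})$ with $\|\mu-\nu\|_{TV}\le\varepsilon/2$. It then suffices to approximate $\nu$ in total variation by a measure in $S_0(E_{0,T})$ admitting decomposition (\ref{eq5.12}) with $f=0$ and bounded $g_2$.

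For any positive $\nu\in S_0(E_{0,T})$ the potential $u:=U_1\nu\in\VV$ already produces such a decomposition, up to boundedness. Indeed, by (\ref{eq2.1}) and the second line of (\ref{eq2.23}), for $v\in\WW_0$ one has $\int_{E_{0,T}}\tilde v\,d\nu=\EE_1(u,v)=\langle\partial_tv,u\rangle+\BB_1(u,v)$, and comparison with (\ref{eq5.12}) yields $f=0$, $g_1=\BB_1(u,\cdot)\in\VV'$ and $g_2=-u=-U_1\nu$. Hence the entire task is to replace $\nu$ by a total-variation-close measure whose $1$-potential is bounded. The key step is the truncation
\[
\nu_M:=\mathbf 1_{\{\tilde u\le M\}}\cdot\nu,\qquad M>0,
\]
where $\tilde u$ is the quasi-continuous version of $u=U_1\nu$. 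On the one hand $\|\nu-\nu_M\|_{TV}=\nu(\{\tilde u>M\})$; since $\tilde u$ is finite q.e., $\nu$ charges no exceptional set and $\nu$ is finite, continuity of measure from above gives $\nu(\{\tilde u>M\})\downarrow\nu(\{\tilde u=\infty\})=0$, so the error is $\le\varepsilon/2$ for $M$ large. On the other hand $\nu_M\in S_0(E_{0,T})$, its $1$-potential being dominated by $U_1\nu\in\VV$ (domination principle) and charging no exceptional set, and I claim $U_1\nu_M\le M$ $m_1$-a.e., which makes $g_2=-U_1\nu_M$ bounded.

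The main obstacle is exactly this boundedness claim, which I would prove probabilistically. By (\ref{eq4.8}), $U_1\nu_M=R_1\nu_M$ $m_1$-a.e., where $R_1\nu_M(x)=E_x\int_0^{\zeta}e^{-t}\,dA^{\nu_M}_t$, and by the Revuz description $A^{\nu_M}_t=\int_0^t\mathbf 1_{\{\tilde u\le M\}}(X_s)\,dA^{\nu}_s$. Since $A^{\nu_M}\le A^{\nu}$, we get $R_1\nu_M\le R_1\nu=\tilde u$ q.e., so in particular $R_1\nu_M\le M$ on $\{\tilde u\le M\}$. For a general starting point, let $\tau=\inf\{t\ge0:\tilde u(X_t)\le M\}$ be the debut of the finely closed set $\{\tilde u\le M\}$ (finely closed because $\tilde u$ is finely continuous q.e.). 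On $[0,\tau)$ the trajectory stays in $\{\tilde u>M\}$, so $A^{\nu_M}$ does not increase there, and the strong Markov property gives
\[
R_1\nu_M(x)=E_x\big[e^{-\tau}R_1\nu_M(X_\tau);\,\tau<\zeta\big]
\le E_x\big[R_1\nu_M(X_\tau);\,\tau<\zeta\big]\le\sup_{\{\tilde u\le M\}}R_1\nu_M\le M,
\]
because $X_\tau\in\{\tilde u\le M\}$ on $\{\tau<\zeta\}$, where $R_1\nu_M\le M$. Hence $U_1\nu_M\le M$ $m_1$-a.e.

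Setting $\mu_\varepsilon:=\nu_M$ (and, in the signed case, $\mu_\varepsilon:=\nu_M^+-\nu_M^-$) completes the construction: $\mu_\varepsilon\in S_0(E_{0,T})-S_0(E_{0,T})$, $\|\mu_\varepsilon-\mu\|_{TV}\le\varepsilon$, and by the computation of the second paragraph $\mu_\varepsilon$ admits decomposition (\ref{eq5.12}) with $f=0$ and $g_2=-U_1\mu_\varepsilon\in\VV\cap L^{\infty}(E_{0,T};m_1)$. The care required beyond the above is the usual Dirichlet-form bookkeeping that I would spell out in detail: the fine continuity of $\tilde u$ and the fact that the debut of a finely closed set lands inside it, the identification $U_1=R_1$ from (\ref{eq4.8}) together with the Revuz formula for $A^{\nu_M}$, and the verification that restrictions of $S_0$-measures stay in $S_0(E_{0,T})$.
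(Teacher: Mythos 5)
Your overall strategy is the same as the paper's: reduce to a positive $\nu\in S_0(E_{0,T})$ via \cite[Theorem 4.7]{T2}, note that $U_1\nu$ already yields a decomposition with $f=0$ (via $g_1=\BB_1(U_1\nu,\cdot)$, $g_2=-U_1\nu$), and then truncate $\nu$ on a sublevel set of its potential so that the potential of the truncated measure becomes bounded. The gap is in the key boundedness step. First, $U_1\nu$ need not have a quasi-continuous $m_1$-version at all: by Theorem \ref{th8.1} and the example closing Section \ref{sec8}, potentials of measures in $\MM_{0,b}(E_{0,T})$ are in general only quasi-c\`adl\`ag, so you must commit to a precise version of $\tilde u$, and the choice matters because the truncation set $\{\tilde u\le M\}$ is being intersected with a measure that may charge $m_1$-null sets. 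If you take the natural c\`adl\`ag version $\tilde u(x)=R_1\nu(x)=E_x\int_0^\zeta e^{-t}\,dA^\nu_t$ (which is what (\ref{eq4.8}) provides), the claim $U_1\nu_M\le M$ fails. Concretely, let $\nu=\delta_{\{a\}}\otimes\rho\cdot m$ with $\rho\in L^1(E;m)\cap L^2(E;m)$ unbounded. Since $A^\nu$ jumps only when the space-time process reaches the hyperplane $\{a\}\times E$ from the left, and an additive functional satisfies $A_0=0$, one gets $R_1\nu(a,x^0)=0$; hence $\{a\}\times E\subset\{\tilde u\le M\}$, so $\nu_M=\nu$ and $U_1\nu_M=U_1\nu$ is unbounded. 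The flaw in your strong Markov computation is that
\[
R_1\nu_M(x)=E_x\bigl[e^{-\tau}\Delta A^{\nu_M}_\tau;\,\tau<\zeta\bigr]+E_x\bigl[e^{-\tau}R_1\nu_M(X_\tau);\,\tau<\zeta\bigr],
\]
and the first term, the jump of the (merely natural, not continuous) functional at the debut $\tau$, is exactly what you dropped; it is nonzero precisely in the situation above. The maximum principle $\sup R_1\nu_M\le\sup_{\{\tilde u\le M\}}R_1\nu_M$ that your argument amounts to is valid for continuous additive functionals carried by a finely closed set, but not for natural ones with jumps.

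The argument can be repaired along your lines, but the truncation must be performed with a version of the potential that sees the time-atoms of $\nu$ --- e.g.\ the left limits of $u(X)$ along trajectories, or equivalently one must additionally truncate on $\{h\le M\}$, where $h$ is the jump function of $A^\nu$ from \cite[Theorem 16.8]{GS}. This is exactly the point addressed in the paper by the It\^o-formula estimate $|u_n(x)|^2\le 2E_x\int_0^\zeta u_n(X_{t-})\mathbf 1_{F_n}(X_{t-})\,dA^\mu_t$, in which left limits along the trajectory enter the bound; the same care about which version defines the sublevel set is required there. Apart from this step, your reductions (Hahn decomposition, restriction to a nest from \cite[Theorem 4.7]{T2}, the total variation estimates, and the identification of $(f,g_1,g_2)$ from (\ref{eq2.1})) are correct and coincide with the paper's.
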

\begin{proof}
Without loss of generality we can assume that $\mu$ is positive.
Let $\{F^1_n\}$ be a nest such that $\mathbf{1}_{F^1_n}\cdot\mu\in
S_0(E_{0,T})$. Let $u=R\mu$, $F^2_n=\{u\le n\}$ and
$F_n=F^1_n\cap F^2_n$, $\mu_n=\mathbf{1}_{F_n}\cdot\mu$ and
$u_n=R\mu_n$. By It\^o's formula (see also (\ref{eq.deu})),
\[
|u_n(x)|^2\le
2E_x\int_0^{\zeta}u_n(X_{t-})\mathbf{1}_{F_n}(X_{t-})\,dA^\mu_t
\le 2n
E_x\int_0^{\zeta}\mathbf{1}_{F_n}(X_{t-})\,dA^\mu_t=2nu_n(x).
\]
Hence $u_n(x)\le 2n$ for q.e. $x\in E_{0,T}$. Consequently, $
u_n\in L^{\infty}(E_{0,T};m_1)$. By Proposition
\ref{stw.sob15}, $u_n$ is a solution to the  Cauchy problem
\[
-\partial_t u_n-L_tu_n=\mu_n,\qquad u_n(T)=0.
\]
In other words, for every bounded $v\in\WW_0$,
\[
\int_{E_{0,T}}\tilde v\,d\mu_n=\langle\chi_n,v\rangle
+\langle\partial_tv,u_n\rangle,
\]
where $\chi_n$ is an element of $\VV'$ such that
$\langle\chi_n,\eta\rangle=\BB(u_n,v)$, $v\in\VV$. Thus $\mu_n=\chi_n+\partial_tu_n$, i.e. $\mu_n$ admits the  decomposition \mbox{\rm(\ref{eq5.12})} with $f=0$ and $g_2:=u_n\in\VV\cap L^{\infty}(E_{0,T};m_1)$. Furthermore,
$\|\mu_n-\mu\|_{TV}=\mu(E_{0,T}\setminus F_n)\rightarrow0$ as $n\rightarrow\infty$, which completes the proof.
\end{proof}

It is known (see \cite[Example 3.1]{PPP2}) that not every bounded smooth measure can be written in the form (\ref{eq1.4narr}) with bounded $g_2$.
We are going to show that $g_2$ is always quasi-bounded
with respect to the capacity $c_2$.
To this end, we first recall  the definition of a parabolic potential (see, e.g., \cite{Pierre1}).

\begin{definition}
A measurable function $u\in\VV\cap L^\infty(0, T; H)$
is called a parabolic potential if for every positive
$v\in\WW_0$,
\begin{equation}
\label{eq8.3dm}
\langle\partial_tv,u\rangle+ \BB(u, v)\ge 0.
\end{equation}
\end{definition}

The set of parabolic potentials will be denoted by $\PP^2$. By \cite[Proposition I.1]{Pierre2}, for every $u\in\PP^2$ there exists a unique Borel measure $\mu_u$ such that for every $\eta\in \WW_0\cap C_b(E_{0,T})$,
\[
\langle \partial\eta_t, u\rangle+\BB(u,\eta)=\int_{E_{0,T}}\eta\,d\mu_u.
\]
Let $\mu\in S_0$ be positive. Then, by \cite[Proposition 3.1]{K:JEE},  $\mu\in\PP^2$ and  $\mu_{R\mu}=\mu$. Following \cite{Pierre3}, for $u\in\PP^2$ we set
\[
\|u\|^2_\Lambda=\mbox{ess\,sup}_{t\in (0,T)}\|u(t)\|^2_H+\|u\|^2_\VV.
\]
For an open set  open $U\subset E_{0,T}$, we define
\[
c_1(U)=\inf\{\|\eta\|_\Lambda:\eta\in\PP^2,\, \eta\ge \mathbf{1}_U\,\, m_1\mbox{-a.e.}\}.
\]
By \cite[Theorem 1]{Pierre3},  there exists $\alpha>0$ such that
\begin{equation}
\label{eq8.1dm}
c_0\le \alpha c_1.
\end{equation}

For $k\ge0$, we set
\[
T_k(s)=\max\{-k,\min\{k,s\}\},\quad s\in\BR.
\]
\begin{lemma}
\label{lm8.1dm}
Let $\mu\in\MM_{0,b}^+$. Then $T_k(R\mu)\in\PP^2$, $k\ge0$, and
there exists $c>0$ such that
\[
\|T_k(R\mu)\|^2_\Lambda\le ck\|\mu\|_{TV}.
\]
\end{lemma}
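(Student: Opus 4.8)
The plan is to derive the estimate from the parabolic energy inequality obtained by testing the equation solved by the potential $u:=R\mu$ against its own truncation $T_k(u)$, after first reducing to the finite-energy case. Since $\mu\ge0$ is smooth we have $u\ge0$, and by \cite[Theorem 4.7]{T2} there is a nest $\{F_n\}$ with $\mu_n:=\mathbf{1}_{F_n}\cdot\mu\in S_0(E_{0,T})$; for these $u_n:=R\mu_n\in\VV$ and, by Lemma \ref{lem2.3} and (\ref{eq4.8}), $\EE(u_n,\eta)=\int_{E_{0,T}}\tilde\eta\,d\mu_n$ for $\eta\in\WW_0$. I would record the two elementary facts about $\Theta_k(s):=\int_0^sT_k(r)\,dr$: for $s\ge0$ one has $\tfrac12T_k(s)^2\le\Theta_k(s)\le ks$, and (chain rule) $\partial_t\Theta_k(u_n)=T_k(u_n)\,\partial_tu_n$.

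Testing the equation for $u_n$ with $T_k(u_n)\mathbf{1}_{[t,T)\times E}$ and using $u_n(T-)=0$ (Proposition of Section \ref{sec6.5}) gives, for a.e.\ $t$,
\[
\int_E\Theta_k(u_n(t))\,dm+\int_t^TB^{(s)}(u_n(s),T_k(u_n)(s))\,ds=\int_{[t,T)\times E}T_k(u_n)\,d\mu_n .
\]
On the right, $0\le T_k(u_n)\le k$ and $\mu_n\ge0$ bound the term by $k\|\mu_n\|_{TV}\le k\|\mu\|_{TV}$. For the form term I would invoke the Markovian property of the (non-symmetric) Dirichlet form $B^{(s)}$: writing $u_n=T_k(u_n)+(u_n-k)^+$ one has $B^{(s)}((u_n-k)^+,T_k(u_n))\ge0$, so $B^{(s)}(u_n,T_k(u_n))\ge B^{(s)}(T_k(u_n),T_k(u_n))=\tilde B^{(s)}(T_k(u_n),T_k(u_n))$, the antisymmetric part dropping on the diagonal. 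Taking $t\downarrow0$ and discarding the nonnegative $\Theta_k$-term yields $\int_0^T\tilde B^{(s)}(T_k(u_n),T_k(u_n))\,ds\le k\|\mu\|_{TV}$, which by the ellipticity (c) controls $\int_0^TB^{(0)}(T_k(u_n),T_k(u_n))\,ds$; keeping instead the $\Theta_k$-term and using $\Theta_k\ge\tfrac12T_k^2$ gives $\mathrm{ess\,sup}_t\|T_k(u_n)(t)\|_H^2\le2k\|\mu\|_{TV}$. Since $\|T_k(u_n)\|_\VV^2=\int_0^T[B^{(0)}(T_k(u_n),T_k(u_n))+\|T_k(u_n)(s)\|_H^2]\,ds$, combining the two bounds gives $\|T_k(u_n)\|_\Lambda^2\le c\,k\|\mu\|_{TV}$ with $c$ depending only on $T$ and the constant in (c), uniformly in $n$.

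For membership I would first note $T_k(u_n)\in\PP^2$: as $x\mapsto\min(x,k)$ is concave and nondecreasing, $T_k(u_n)(X_\cdot)=\min(u_n(X_\cdot),k)$ is again a nonnegative right-continuous supermartingale (Jensen and monotonicity applied to the potential $u_n(X_\cdot)$), hence $T_k(u_n)$ is excessive; together with $T_k(u_n)\in\VV\cap L^\infty(0,T;H)$ this yields the variational inequality (\ref{eq8.3dm}), equivalently $\langle\partial_tv,T_k(u_n)\rangle+\BB(T_k(u_n),v)=[\langle\partial_tv,u_n\rangle+\BB(u_n,v)]-[\langle\partial_tv,(u_n-k)^+\rangle+\BB((u_n-k)^+,v)]\ge0$ for positive $v\in\WW_0$ (first bracket $=\int\tilde v\,d\mu_n\ge0$, second $\le0$). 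Letting $n\to\infty$, monotonicity $\mu_n\uparrow\mu$ gives $u_n\uparrow u$ and $T_k(u_n)\uparrow T_k(u)$, so the uniform bound passes to the limit by weak lower semicontinuity in $\VV$ and Fatou (for the $\mathrm{ess\,sup}_t\|\cdot\|_H^2$ part), and closedness of $\PP^2$ gives $T_k(u)\in\PP^2$ with $\|T_k(u)\|_\Lambda^2\le ck\|\mu\|_{TV}$.

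The main obstacle is the rigor in the time variable: even for $\mu_n\in S_0$ the potential $u_n$ is in general only c\`adl\`ag (it jumps at the time-atoms of $\mu_n$, cf.\ (\ref{eq5.djp1})) and need not lie in $\WW_T$, so the chain rule for $\Theta_k(u_n)$ and the boundary evaluation at $T$ are only formal. I would make them precise exactly as in the preceding proposition, applying the It\^o (Meyer--It\^o) formula to $\Theta_k(u_n(X_t))$ so that the jump contributions enter with the correct (favorable) sign, or equivalently by a Steklov time-averaging of $u_n$ and passage to the limit. The second delicate point is the inequality $B^{(s)}((u_n-k)^+,T_k(u_n))\ge0$ for the general non-symmetric, possibly nonlocal form; this follows from the Dirichlet (Markovian) property, the jump, diffusion and killing parts being separately nonnegative because $(u_n-k)^+$ and $T_k(u_n)$ have matching monotonicity and disjoint gradients.
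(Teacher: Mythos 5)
Your overall strategy --- reduce to $\mu_n=\mathbf{1}_{F_n}\cdot\mu\in S_0(E_{0,T})$, establish the estimate for $u_n:=R\mu_n$, then pass to the limit by weak lower semicontinuity in $\VV$ and Fatou --- coincides with the paper's, and your final limit passage is essentially the one in the text. The difference is the middle step: the paper does not re-derive the energy estimate at all, but quotes \cite[Corollary I.1]{Pierre2} for $T_k(R\mu_n)\in\PP^2$, \cite[Lemma III-1]{Pierre1} for $\|T_k(R\mu_n)\|^2_\Lambda\le ck\|\mu_{T_k(R\mu_n)}\|_{TV}$, and \cite[Lemma II-5]{Pierre1} for $\|\mu_{T_k(R\mu_n)}\|_{TV}\le\|\mu_n\|_{TV}$, together with \cite[Theorem 3.12]{K:JFA} for the uniform $\VV$-bound. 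You instead attempt to re-prove these facts by the classical truncation/energy computation, and it is exactly there that the proposal has genuine gaps.

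Concretely: (1) the identity obtained by testing with $T_k(u_n)\mathbf{1}_{[t,T)\times E}$ is not justified, because even for $\mu_n\in S_0(E_{0,T})$ the potential $u_n$ is in general only c\`adl\`ag in time --- it jumps downward at the time-atoms of $\mu_n$, cf.\ (\ref{eq5.djp1}) --- and does not belong to $\WW$; the chain rule for $\Theta_k(u_n)$ and the evaluation at $T$ are therefore formal, and checking that the jump corrections (comparing $\Theta_k(u_n(t-))-\Theta_k(u_n(t))$ with the pairing of $T_k(u_n)$ against the atom of $\mu_n$ at $t$) carry the favorable sign is precisely the delicate content of Pierre's Lemma III-1. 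You flag this obstacle and propose It\^o's formula or Steklov averaging, but do not carry either out, so the key inequality is asserted rather than proved. (2) The inequality $B^{(s)}((u_n-k)^+,T_k(u_n))\ge0$ is justified by splitting $B^{(s)}$ into diffusion, jump and killing parts; no such Beurling--Deny decomposition is available for the general regular non-symmetric Dirichlet forms assumed in (a)--(c), and the inequality does not follow from the abstract contraction property of a non-symmetric Dirichlet form alone (that property only yields $B^{(s)}\bigl(u\pm(u^+\wedge k),u\mp(u^+\wedge k)\bigr)\ge0$, which gives a lower bound of the wrong form). (3) In the membership argument, the claim that $\langle\partial_tv,(u_n-k)^+\rangle+\BB((u_n-k)^+,v)\le0$ for all positive $v\in\WW_0$ is unjustified and in general false: when $\mu_n$ has a time-atom, $(u_n-k)^+$ inherits part of the downward jump of $u_n$, and the associated functional has a nonnegative, not nonpositive, singular part. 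Your parallel argument via the supermartingale property of $\min(u_n(X),k)$ is the correct one and is in effect what \cite[Corollary I.1]{Pierre2} encodes.
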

\begin{proof}
By \cite[Theorem 3.12]{K:JFA}, $T_k(R\mu)\in\VV$. Let $\{F_n\}$ be a Cap$_\psi$-nest of compact sets
such that $\mu_n:=\mathbf{1}_{F_n}\cdot\mu\in S_0$. Since $\{F_n\}$ is a Cap$_\psi$-nest, $R\mu_n\nearrow R\mu$ Cap$_\psi$-q.e., hence  $m_1$-a.e.  Since $R\mu_n\in \PP^2$, by \cite[Corollary I.1]{Pierre2}, $T_k(R\mu_n)\in\PP^2$.
Hence, by \cite[Lemma III-1]{Pierre1}, there exists $c>0$ such that
\[
\|T_k(R\mu_n)\|^2_\Lambda\le ck\|\mu_{T_k(R\mu_n)}\|_{TV}.
\]
By \cite[Lemma II-5]{Pierre1},
\begin{equation}
\label{eq8.2dm}
\|T_k(R\mu_n)\|^2_\Lambda\le ck\|\mu_{T_k(R\mu_n)}\|_{TV}\le ck\|\mu_{R\mu_n}\|_{TV}=ck\|\mu_n\|_{TV}\le ck\|\mu\|_{TV}.
\end{equation}
By \cite[Theorem 3.12]{K:JFA}, $\sup_{n\ge 1}\|T_k(R\mu_n)\|_\VV<\infty$.
Since $T_k(R\mu_n)\rightarrow T_k(R\mu)$ $m_1$-a.e. as $n\rightarrow\infty$, it follows that,
up to a subsequence, $T_k(R\mu_n)\rightarrow T_k(R\mu)$ weakly in $\VV$. It is clear that $T_k(R\mu)$ satisfies (\ref{eq8.3dm}), which when combined with (\ref{eq8.2dm}) implies that $T_k(R\mu)\in\PP^2$ and
the desired inequality holds true.
\end{proof}

\begin{proposition}
Assume that $\mu\in\MM_{b}(E_{0,T})$ is of the form \mbox{\rm(\ref{eq1.4})}. Then
$g_2$ has an $m_1$-version $\tilde g_2$ which is $c_0$-quasi-bounded.
\end{proposition}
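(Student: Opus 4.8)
The plan is to reduce the statement to the $c_0$-quasi-boundedness of parabolic potentials $R\nu$ of \emph{positive} bounded smooth measures $\nu$, and then to extract that from the truncation estimate of Lemma \ref{lm8.1dm} together with the capacity comparison (\ref{eq8.1dm}). Throughout I use that the $c_0$-quasi-bounded functions form a vector space and that $|\varphi|\le\psi$ with $\psi$ quasi-bounded forces $\varphi$ quasi-bounded; hence it suffices to exhibit one representation of $g_2$ as a finite combination of quasi-bounded functions.

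First I would recast $g_2$ in terms of potentials. Set $\chi:=\BB(g_2,\cdot)\in\VV'$ and let $v\in\WW_T$ solve the Cauchy problem $-\partial_t v-L_tv=g_1+\chi$, $v(T)=0$, which exists by the variational theory since $g_1+\chi\in\VV'$. Subtracting the weak formulation of $v$ from the defining identity (\ref{eq5.12}) of $g_2$, one checks that $g_2-v\in\VV$ satisfies $\langle\partial_t\eta,g_2-v\rangle+\BB(g_2-v,\eta)=\int_{E_{0,T}}\tilde\eta\,d(f\cdot m_1-\mu)$ for bounded $\eta\in\WW_0$; since $g_2-v\in\VV$, the source is an element of $\WW_0'$ representing the bounded smooth measure $f\cdot m_1-\mu$, so Proposition \ref{stw.sob15} (with $f=0,\varphi=0$) gives
\[
g_2=v+R(f\cdot m_1)-R\mu\qquad m_1\text{-a.e.}
\]
Here $v\in\WW_T$ has an $\EE$-quasi-continuous $m_1$-version, which is quasi-bounded (on an $\EE$-nest of compact sets it is continuous, hence bounded on each member). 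Splitting $f=f^+-f^-$ and $\mu=\mu^+-\mu^-$, the remaining terms are potentials $R\nu$ of positive bounded smooth measures (with $|f^{\pm}|\cdot m_1$ bounded and smooth), so everything reduces to the core claim.

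The core step is: if $\nu\in\MM_{0,b}(E_{0,T})$ is positive, then $R\nu$ is $c_0$-quasi-bounded. For $k\ge1$ put $\eta_k:=\tfrac1k T_k(R\nu)$. By Lemma \ref{lm8.1dm}, $T_k(R\nu)\in\PP^2$, and since multiplication by the positive constant $1/k$ preserves $\PP^2$, also $\eta_k\in\PP^2$, with
\[
\|\eta_k\|_\Lambda^2=\frac1{k^2}\,\|T_k(R\nu)\|_\Lambda^2\le\frac{c\,\|\nu\|_{TV}}{k}.
\]
Because $R\nu\ge0$, on $\{R\nu\ge k\}$ we have $T_k(R\nu)=k$, whence $\eta_k\ge\mathbf{1}_{\{R\nu>k\}}$ $m_1$-a.e. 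Now $R\nu$ is the increasing q.e.-limit of the quasi-continuous finite-energy potentials $R\nu_n$, $\nu_n:=\mathbf{1}_{F_n}\cdot\nu\in S_0(E_{0,T})$ (the nest of \cite[Theorem 4.7]{T2} and the construction $A^\nu=\sup_nA^{\nu_n}$ from Proposition \ref{prop4.1}), so $R\nu$ is $c_0$-quasi-l.s.c. and $\{R\nu>k\}$ is $c_0$-quasi-open. Thus $\eta_k$ is admissible in the definition of $c_1(\{R\nu>k\})$, and (\ref{eq8.1dm}) yields
\[
c_0(\{R\nu>k\})\le\alpha\,c_1(\{R\nu>k\})\le\alpha\,\|\eta_k\|_\Lambda\le\alpha\Big(\frac{c\,\|\nu\|_{TV}}{k}\Big)^{1/2}\xrightarrow[k\to\infty]{}0.
\]

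Finally I would convert this decay of the capacities of the superlevel sets into a nest. Choosing $k_n\uparrow\infty$, the $c_0$-quasi-lower-semicontinuity of $R\nu$ makes $\{R\nu\le k_n\}$ $c_0$-quasi-closed, so there are closed $F_n\subset\{R\nu\le k_n\}$ with $c_0(\{R\nu\le k_n\}\setminus F_n)\to0$; then $R\nu\le k_n$ on $F_n$ and $c_0(E_{0,T}\setminus F_n)\le c_0(\{R\nu>k_n\})+c_0(\{R\nu\le k_n\}\setminus F_n)\to0$, so after passing to an increasing sequence $R\nu$ is $c_0$-quasi-bounded. Combining this with the representation of the first paragraph produces a $c_0$-quasi-bounded $m_1$-version of $g_2$. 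The step I expect to be the main obstacle is that first paragraph: rigorously identifying the abstract $\VV$-component $g_2$ with the probabilistic potential $R\mu$ even though $R\mu\notin\VV$ in general, i.e. justifying the subtraction argument and the applicability of Proposition \ref{stw.sob15} to the signed source $f\cdot m_1-\mu$. Once this representation is secured, the quantitative truncation estimate of Lemma \ref{lm8.1dm} does the rest.
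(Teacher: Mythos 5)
Your proposal follows essentially the same route as the paper's proof: the same reduction $g_2=w-R\nu$ with $w\in\WW_T$ and $\nu=\mu-f\cdot m_1$ (obtained by subtracting the variational solution with source $g_1+\chi$, $\chi=\BB(g_2,\cdot)$, and invoking Proposition \ref{stw.sob15}), the same truncation estimate from Lemma \ref{lm8.1dm}, the same Chebyshev-type decay $c_0(\{R\nu>k\})\le\alpha c_1(\{R\nu>k\})\to0$ via (\ref{eq8.1dm}), and the same conversion of superlevel-set capacity decay into a $c_0$-nest by intersecting with a nest on which the function is l.s.c. The quantitative core is correct.

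The one genuine weak point is that you twice justify a $c_0$-quasi-notion by an $\EE$-quasi-notion: you assert that the $\WW_T$-component $v$ is quasi-bounded because it has an $\EE$-quasi-continuous version (bounded on members of an $\EE$-nest), and that $R\nu$ is $c_0$-quasi-l.s.c.\ because it is an increasing q.e.\ limit of the $\EE$-quasi-continuous potentials $R\nu_n$. But the paper only establishes $\mbox{Cap}_\psi\le c_0$ (in the proof of Lemma \ref{equiv.1}) together with equality of null sets; there is no reverse quantitative comparison, so a $\mbox{Cap}_\psi$-nest (i.e.\ an $\EE$-nest) is not known to be a $c_0$-nest, and your argument as written only yields $\mbox{Cap}_\psi$-quasi-boundedness, not $c_0$-quasi-boundedness. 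The paper sidesteps this by quoting Pierre directly: \cite[Theorem III.1]{Pierre2} gives a $c_0$-quasi-continuous (hence $c_0$-quasi-bounded) version of any element of $\WW$, and \cite[Corollary III.3]{Pierre2} gives a $c_0$-quasi-l.s.c.\ version of any element of $\PP^2$ (applied to $T_k(R\nu)$, whose supremum over $k$ is then $c_0$-quasi-l.s.c.). With those two citations substituted for your $\EE$-nest arguments, your proof closes.
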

\begin{proof}
Set $\nu=\mu-f\cdot m_1$ and
\[
v(x)=-E_{x}\int^{\zeta}_0dA^{\nu}_t,
\]
where $A^{\nu}$ is a natural AF of $\BX$ in the Revuz correspondence with
$\nu$. By Proposition \ref{stw.sob15}, $v$ is a solution to (\ref{eq7.1}) with $f=0$, $\varphi=0$ and $g$  replaced
by $-\nu$. Observe that $w=v-g_2$ is a
solution to the  Cauchy problem
\[
-\partial_t w-L_tw=-g_1-\chi,\qquad w(T)=0,
\]
where $\chi$ is an element of $\VV'$ such that $\langle\chi,\eta\rangle=\BB(g_2,\eta)$, $\eta\in\VV$.
Since
$g_1+\chi\in\VV'$, we have $w\in\WW_T$. Therefore
\begin{equation}
\label{eq6.5.vir1}
g_2=w-E_\cdot A^\nu_\zeta\quad m_1\mbox{-a.e.}
\end{equation}
for some $w\in\WW_T$.
By \cite[Theorem III.1]{Pierre2}, $w$ has an $m_1$-version $\tilde w$ which is  $c_0$-quasi-continuous, so $c_0$-quasi-bounded. Therefore we only need to show that  $v_+(x):= E_{x}A^{\nu^+}_\zeta$ and $v_-(x):= E_{x}A^{\nu^-}_\zeta$ have $m_1$-versions which are $c_0$-quasi-bounded. We will show this for $v_+$. The proof for $v_{-}$ is analogous. Let $v_+^k=T_k(v_+)$. By Lemma \ref{lm8.1dm}, $v_+^k\in\PP^2$, so by \cite[Corollary III.3]{Pierre2}, there exists an $m_1$-version $\tilde v_+^k$
of $v_+^k$ which is $c_0$-quasi-l.s.c. Hence $\tilde v_+:=\sup_{k\ge 1}\tilde v_+^k $ is also $c_0$-quasi-l.s.c.,  and of course, it is an $m_1$-version of $v_+$.
By Lemma \ref{lm8.1dm} and (\ref{eq8.1dm}),
\begin{equation}
\label{eq8.4dm}
c_0(\tilde v_+>n)=c_0(\tilde v^{n+1}_+>n)\le \alpha n^{-2}\|(v^{n+1}_+)\|^2_\Lambda\le2c\alpha n^{-1}\|\nu^+\|_{TV}.
\end{equation}
Let $\{F^1_n\}$ be a $c_0$-nest such that $(\tilde v_+)_{|F_n}$ is l.s.c. for each $n\ge 1$. Set $F_n=F^1_n\cap \{\tilde v_+\le n\}$.
Then  $F_n$ is closed. Moreover,
\begin{align*}
c_0(E_{0,T}\setminus F_n)&\le c_0(\tilde v_+>n)+c_0(E_{0,T}\setminus F^1_n)\rightarrow 0
\end{align*}
as $n\rightarrow\infty$, which proves that $\tilde v_+$ is $c_0$-quasi-bounded.
%Let $(\phi,\gamma_1,\gamma_2)$ be
%a decomposition for $\nu^+$ of Theorem \ref{}. By the reasoning preceding ...
%\[
%R\nu^+=u+R\psi,
%\]
%where $u\in\WW_T$ is a unique solution to the Cauchy problem
%\begin{equation}
%\label{eq.sob10}
%(\partial_t+L_t)u=\gamma_1,\qquad u(T)=0,
%\end{equation}
%Repeating the reasoning of the proof of Theorem \ref{th8.1} but for $\nu^+$
\end{proof}

\section{Smoothness of measures in $\WW'_0$ }
\label{sec7}

We  already know that each bounded smooth measures admit decomposition of the form (\ref{eq1.4}). The problem whether a bounded measure admitting decomposition (\ref{eq1.4}) is smooth is more delicate. In this section, we give  positive answer to this question.
% in the case where $\EE$ satisfies the so-called continuity condition (also called Meyer's condition (L); see \cite[p. 246]{DM2}).
%In the general case, we are able to give positive answer under the additional assumption that $g_2\in L^{\infty}(E_{0,T};m_1)$.

In the proof of our result, we will make use of Fukushima's decomposition, which we now recall. For an AF $A$ of $\BX$ its energy is defined by
\[
e(A)=\frac12\lim_{\alpha\rightarrow\infty}\alpha^2E_{m_1}\int^{\infty}_0e^{-\alpha t}|A_t|^2\,dt
\]
whenever the limit exists in $[0,\infty]$.
By \cite[Proposition IV.1.8]{St2}, each $u\in\WW$ has a quasi-continuous $m_1$-modification, which we will denote by $\tilde u$. By \cite[Theorem 4.5]{T1},  for any $w\in\WW$ there exists a unique  martingale AF of $\BX$ of finite energy  $M^{[u]}$ and a unique continuous AF of $\BX$ of zero energy $N^{[u]}$ such that
\begin{equation}
\label{eq2.6} \tilde u(X_t)-\tilde
u(X_0)=M^{[u]}_t+N^{[u]}_t,\quad t\ge0,\quad P_x\mbox{-a.s.}
\end{equation}
for q.e. $x\in E_{0,T}$. The decomposition (\ref{eq2.6}) is called  Fukushima's decomposition.

\begin{proposition}
\label{prop7.1}
Assume that $u\in \WW_T$. Then
\[
e(M^{[u]})\le \BB(u,u).
\]
\end{proposition}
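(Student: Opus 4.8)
The plan is to transfer the problem from the martingale $M^{[u]}$ to the full additive functional $A^{[u]}_t:=\tilde u(X_t)-\tilde u(X_0)$, which by Fukushima's decomposition (\ref{eq2.6}) equals $M^{[u]}_t+N^{[u]}_t$. For an AF $B$ write $e_\alpha(B)=\frac12\alpha^2E_{m_1}\int_0^\infty e^{-\alpha t}B_t^2\,dt$, so $e(B)=\lim_{\alpha\to\infty}e_\alpha(B)$ when the limit exists. Each $e_\alpha$ is one half of a nonnegative quadratic form (the squared norm of $B$ in $L^2(P_{m_1}(d\omega)\otimes e^{-\alpha t}dt)$), so it satisfies Cauchy--Schwarz: $|e_\alpha(M^{[u]},N^{[u]})|\le(e_\alpha(M^{[u]})\,e_\alpha(N^{[u]}))^{1/2}$. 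Expanding $e_\alpha(A^{[u]})=e_\alpha(M^{[u]})+2e_\alpha(M^{[u]},N^{[u]})+e_\alpha(N^{[u]})$ and letting $\alpha\to\infty$, the facts that $M^{[u]}$ has finite energy and $N^{[u]}$ has zero energy (\cite[Theorem 4.5]{T1}) force the cross term to vanish, so $e(A^{[u]})$ exists and equals $e(M^{[u]})$. It therefore suffices to bound $e(A^{[u]})$.

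Next I would compute $e_\alpha(A^{[u]})$ by expanding $(\tilde u(X_t)-\tilde u(X_0))^2$. Using the Markov property and $R_\alpha v=G_\alpha v$ $m_1$-a.e., the cross term contributes $-\alpha^2(u,G_\alpha u)_\HH$ and the term $\tilde u(X_0)^2$ contributes $\frac\alpha2\|u\|_\HH^2$; for the term $\tilde u(X_t)^2$ I would use the $m_1$-duality of $R_\alpha$ and $\hat R_\alpha$ together with the lifetime identity $\alpha\hat R_\alpha\mathbf 1=\mathbf 1-\hat E_\cdot e^{-\alpha\zeta}$ (immediate from $\mathbf 1(\Delta)=0$ and $\hat P_x$ being a probability). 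This yields
\[
e_\alpha(A^{[u]})=\mathcal E^{(\alpha)}(u,u)-\frac\alpha2\int_{E_{0,T}}\tilde u^2\,(\hat E_\cdot e^{-\alpha\zeta})\,dm_1,\qquad \mathcal E^{(\alpha)}(u,u):=\alpha\|u\|_\HH^2-\alpha^2(G_\alpha u,u)_\HH .
\]
From $\EE_\alpha(G_\alpha u,u)=(u,u)_\HH$ in (\ref{eq2.26}) (valid since $u\in\WW_T\subset\VV$) one obtains $\mathcal E^{(\alpha)}(u,u)=\EE(\alpha G_\alpha u,u)$, and the subtracted term is nonnegative.

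Finally I would pass to the limit in the two pieces. Repeating the argument of Lemma \ref{lem2.2} with $G_\alpha$ and $\WW_T$ in place of $\hat G_\alpha$ and $\WW_0$ gives $\alpha G_\alpha u\to u$ weakly in $\WW_T$, hence $\mathcal E^{(\alpha)}(u,u)=\EE(\alpha G_\alpha u,u)\to\EE(u,u)$; and since $u\in\WW_T$, integration by parts in the first line of (\ref{eq2.23}) yields $\EE(u,u)=\frac12\|u(0)\|_H^2+\BB(u,u)$. For the correction term I would invoke (\ref{eq2.30}), which gives $\zeta\le\pi(x)$ $\hat P_x$-a.s., so $\hat E_x e^{-\alpha\zeta}\ge e^{-\alpha\pi(x)}$ and therefore
\[
\frac\alpha2\int_{E_{0,T}}\tilde u^2\,(\hat E_\cdot e^{-\alpha\zeta})\,dm_1\ge\frac\alpha2\int_0^T e^{-\alpha s}\|u(s)\|_H^2\,ds ,
\]
whose right-hand side tends to $\frac12\|u(0)\|_H^2$ as $\alpha\to\infty$ (an Abelian limit, legitimate since $s\mapsto\|u(s)\|_H^2$ is continuous because $\WW_T\subset C([0,T];H)$). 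Since $e_\alpha(A^{[u]})\to e(M^{[u]})$ and $\mathcal E^{(\alpha)}(u,u)\to\EE(u,u)$, the subtracted term converges, and its limit is at least $\frac12\|u(0)\|_H^2$; hence $e(M^{[u]})=e(A^{[u]})\le\EE(u,u)-\frac12\|u(0)\|_H^2=\BB(u,u)$.

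The main obstacle is the exact evaluation of $e_\alpha(A^{[u]})$ and the two limit passages, in particular justifying the weak $\WW_T$-convergence that gives $\mathcal E^{(\alpha)}(u,u)\to\EE(u,u)$ and handling the boundary/lifetime term. Conceptually, the a.s. bound $\zeta\le\pi(x)$ is precisely where an \emph{inequality} rather than equality appears: were the spatial motion never killed before time $0$ under $\hat P_x$ (i.e. $\zeta=\pi(x)$ a.s.), the correction term would equal exactly $\frac12\|u(0)\|_H^2$ and one would get $e(M^{[u]})=\BB(u,u)$; extra killing of the spatial part makes $\zeta$ smaller, enlarges the subtracted term, and yields the asserted bound.
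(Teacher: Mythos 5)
Your proof is correct and takes essentially the same route as the paper's: both evaluate $\alpha^2E_{m_1}\int_0^\infty e^{-\alpha t}|A^{[u]}_t|^2\,dt=2\EE(\alpha G_\alpha u,u)-\alpha(u^2,1-\alpha\hat R_\alpha 1)_\HH$, bound the subtracted term from below using $\alpha\hat R_\alpha 1=1-\hat E_\cdot e^{-\alpha\zeta}$ and the lifetime bound $\zeta\le\hat\upsilon(0)$ so that its limit is at least $\tfrac12\|u(0)\|_H^2$, and conclude via $e(A^{[u]})=e(M^{[u]})$ and $\EE(u,u)-\tfrac12\|u(0)\|_H^2=\BB(u,u)$. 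The only cosmetic differences are that you spell out the Cauchy--Schwarz argument killing the cross term (which the paper merely asserts from (\ref{eq2.6})) and obtain $\EE(\alpha G_\alpha u,u)\to\EE(u,u)$ by adapting Lemma \ref{lem2.2} where the paper cites Stannat directly.
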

\begin{proof}
Let $A^{[u]}_t=\tilde u(X_t)-\tilde u(X_0)$, $t\ge0$. We have
\begin{align*}
\alpha^2E_{m_1}\int^{\infty}_0e^{-\alpha t}|A^{[u]}_t|^2\,dt
&=\alpha^2\int_{E_{0,T}}(R_{\alpha}u-2u R_{\alpha}u+\alpha^{-1}u)\,dm_1\\
&=2\alpha(u-\alpha R_{\alpha}u,u)_{\HH}-\alpha(u^2,1-\alpha \hat R_{\alpha}1)_{\HH}\\
&=2\EE(\alpha G_{\alpha}u, u)-\alpha(u^2,1-\alpha \hat R_{\alpha}1)_{\HH}.
\end{align*}
By (\ref{eq2.30}),
$\alpha\hat R_\alpha 1(x)=1-\hat E_x e^{-\alpha\zeta}$, $x\in E_{0,T}$.
Hence
\[
\alpha(u^2,1-\alpha \hat R_{\alpha}1)_{\HH}
=\int_{E_{0,T}}\alpha|u|^2\hat E_x e^{-\alpha\zeta}\,m_1(dx)
\ge \int_0^T\alpha |u(s)|_H^2\,e^{-\alpha s}\,ds,
\]
which converges to $\|u(0)\|^2_H$ as $\alpha\rightarrow\infty$. By the above calculations,
\begin{align*}
e(A^{[u]})&=\limsup_{\alpha\rightarrow \infty}\Big(\EE(\alpha G_{\alpha}u,u)-\frac12\alpha(u^2,1-\alpha \hat R_{\alpha}1)_{\HH}\Big),
\end{align*}
so by \cite[Proposition 2.7]{St1},
\[
e(A^{[u]})\le\EE(u,u)-\frac12\|u(0)\|^2_H=\BB(u,u).
\]
On the other hand, by  (\ref{eq2.6}), $e(A^{[u]})=e(M^{[u]})$, which proves the proposition.
\end{proof}

\begin{lemma}
\label{lm8.c2}
Assume that $A, A^n$, $n\ge 1$, are additive functionals of $\mathbb X$.
\begin{enumerate}[\rm(i)]
\item If $E_{\eta\cdot m_1}\sup_{t\ge 0}|A_t|^p<\infty$ for some $p\ge 1$ and strictly positive $\eta\in\BB(E_{0,T})$, then $E_x\sup_{t\ge 0}|A_t|^p<\infty$ for  q.e. $x\in E_{0,T}$.
\item If $E_{\eta\cdot m_1}\sup_{t\ge 0}|A^n_t-A^m_t|^p\rightarrow 0$ as $n,m\rightarrow \infty$ for some $p\ge 0$ and strictly positive $\eta\in\BB(E_{0,T})$, then there exists
a subsequence (still denoted by $n$) such that $E_{x}\sup_{t\ge 0} |A^n_t-A^m_t|^p\rightarrow 0$ as $n,m\rightarrow \infty$ for  q.e. $x\in E_{0,T}$.
\end{enumerate}
\end{lemma}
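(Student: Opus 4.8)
The plan is to prove both parts by upgrading the hypotheses from an ``$m_1$-a.e.'' statement to a ``q.e.'' statement, using the probabilistic description of exceptional sets recorded in (\ref{eq2.13}): a (nearly Borel) set $N\subset E_{0,T}$ is $\EE$-exceptional precisely when $P_x(\sigma_N<\infty)=0$ for $m_1$-a.e. $x$. The engine in both cases is a shift bound for the supremum of an additive functional.

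For (i), set $S=\sup_{t\ge0}|A_t|$ and $\phi(x)=E_x[S^p]$, and let $N=\{\phi=\infty\}$. Since $\eta$ is strictly positive, the hypothesis $\int_{E_{0,T}}\phi\,\eta\,dm_1=E_{\eta\cdot m_1}[S^p]<\infty$ forces $\phi<\infty$ $m_1$-a.e., i.e. $m_1(N)=0$. The key structural fact is that, from the additivity $A_{t+u}=A_t+A_u\circ\theta_t$ together with $A_0=0$, one gets $S\circ\theta_t\le 2S$ for every $t\ge0$, and the same with any stopping time in place of $t$. Applying this at $\tau=\sigma_N$ and using the strong Markov property yields
\[
\phi(x)=E_x[S^p]\ge 2^{-p}E_x[(S^p)\circ\theta_\tau]=2^{-p}E_x\big[\phi(X_\tau);\,\tau<\infty\big],
\]
where the last equality uses that $S$ vanishes on $\{\tau=\infty\}$. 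On $\{\tau<\infty\}$ one has $X_\tau\in N$, so $\phi(X_\tau)=\infty$; hence $P_x(\sigma_N<\infty)>0$ would force $\phi(x)=\infty$. Thus $\phi(x)<\infty$ implies $P_x(\sigma_N<\infty)=0$, and since $\phi<\infty$ $m_1$-a.e. we conclude $P_x(\sigma_N<\infty)=0$ for $m_1$-a.e. $x$. By (\ref{eq2.13}) the set $N$ is exceptional, i.e. $\phi<\infty$ q.e., which is the assertion.

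For (ii) I would first reduce to $\eta\in L^1(E_{0,T};m_1)$: replacing $\eta$ by $\eta\wedge\rho$ for a strictly positive $\rho\in L^1$ (which exists by $\sigma$-finiteness of $m_1$) only decreases the left-hand side, so the hypothesis is preserved. Writing $S^{n,m}=\sup_t|A^n_t-A^m_t|$ and $\Phi_{n,m}(x)=E_x[(S^{n,m})^p]$, I extract an increasing subsequence $(n_k)$ with $\int\Phi_{n_k,n_{k+1}}\eta\,dm_1\le 2^{-pk}$. By H\"older (with $C=\int\eta\,dm_1<\infty$), $\int\Phi_{n_k,n_{k+1}}^{1/p}\eta\,dm_1\le C^{1-1/p}2^{-k}$, so $\Psi:=\sum_k\Phi_{n_k,n_{k+1}}^{1/p}<\infty$ $m_1$-a.e.; Minkowski's inequality in $L^p(P_x)$ then gives $E_x[T_1^p]\le\Psi(x)^p<\infty$ $m_1$-a.e., where $T_k=\sum_{j\ge k}S^{n_j,n_{j+1}}$. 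Each difference $A^{n_j}-A^{n_{j+1}}$ is again an AF, so $T_1$ satisfies $T_1\circ\theta_s\le 2T_1$, and the hitting-time argument of part (i) applied to $x\mapsto E_x[T_1^p]$ upgrades $E_x[T_1^p]<\infty$ from $m_1$-a.e. to q.e. Finally, by the triangle inequality $S^{n_k,n_l}\le T_k$ for $l>k$, while on $\{T_1<\infty\}$ one has $T_k\downarrow0$ (tail of a convergent series) with $T_k^p\le T_1^p$; dominated convergence then gives $E_x[(S^{n_k,n_l})^p]\le E_x[T_k^p]\to0$ for q.e. $x$. For $0<p\le1$ the reduction is even simpler: subadditivity $(\sum a_j)^p\le\sum a_j^p$ replaces Minkowski and dispenses with the H\"older step.

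The main obstacle is the measure-theoretic and fine-topological justification of the strong Markov step: one must know that $\phi$ (resp. $E_\cdot[T_1^p]$) is nearly Borel, so that $\sigma_N$ is a genuine stopping time, and that $X_{\sigma_N}\in N$ on $\{\sigma_N<\infty\}$, which requires $N$ to be (nearly Borel and) finely closed. These are standard facts of Hunt-process theory: $\phi$ is nearly Borel as the increasing limit $\sup_k E_\cdot[S^p\wedge k]$ of excessive-type functions, and the hitting properties of its level sets follow from the general theory in \cite{BG}. Everything else is the routine bookkeeping of additive functionals, the Minkowski and H\"older inequalities, and dominated convergence.
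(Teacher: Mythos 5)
Your proof is correct in substance and rests on the same probabilistic mechanism as the paper's: both arguments identify a ``bad'' set $N$, show that $P_x(\sigma_N<\infty)=0$ for $m_1$-a.e.\ $x$ by combining the strong Markov property at $\sigma_N$ with the shift bound $\sup_t|A_t\circ\theta_s|\le 2\sup_t|A_t|$ coming from additivity, and then invoke (\ref{eq2.13}). The difference is in execution: the paper takes $N=\{x:P_x(B)>0\}$ for an event $B$ built from the tails $\sup_{t\ge0}|A^n_t-A^{n+1}_t|$ and kills $B$ under $P_{\eta\cdot m_1}$ by Chebyshev and Borel--Cantelli, whereas you take $N=\{\phi=\infty\}$ with $\phi=E_\cdot[S^p]$ (resp.\ $E_\cdot[T_1^p]$) and run the same hitting argument at the level of expectations. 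Your version has a concrete advantage: the summable majorant $T_k$ together with dominated convergence delivers the moment convergence $E_x\sup_{t\ge0}|A^{n_k}_t-A^{n_l}_t|^p\rightarrow0$ exactly as stated in the lemma, while the paper's event formulation directly yields only a $P_x$-a.s.\ uniform Cauchy statement and leaves the passage back to expectations implicit.

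The one point you flag but do not fully close --- that $X_{\sigma_N}\in N$ on $\{\sigma_N<\infty\}$ --- is a genuine issue for a general nearly Borel $N$: the process may enter $N$ through a regular point of $N$ that does not belong to $N$, where $\phi$ need not be infinite, and since $\phi$ is only $2^p$-supermedian rather than excessive, fine continuity of $\phi$ (hence fine closedness of $N$) is not automatic. The paper's remedy is simple and you should adopt it: since $\mbox{Cap}_\psi$ is a Choquet capacity, it suffices to prove $\mbox{Cap}_\psi(K)=0$ for every compact $K\subset N$, and for closed $K$ right-continuity of the paths gives $X_{\sigma_K}\in K$ on $\{\sigma_K<\infty\}$, after which your inequality $\phi(x)\ge 2^{-p}E_x\big[\phi(X_{\sigma_K});\sigma_K<\infty\big]$ applies verbatim. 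With that patch the argument is complete.
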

\begin{proof}
We will prove (ii). The proof of (i) is analogous. We may assume that
\begin{equation}
\label{eq8.16.1}
 E_{\eta\cdot m_1}\sup_{t\ge 0}|A^n_t-A^{n+1}_t|^p\le 2^{-n},\quad n\ge 1,
\end{equation}
and $\int_{E_{0,T}}\eta\,dm_1=1$. Let  $\tilde m_1=\eta\cdot m_1$ and
\[
B=\liminf_{n\rightarrow\infty}\{\sup_{t\ge 0}|A^n_t-A^{n+1}_t|^p>2^{-2n}\},
\qquad N=\{x\in E_{0,T}:P_x(B)>0\}.
\]
It is clear that $N$ is a nearly Borel set. We will show that $\mbox{Cap}_\psi(N)=0$.
Since Cap$_\psi$ is a Choquet capacity, we may and will assume that $N$ is compact.
We have
\[
P_{\tilde m_1}(\sigma_N<\infty)\le P_{\tilde m_1}(X_{\sigma_N}\in N)
=P_{\tilde m_1}(E_{X_{\sigma_N}}\fch_B>0).
\]
By the strong Markov property,
\[
P_{\tilde m_1}(E_{X_{\sigma_N}}\fch_B>0)
=P_{\tilde m_1}(E_x(\fch_{B}\circ\theta_{\sigma_N}|\FF_{\sigma_N})>0).
\]
Since $A$ is additive,
\begin{align*}
B\circ\theta_{\sigma_N}&=\liminf_{n\rightarrow\infty}\{\sup_{t\ge 0} |A^n_t\circ\theta_{\sigma_N}-A^{n+1}_t\circ\theta_{\sigma_{N}}|^p>2^{-n}\}\\
&\subset\liminf_{n\rightarrow\infty}\{\sup_{t\ge 0}|A^n_t-A^{n+1}_t|^p>2^{-n+p}\}
=:B'.
\end{align*}
Hence
\[
P_{\tilde m_1}(\sigma_N<\infty)\le \int_{E_{0,T}}P_x(E_x(\fch_{B'}|\FF_{\sigma_N})>0)
\tilde m_1(dx).
\]
But by (\ref{eq8.16.1}) and the Borel-Cantelli lemma, $P_{\tilde m_1}(B')=0$,
which implies that $P_x(B')=0$ for $m_1$-a.e. $x\in E_{0,T}$. Hence,  for $m_1$-a.e. $x\in E_{0,T}$, $P_x(E_x(\fch_{B'}|\FF_{\sigma_N})>0)=0$, so
$P_{\tilde m_1}(\sigma_N<\infty)=0$. Eqivalently, Cap$_\psi(N)=0$, as claimed.
\end{proof}

\begin{lemma}
\label{lm8.c1}
The set
$C=\{\sum_{i=1}^n\xi_iv_i:\xi_i\in C_c^\infty((0,T)),\, v_i\in V\cap C_c(E) \}$
is  dense  in $\WW^2_0$.
\end{lemma}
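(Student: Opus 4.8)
The plan is to pass from an arbitrary $u\in\WW^2_0$ to an element of $C$ by three successive approximations, each controlled in the norm $\|w\|^2_{\WW^2}=\|w\|^2_{\VV}+\|\partial_tw\|^2_{\HH}$, where $\HH=L^2(E_{0,T};m_1)=L^2(0,T;H)$. Since every element of $C$ vanishes in a neighbourhood of $t=0$ and of $t=T$, and since $\WW^2\hookrightarrow\WW\hookrightarrow C([0,T];H)$, the first task is to reduce to $u$ compactly supported in $t\in(0,T)$; the condition $u(0)=0$ recorded in $\WW_0$ is precisely what permits this at the left endpoint, the right endpoint being handled in the same way, so that the approximants we build vanish near both ends of $(0,T)$, as they must. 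Concretely, I would take cut-offs $\chi_\delta\in C^\infty([0,T])$ with $\chi_\delta\equiv1$ on $[\delta,T-\delta]$, $\chi_\delta\equiv0$ near $\{0,T\}$ and $|\chi'_\delta(t)|\le C/\mathrm{dist}(t,\{0,T\})$. Then $\chi_\delta u\to u$ in $\VV$ and $\chi_\delta\partial_tu\to\partial_tu$ in $\HH$ by dominated convergence, while the only delicate term $\chi'_\delta u$ tends to $0$ in $\HH$ by the one-dimensional Hardy inequality $\int_0^T\|u(t)\|_H^2\,t^{-2}\,dt\le 4\|\partial_tu\|_\HH^2$ applied to the $H$-valued map $t\mapsto u(t)$ with $u(0)=0$ (and its mirror at $t=T$).

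Once $u$ is supported in a compact subinterval $[a,b]\subset(0,T)$, I would regularise in time by a standard mollifier, setting $u_\varepsilon=j_\varepsilon\ast_t u$; for $\varepsilon$ small $u_\varepsilon\in C^\infty_c((0,T);V)$, and $u_\varepsilon\to u$ in $\VV$, $\partial_tu_\varepsilon=j_\varepsilon\ast_t\partial_tu\to\partial_tu$ in $\HH$, since translation is continuous in $L^2(0,T;V)$ and in $L^2(0,T;H)$. Thus it suffices to approximate a function $u\in C^\infty_c((0,T);V)$ by elements of $C$.

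The heart of the proof is the separation of variables. Fix $\theta\in C^\infty_c((0,T))$ with $\theta\equiv1$ on $\mathrm{supp}(u)$, extend $u$ by zero and view it as a smooth $P$-periodic $V$-valued function for some $P>T$. Writing its real Fourier series $u(t)=a_0+\sum_{n\ge1}\big(a_n\cos(2\pi nt/P)+b_n\sin(2\pi nt/P)\big)$ with coefficients $a_n,b_n\in V$, smoothness of $u$ forces $\|a_n\|_V+\|b_n\|_V$ to decay faster than any power of $n$ (integrate by parts in the $V$-valued Bochner integrals defining the coefficients). Hence the partial sums $S_N$ converge to $u$ in $C^1([0,T];V)$, and therefore $\theta S_N\to\theta u=u$ in $C^1([0,T];V)$, which embeds continuously into $\WW^2$ because $\|\cdot\|_H\le C\|\cdot\|_V$. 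Each $\theta S_N$ is a finite sum $\sum_k\xi_k(t)w_k$ with $\xi_k\in C^\infty_c((0,T))$ (the functions $\theta,\theta\cos(2\pi nt/P),\theta\sin(2\pi nt/P)$) and $w_k\in V$. Finally, regularity of the forms $(B^{(t)},V)$ gives that $V\cap C_c(E)$ is dense in $V$ for $\|\cdot\|_V$; replacing each $w_k$ by $v_k\in V\cap C_c(E)$ and using
\[
\|\xi_k(w_k-v_k)\|_{\WW^2}\le\big(\|\xi_k\|_{L^2(0,T)}+\|\xi'_k\|_{L^2(0,T)}\big)\,\|w_k-v_k\|_V
\]
produces an element $\sum_k\xi_kv_k\in C$ arbitrarily close to $u$ in $\WW^2$.

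The main obstacle is precisely this separation step: one must approximate a $V$-valued function simultaneously in $\VV$ and in the $\HH$-norm of its time derivative, and a naive partition-of-unity (Riemann-sum) approximation controls $u$ in $\VV$ but leaves $\partial_tu$ only bounded, not convergent, in $\HH$. Expanding in a Fourier series in $t$ circumvents this, since smoothness in time yields rapidly decaying $V$-valued coefficients and hence genuine $C^1$-in-time, thus $\WW^2$, convergence. The remaining ingredients — the Hardy-type time cut-off, the time mollification, and the spatial density $V\cap C_c(E)\subset V$ coming from regularity of the forms — are routine.
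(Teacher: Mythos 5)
Your architecture (endpoint cut-off, time mollification, separation of variables by a Fourier series in $t$, then spatial approximation using regularity of the forms) is sound, and it is a genuinely different route from the paper, which proves the lemma by simply invoking \cite[Lemma 1.1]{O3} with no argument at all; a self-contained proof of this kind is worth having. The left-endpoint cut-off via the $H$-valued Hardy inequality, the mollification step, the rapid decay of the $V$-valued Fourier coefficients giving $C^1([0,T];V)$-convergence of $\theta S_N$, and the final replacement of $w_k$ by $v_k\in V\cap C_c(E)$ are all correct.

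The gap is at the right endpoint. You dismiss $t=T$ with ``handled in the same way'' and ``its mirror at $t=T$'', but the mirror Hardy inequality $\int_0^T\|u(t)\|_H^2(T-t)^{-2}\,dt\le 4\|\partial_tu\|^2_{\HH}$ requires $u(T)=0$, whereas membership in $\WW_0$ only gives $u(0)=0$. For $u(t)=t\,v$ with $0\ne v\in V$ one has $u\in\WW^2\cap\WW_0$ and $u(T)=Tv\neq0$; then $\int^T\|u(t)\|_H^2(T-t)^{-2}\,dt=\infty$, so $\chi'_\delta u\not\to0$ in $\HH$ and your first reduction already fails. In fact no repair is possible for the statement read literally: since $\|\cdot\|_{\WW^2}$ dominates $\|\cdot\|_{\WW}$ and hence the $C([0,T];H)$-norm, the $\WW^2$-closure of $C$ is contained in $\{u:u(T)=0\}$, a proper subspace of $\WW^2\cap\WW_0$. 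What the paper actually needs (in the proof of Theorem \ref{tw.sob3}) is density of the larger class whose time factors are only required to vanish at $0$; your proof becomes correct for that class if you cut off only at the left endpoint and, before mollifying, extend $u$ to $t>T$ (e.g.\ by reflection $t\mapsto u(2T-t)$, which preserves membership in $\VV$ and keeps $\partial_tu\in\HH$), so that the approximants need not vanish near $T$. As written, though, the right-endpoint step is unsupported.
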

\begin{proof}
Follows from \cite[Lemma 1.1]{O3}.
\end{proof}

\begin{theorem}
\label{tw.sob3}
Let $\mu\in\MM_b(E_{0,T})$. If there exist $f\in L^1(E_{0,T};m_1)$, $g_1\in\VV'$,  $g_2\in\VV$ such that \mbox{\rm(\ref{eq5.12})} is satisfied for
all $\eta\in \WW_0\cap C_b(E_{0,T})$, then $\mu\in\MM_{0,b}(E_{0,T})$.
\end{theorem}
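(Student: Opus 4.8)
The plan is to prove that $\mu$ charges no set of zero $\mbox{Cap}_\psi$; since $\mu$ is bounded and the capacity is tight (an $\EE$-nest of compact sets exists), this is all that smoothness of a bounded measure requires, the finiteness condition $|\mu|(F_n)<\infty$ being automatic. First I would peel off the absolutely continuous part: $f\cdot m_1\ll m_1$ charges no polar set and is bounded, hence smooth, so it suffices to treat $\nu:=\mu-f\cdot m_1$. This $\nu$ is again a bounded Borel measure, and by hypothesis it is represented by $g:=g_1+\partial_tg_2\in\WW'_0$ in the sense that $\int_{E_{0,T}}\tilde\eta\,d\nu=\langle g_1,\eta\rangle-\langle\partial_t\eta,g_2\rangle=\langle\langle g,\eta\rangle\rangle$ for bounded $\eta\in\WW_0$.

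The strategy is then to realize $\nu$ as the Revuz measure of an additive functional of $\BX$, read off from the path behaviour of the solution. Applying Theorem \ref{tw.sob14} with $f=0$, $\varphi=0$ and this $g$, I obtain $u\in\VV$ solving $\langle\partial_t\eta,u\rangle+\BB(u,\eta)=\int_{E_{0,T}}\tilde\eta\,d\nu$ for $\eta\in\WW_0$; from its construction $v:=u+g_2\in\WW_T$. Since $v\in\WW_T$, Fukushima's decomposition (\ref{eq2.6}) gives $\tilde v(X_t)-\tilde v(X_0)=M^{[v]}_t+N^{[v]}_t$, and Proposition \ref{prop7.1} bounds the energy of $M^{[v]}$ by $\BB(v,v)$, which together with Lemma \ref{lm8.c2} lets me control the relevant approximations q.e. rather than merely $m_1$-a.e.

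The heart of the argument is to verify that $\tilde u=\tilde v-\tilde g_2$ is a quasimartingale function, i.e. that $t\mapsto\tilde u(X_t)$ is a $P_x$-quasimartingale for q.e.\ $x$, and then to invoke the characterization of Beznea and C\^impean \cite{BC}. Here $\tilde v(X)$ is a Dirichlet process by the above, $\tilde g_2(X)$ has the quasi-c\`adl\`ag structure, and the boundedness $\|\nu\|_{TV}<\infty$ is precisely what bounds the conditional variation over partitions, yielding the quasimartingale property. The result of \cite{BC} then produces the decomposition $\tilde u(X_t)=\tilde u(X_0)+M_t-A_t$ with $A$ a predictable additive functional of (locally) integrable variation. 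To identify its Revuz measure I would test the weak equation against the dense class of smooth functions from Lemma \ref{lm8.c1} and match it with the Revuz formula for $A$, using the duality and the identities of Section \ref{sec4} (in particular Lemma \ref{lem4.2}); Lemma \ref{lm8.c2} upgrades the $\eta\cdot m_1$-averaged identities and convergences to q.e.\ statements. This shows the Revuz measure of $A$ equals $\nu$. Since the Revuz measure of any additive functional of $\BX$ charges no set of zero $\mbox{Cap}_\psi$, $\nu$ is smooth, and hence $\mu=f\cdot m_1+\nu\in\MM_{0,b}(E_{0,T})$.

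I expect the main obstacle to be exactly the quasimartingale step. Because $u\notin\WW$ in general (only $u+g_2\in\WW_T$), Fukushima's theorem does not apply to $u$ directly, so the contribution of $\tilde g_2(X)$—and in particular its jumps—must be tracked separately and reconciled with the finite-variation part $A$. It is the finiteness of $\|\nu\|_{TV}$, channelled through Proposition \ref{prop7.1} and Lemma \ref{lm8.c2}, that simultaneously secures the quasimartingale property needed to apply \cite{BC} and the matching of the bounded-variation part of $\tilde u(X)$ with $\nu$.
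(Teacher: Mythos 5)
Your proposal follows essentially the same route as the paper: reduce to $\nu=\mu-f\cdot m_1$, solve the linear problem with datum in $\WW'_0$ via Theorem \ref{tw.sob14}, use the total-variation bound $|\EE(u,\eta)|\le\|\mu\|_{TV}\|\eta\|_\infty$ (extended from $\WW_0\cap C_b(E_{0,T})$ to all bounded $\eta\in\WW_0$) together with the Beznea--C\^impean quasimartingale characterization to produce a natural additive functional of integrable variation whose Revuz measure is $\nu$, and conclude smoothness. The paper extracts the additive functional through the resolvent approximations $u_n=nR_n\tilde u$ and their Fukushima decompositions, with Proposition \ref{prop7.1} and Lemma \ref{lm8.c2} used exactly as you anticipate, rather than through Fukushima's decomposition of $u+g_2$ and separate tracking of $\tilde g_2(X)$; this is a difference of technical execution, not of method.
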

\begin{proof}
Since the measure $f\cdot m_1$ is smooth, without loss of generality we may assume that $f=0$. Let $\Phi$ be a
functional on $\WW_0$ defined by the right-hand side of
(\ref{eq5.12}) (with $f=0$). It is clear that $\Phi\in\WW'_0$. Let
$u\in\VV$ be a solution to (\ref{eq7.1}) with $\varphi=0$, $f=0$ and $g=\Phi$.

{\em Step 1.}  We will show that $u$ is a difference of excessive functions.
By (\ref{eq5.12}) and the definition of solution to (\ref{eq7.1}), for every $\eta\in \WW_0\cap C_b(E_{0,T})$,
\begin{equation}
\label{eq8.c4}
|\EE(u,\eta)|\le \|\mu\|_{TV}\|\eta\|_\infty.
\end{equation}
Let $\mathcal C=\{\sum_{i=1}^n\xi_iv:\xi_i\in H^1(0,T),\, \xi_i(0)=0,\, v\in V\cap C_c(E)\}$.
Then $\mathcal C\subset\WW_0\cap C_b(E_{0,T})$, so (\ref{eq8.c4}) holds for every $\eta\in \mathcal C$. By Lemma \ref{lm8.c1}, $\mathcal C$
is dense in $\WW^2_0$. Let $\eta\in \WW^2_{0}$ be bounded. Write $c=\|\eta\|_\infty$ and choose $\{\eta_n\}\subset\mathcal C$ such that
and $\eta_n\rightarrow \eta$ in $\WW^2_0$. Then  $T_c(\eta_n)\rightarrow T_c(\eta)=\eta$
in $\WW^2_0$, so  (\ref{eq8.c4}) holds for every bounded $\eta\in \WW^2_{0}$. Suppose now that $\eta\in \WW_{0}$ and $\eta$ is bounded. Then $\alpha\hat R_\alpha\eta\in \WW^2_{0}$,   $\|\alpha\hat R_\alpha\eta\|_\infty\le \|\eta\|_\infty$ and $\alpha \hat R_\alpha\eta\rightarrow \eta$ in $\WW_0$ as $\alpha\rightarrow\infty$.
Therefore  (\ref{eq8.c4}) holds for every bounded $\eta\in\WW_{0}$.
By \cite[Proposition 4.5]{BC}, there exist excessive functions $v$ and $w$  such that $u=v-w$ $m_1$-a.e. and for every $\eta\in\HH\cap L^\infty(E_{0,T};m_1)$,
\begin{equation}
\label{eq8.c9}
\int_{E_{0,T}}(v+w)|\eta|\,dm_1<\infty,
\end{equation}
\begin{equation}
\label{eq8.c10}
\frac1t\int_{E_{0,T}}|P_tv-v|\eta\,dm_1 + \frac1t\int_{E_{0,T}}|P_tw-w|\eta\,dm_1\le c\|\eta\|_\infty.
\end{equation}
Since each  excessive function finite $m_1$-a.e. is finite q.e.,  we may assume that
$v(x)+w(x)<\infty$, $x\in E_{0,T}\setminus N$, where $N$ is some $m_1$-inessential set (see \cite[Proposition 6.12]{GS}).
%Since $v,w$ are excessive functions, for $\alpha\le\bar\alpha$ we have $\alpha R_\alpha v\le \bar\alpha R_{\bar\alpha} v$ and $\alpha R_\alpha w\le \bar\alpha R_{\bar\alpha} w,\, m_1$-a.e. Since $\alpha R_\alpha v, \alpha R_\alpha w$
%are quasi-continuous  the latter inequalities hold q.e. Thus there exists an exceptional set $N$ (we may assume that $N$ is an absorbing set) such that $\alpha R_\alpha v (x), \alpha R_\alpha w(x)$ are nondecreasing with respect to
%$\alpha$ for $x\in E_{0,T}\setminus N$. Let us put
%\[
%\tilde v(x)=\sup_{\alpha>0} \alpha R_\alpha v(x),\quad \tilde w(x)=\sup_{\alpha>0}\alpha R_\alpha w(x).
%\]
%Of course $\tilde v=v,\, \tilde w=w,\, m_1$-a.e. Moreover
% $\alpha R_\alpha \tilde v(x)\le \tilde v(x),\, \alpha R_\alpha\tilde w(x)\le\tilde w(x)$,
%$x\in E_{0,T}\setminus N$, $\alpha>0$ and $\alpha R_\alpha\tilde v(x)\nearrow \tilde v(x)$, $\alpha R_\alpha\tilde w(x)\nearrow \tilde w (x)$, $x\in E_{0,T}\setminus N$ as $\alpha\rightarrow \infty$. In other words $\tilde v,\, \tilde w$
%are excessive functions with respect to the process $\mathbb X^{E_{0,T}\setminus N}$. Thus by \cite[Theorem III.5.7]{BG} $\tilde v(X),\, \tilde w(X)$ are c\`adl\`ag supermartingales under measure $P_x$ for $x\in E_{0,T}\setminus N$.

{\em Step 2.} Let
\[
\tilde u= v-w,\qquad u_n=nR_n\tilde u,\qquad \nu_n=(n\tilde u-n^2R_n \tilde u)\cdot m_1.
\]
%Let us define $\mu_n:=n\hat R_n\circ\mu$. Observe that by (\ref{eq8.1}) for every $\eta\in C_b(E_{0,T})$
%\begin{align*}
%(nu-n^2R_n u,\eta)_\HH=\EE(u,n\hat R_n\eta)=\Phi(n\hat R_n\eta)=\int_{E_{0,T}}n\hat R_n\eta\,d\mu
%=\int_{E_{0,T}}\eta\,d\mu_n.
%\end{align*}
%So, $\mu_n=(nu-n^2R_n u)\cdot m_1$.
By \cite[Proposition I.3.7]{St2}, $u_n\rightarrow u$ strongly in $\VV$.
By  Fukushima's decomposition, there exists an  $m_1$-inessential set $N$ such that for every $x\in
E_{0,T}\setminus N$,
\begin{equation}
\label{eq8.5} u_n(X_t)=u_n(X_0)-\int_0^t\,dA^{\nu_n}_r
+\int_0^t\,dM^{[u_n]}_r,\quad t\le\zeta,\quad
P_x\mbox{-a.s.}
\end{equation}
We will show the uniform convergence of $\{M^{[u_n]}\}$ as
$n\rightarrow\infty$.
Let $\langle M^{[u_n]}\rangle$ denote the sharp bracket of $M^{[u_n]}$
(see, e.g, \cite[Section A.3]{FOT}).
We have
\begin{align}
\label{eq7.5}
e(M^{[u_n]})&=\frac12\lim_{\alpha\rightarrow\infty}\alpha^2
E_{m_1}\int^{\infty}_0e^{-\alpha t} E_{m_1}\langle M^{[u_n]}\rangle_t\,dt\nonumber \\ &=\frac12\lim_{\alpha\rightarrow\infty}
\int^{\infty}_0se^{-s}\frac{\alpha}{s}E_{m_1}\langle M^{[u_n]}\rangle_{s/\alpha}\,ds.
\end{align}
Since $t\mapsto E_{m_1}\langle M^{[u_n]}\rangle_t$ is subadditive, $(1/t)E_{m_1}\langle M^{[u_n]}\rangle_t$ increases as $t$ decreases, and $\lim_{t\downarrow0}(1/t)E_{m_1}\langle M^{[u_n]}\rangle_t=\sup_{t>0}(1/t)E_m\langle M^{[u_n]}\rangle_t$. Therefore, letting $\alpha\rightarrow\infty$ in (\ref{eq7.5}), shows that
\[
e(M^{[u_n]})=
\frac12 \sup_{t>0}\frac{1}{t}E_{m_1}\langle M^{[u_n]}\rangle_t
\cdot\int^{\infty}_0se^{-s}\,ds
=\frac12 \sup_{t>0}\frac{1}{t}E_{m_1}\langle M^{[u_n]}\rangle_t.
\]
Hence
\[
E_{m_1}\langle M^{[u_n]}\rangle_{\zeta}\le 2Te(M^{[u_n]}),
\]
so applying  Doob's inequality we get
\[
E_{m_1}\sup_{t\ge 0}|M^{[u_n]}_t-M^{[u_m]}_t|^2\le 8Te(M^{[u_n-u_m]})
\]
for all $n,m\ge1$. By  Proposition \ref{prop7.1} and  Lemma \ref{lm8.c2}, there exists an  $m_1$-inessential set $N$
such that, up to a subsequence,
%\begin{equation}
%\label{eq8.3}
\[
\lim_{n,m\rightarrow\infty}E_x\sup_{t\ge 0}
|M^{[u_n]}_t-M^{[u_m]}_t|^2=0,\quad x\in E_{0,T}\setminus N.
\]
%\end{equation}

{\em Step 3. } We will show that there exists a natural AF $A$ of $\mathbb X$ of finite variation and a martingale AF $M$ of $\mathbb X$ such that
\begin{equation}
\label{eq8.6} \tilde u(X_t)=\tilde u(X_0)-\int_0^t\,dA_r
+\int_0^t\,dM_r,\quad t\le\zeta,\quad P_x\mbox{-a.s.}
\end{equation}
for $x\in E_{0,T}\setminus N$. Set
\[
M_t=\liminf_{n\rightarrow \infty}M^{[u_n]}_t,\qquad A_t=\liminf_{n\rightarrow \infty}A^{\nu_n}_t,\quad  t\ge 0.
\]
By the definition of $\tilde u$,  for every $x\in E_{0,T}$,
%\begin{equation}
%\label{eq8.c8}
\[
u_n(X_t)\rightarrow \tilde u(X_t),\quad t\ge 0,\quad P_x\mbox{-a.s.}
\]
%\end{equation}
Therefore, letting $n\rightarrow\infty$ in (\ref{eq8.5}), shows that
(\ref{eq8.6}) is satisfied. Moreover,  $M$ is a square integrable  martingale
under the measure $P_x$ for $x\in E_{0,T}\setminus N$, and
\begin{equation}
\label{eq8.c12}
E_x\sup_{t\ge 0}|M^{[u_n]}_t-M_t|^2\rightarrow 0,\qquad A_t=\lim_{n\rightarrow \infty}A^{\nu_n}_t,\quad t\ge 0,\, P_x\mbox{-a.s.}
\end{equation}
for every $x\in E_{0,T}\setminus N$.
By  \cite[Theorem III.5.7]{BG}, $v(X)$, $w(X)$ are c\`adl\`ag supermartingales under the measure $P_x$ for $x\in E_{0,T}\setminus N$. In particular, $\tilde u(X)$ and $A$ are   c\`adl\`ag processes under $P_x$ for $x\in E_{0,T}\setminus N$.
By the resolvent identity,
\begin{align}
\label{eq8.c11}
\nonumber R|\nu_n|&=nR|u-nR_nu|\le nR|v-nR_nv|+nR|w-nR_nw| \\
&=nR(v-nR_nv)+nR(w-nR_nw)=nR_nv+nR_nw\le v+w.
\end{align}
Hence
\begin{equation}
\label{eq8.c13}
\sup_{n\ge 1}E_x|A^{\nu_n}|_\zeta\le v(x)+w(x)<\infty,\quad x\in E_{0,T}\setminus N.
\end{equation}
Let $t^k_i=iT/2^k,\, i=0,\dots,2^k$. Since $A$ is c\`adl\`ag,
\[
|A|_T=\lim_{k\rightarrow \infty}\sum_{i=1}^k|A_{t^k_{i}}-A_{t^k_{i-1}}|=\sup_{k\ge 1}\sum_{i=1}^k|A_{t^k_{i}}-A_{t^k_{i-1}}|.
\]
Hence
\begin{align*}
E_x|A|_T&=E_x \lim_{k\rightarrow \infty}\sum_{i=1}^k|A_{t^k_{i}}-A_{t^k_{i-1}}|= \lim_{k\rightarrow \infty} E_x\sum_{i=1}^k|A_{t^k_{i}}-A_{t^k_{i-1}}|
\\&\le\lim_{k\rightarrow \infty}\liminf_{n\rightarrow \infty} E_x\sum_{i=1}^k|A^{\nu_n}_{t^k_{i}}-A^{\nu_n}_{t^k_{i-1}}|
\le \liminf_{n\rightarrow \infty} E_x\sup_{k\ge 1} \sum_{i=1}^k|A^{\nu_n}_{t^k_{i}}-A^{\nu_n}_{t^k_{i-1}}|\\&
= \liminf_{n\rightarrow \infty} E_x\lim_{k\rightarrow \infty}\sum_{i=1}^k|A^{\nu_n}_{t^k_{i}}-A^{\nu_n}_{t^k_{i-1}}|
= \liminf_{n\rightarrow \infty} E_x|A^{\nu_n}|_T.
\end{align*}
%Let $\mathcal C$ be a countable subset of $C_c(E_{0,T})$ such that every $\eta\in\mathcal C$
%is  positive, $\int_{E_{0,T}}\eta\,dm_1<\infty$ and $\mathcal C$ is dense in $C_b(E_{0,T})$
%with respect to the uniform convergence on compacts.
%By \cite[Theorem 3.8]{J} sequence $\{A^{\nu_n}\}$ is $S$-tight with respect to the measure $P_x$. So, by \cite[Theorem 3.5, Theorem 3.10]{J} there exists a subsequence (still denoted by $(n)$)
%such that
%\[
%A^{\nu_n}\rightarrow V^{x},
%\]
%weakly in topology $S$ (with respect to the measure $P_x$) for some
% c\`adl\`ag increasing processes $V^{1,x}, V^{2,x}$ such that $E_xV^{1,x}_\zeta+E_xV^{2,x}_\zeta<\infty$. In particular
% \[
 %A^{\nu_n}\rightarrow V^{1,x}-V^{2,x},\quad  A^{|\nu_n|}\rightarrow V^{1,x}+V^{2,x}
 %\]
 %weakly in topology $S$ (under measure $P_x$).
%By   (\ref{eq8.3}), (\ref{eq8.c8}) and \cite[Theorem 3.5]{J} $A$ and $V^{1,x}-V^{2,x}$ have the same distribution under the measure $P_x$.
From this and (\ref{eq8.c13}) we get
\begin{equation}
\label{eq8.c5}
E_x|A|_\zeta<\infty,\quad x\in E_{0,T}\setminus N.
\end{equation}
Let
\[
\Lambda=\{\omega\in\Omega:A_t(\omega)=\limsup_{n\rightarrow \infty}A^n_t(\omega),\, t\ge 0\,\, A_\cdot(\omega)\,\,\mbox{ is c\`adl\`ag of finite variation}\}.
 \]
By what  has already been proved, $P_x(\Lambda)=1$ for  $x\in E_{0,T}\setminus N$.
Observe that $\theta_t(\Lambda)\subset \Lambda$.  Moreover, for all $s,t\ge 0$ and $\omega\in\Lambda$,
\begin{equation}
\label{eq8.c1}
A_t(\theta_s(\omega))=\lim_{n\rightarrow\infty}A^{\nu_n}_t(\theta_s(\omega))=
\lim_{n\rightarrow \infty}(A^{\nu_n}_{t+s}(\omega)-A^{\nu_n}_s(\omega))=A_{t+s}(\omega)-A_s(\omega).
\end{equation}
Of course, the same relation  holds for $A^+,A^-$. Thus $A^+,A^-$ are positive natural AFs of $\mathbb X$ with the defining set $\Lambda$ and exceptional set $N$. This implies that $M$ is a martingale AF of $\mathbb X$.

{\em Step 4.}
Let $\nu^+$ (resp. $\nu^-$) be the Revuz measure associated with $A^+$ (resp. $A^-$) (see \cite[Section 8]{GS}).
To complete the proof it suffices to show that $\nu\in\MM_{0,b}(E_{0,T})$ and $\nu=\mu$.
Since $M$ is a uniformly integrable martingale,
\[
\tilde u(x)=E_xA_\zeta,\quad x\in E_{0,T}\setminus N.
\]
We have $\tilde u=u_+-u_-$\,, where
\[
u_+(x):=E_xA^+_\zeta,\qquad u_-(x):=E_xA^-_\zeta,\quad x\in E_{0,T}\setminus N.
\]
It is clear that $u_+,\, u_-$ are natural potentials (see \cite[Definition IV.4.17]{BG} and the comments following the definition).
By the construction of functions $v,w$ (see \cite[Proposition 4.2]{BC}),
\[
v(x)\le u_+(x),\qquad w(x)\le   u_-(x),\quad x\in E_{0,T}\setminus N.
\]
Therefore $v,w$ are natural potentials, and by \cite[Theorem IV.4.22]{BG}, $v=E_\cdot A^1_\zeta$\,, $w=E_\cdot A^2_\zeta$
for some positive natural AFs $A^1, A^2$ of $\mathbb X$.  By the minimality argument,
\[
v(x)= u_+(x),\qquad w(x)=  u_-(x),\quad x\in E_{0,T}\setminus N.
\]
%Moreover this decomposition is minimal, i.e. if $u=u_1-u_2$ for some natural potentials
%$u_1, u_2$, then $u_+\le u_1,\, u_-\le u_2$.
From this, (\ref{eq8.c10}), (\ref{eq8.c11}) and \cite[Theorem 9.3]{GS} it follows that for every positive $\eta\in C_b(E_{0,T})$,
\[
\frac1tE_{\eta\cdot m}|A|_t\le \frac1t\int_{E_{0,T}}|P_tv-v|\eta\,dm_1 + \frac1t\int_{E_{0,T}}|P_tw-w|\eta\,dm_1\le c\|\eta\|_\infty.
\]
By \cite[Theorem 8.7]{GS}, for every positive $\eta\in C_b(E_{0,T})$,
\[
\lim_{t\downarrow0}\frac1tE_{\eta\cdot m}|A|_t= \int_{E_{0,T}}\eta\,d|\nu|.
\]
Therefore $\int_{E_{0,T}}\eta\,d|\nu|<\infty$ for any positive $\eta\in C_b(E_{0,T})$. Consequently, $\nu\in\MM_b(E_{0,T})$.
By the very definition of  $\nu^+$ and $\nu^-$, if Cap$_\psi(B)=0$, then $|\nu|(B)=0$, so $\nu\in\MM_{0,b}(E_{0,T})$.
By \cite[Theorem 9.3]{GS}, $\tilde u= R\nu$ q.e.
By the definition of a solution to (\ref{eq7.1}) and (\ref{eq5.12}),
\[
\EE(u,\eta)=\int_{E_{0,T}}\eta\,d\mu
\]
for $\eta\in\WW_{0}\cap C_b(E_{0,T})$. On the other hand,  for every $\eta\in\WW_{0}\cap C_b(E_{0,T})$,
\[
\EE(u,\eta)=\lim_{t\downarrow0}\frac{1}{t}(u,\eta-\hat T_t\eta)=\lim_{t\downarrow0} \frac1t(u-T_tu,\eta)=\lim_{t\downarrow0}\frac{1}{t}E_{\eta\cdot m_1} \int_0^t\,dA^\nu_r=\int_{E_{0,T}}\eta\,d\nu.
\]
Hence $\mu=\nu$.
\end{proof}

\section{Decomposition of measures and additive functionals}
\label{sec8}

In this section, we study the structure of the additive
functional $A^{\mu}$ in the Revuz correspondence with
$\mu\in\MM_{0,b}(E_{0,T})$. Specifically, we want to get deeper
understanding of the nature of jumps of $A^{\mu}$ and their
relation to the decomposition $(f,g_1,g_2)$  of $\mu$ given in
Theorem \ref{th3.2}. Before proceeding,  remarks concerning two
special cases of $\mu$ are in order.

If $\mu=f\cdot m_1$, then
\[
A^{\mu}_t=\int^t_0f(X_r)\,dr,\quad t\ge0.
\]
If $\mu=g_1$, then
\begin{equation}
\label{eq8.8}
A^{\mu}_t=-N^{[u]}_t,\quad t\ge0,
\end{equation}
where $u\in\WW_T$ is a unique solution to the Cauchy problem
\begin{equation}
\label{eq.sob10}
(\partial_t+L_t)u=-g_1,\qquad u(T)=0,
\end{equation}
and $N^{[u]}$ is the continuous AF from  Fukushima's decomposition
(\ref{eq2.6}).  Indeed,
%let
%$g^\alpha_1(\eta):= \langle g_1,\alpha\hat R_\alpha\eta\rangle$. Observe that $g^\alpha_1\in\HH\subset\VV'$.
%Let $u_\alpha$ be a solution to (\ref{eq.sob10}) with $g_1$ replaced by $g^\alpha_1$.
%It is clear by \cite[Proposition 3.7]{St2} that $\langle g^\alpha_1,\eta\rangle\rightarrow \langle g_1,\eta\rangle$
%as $\alpha\rightarrow \infty$ for every $\eta\in\VV$. Hence we get easily that (up to subsequence)$u_\alpha\rightarrow u$ weakly in $\WW_T$. Let $\eta\in \WW_0\cap C_b(E_{0,T})$, then by (\ref{eq2.2}), (\ref{eq5.12})
%\[
% (u_\alpha,\eta)=\EE(u_\alpha,\hat R\eta)=
%\langle g^\alpha_1,\hat R\eta\rangle=\int_{E_{0,T}}\alpha R_\alpha(\hat R\eta)\,d\mu=(\alpha R_\alpha\eta,R\mu).
%\]
%Passing to the limit with $\alpha\rightarrow \infty$ we get by continuity of $\eta$ and weak convergence of $u_\alpha$ that
by Proposition \ref{stw.sob15}, $u=R\mu$ $m_1$-a.e. In  other
words, for q.e. $x\in E_{0,T}$ we have
\begin{equation}
\label{eq8.10}
\tilde u(x)=E_{x}A^\mu_{\zeta}.
\end{equation}
Let $M^x_t=E_{x}(A^\mu_{\zeta}|\FF_t)-\tilde u(X_0)$, $t\ge0$. Because of standard perfection procedure (see, e.g., \cite[Lemma A.3.6]{FOT}), there is a martingale AF $M$ of $\BX$ such that $M_t=M^x_t$, $t\ge0$, $P_x$-a.s. for q.e. $x\in E_{0,T}$.  From (\ref{eq8.10}) and the strong Markov property we obtain
\begin{equation}
\label{eq.deu}
\tilde u(X_t)-\tilde u(X_0)=-A^\mu_t+M_t,\quad t\ge 0
\end{equation}
(see \cite[Remark 3.3]{KR:NoD} for more details).
Since $\tilde u$ is quasi-continuous and the filtration is quasi-left continuous,  from (\ref{eq.deu}) it follows  that  $A^\mu$ is continuous. It is clear
that $M$ is a martingale AF of $\BX$. Let $u_n=nR_nu$. Then
\begin{equation}
\label{eq.deu.cc}
 u_n(X_t)-u_n(X_0)=N^{[u_n]}_t+M^{[u_n]}_t,\quad t\ge 0,
\end{equation}
and by  It\^o's formula,
\begin{equation}
\label{eq.deu.cc1}
 u_n(X_t)- u_n(X_0)=-A^{\mu_n}_t+M^n_t,\quad t\ge 0,
\end{equation}
where $\mu_n=n(u_n-u)\cdot m_1$ and $M^n_t=E_{x}(A^{\mu_n}_{\zeta}|\FF_t)-\tilde u_n(X_0)$ $P_x$-a.s. for q.e. $x\in E_{0,T}$.
By an elementary calculation (see, e.g., \cite[page 245]{FOT}),
$A^{\mu_n}$ is of zero energy. By uniqueness of Fukushima's
decomposition, $-A^{\mu_n}=N^{[u_n]}$. We know that $u_n\rightarrow \tilde u$ q.e. and $u_n\rightarrow u$ in $\VV$, so by Proposition \ref{prop7.1}, $N^{[u_n]}\rightarrow N^{[u]}$. Also, by \cite[Theorem 3.3]{K:SPA},
$A^{\mu_n}_t\rightarrow A^\mu_t,\, t\ge 0$. Thus $-A^\mu=N^{[u]}$.

We see that in both special cases  considered above the  additive
functionals corresponding to $\mu$ are continuous. This suggests
that the jumps of $A^{\mu}$ stem from the component $g_2$ of the
decomposition of $\mu$. In what follows we will show that this is indeed
true and  we will make this statement more precise.

%To make this statement more precise, we
%need some results on solutions of the linear Cauchy problem with
%data in $\WW'_0\cap\MM_{0,b}(E_{0,T})$. ??

Following \cite{K:JFA} we adopt the following definition.

\begin{definition}
We say that a Borel measurable function $u$ on $E_{0,T}$  is
quasi-c\`adl\`ag if for q.e. $x\in E_{0,T}$ the process $t\mapsto
u(X_t)$ is c\`adl\`ag on $[0,T-\tau(0))$ under the measure $P_x$.
\end{definition}

Since $A^{\mu}$ is predictable, by \cite[Chapter IV, Theorem 88B]{DM},
%(albo Lipcer, Shiryayev, Teorei martyngalow, Tw.1.6).
there  is a sequence $\{\tau_n\}$ of predictable stopping times
exhausting the jumps of $A^{\mu}$, i.e.
\[
\{\Delta A^{\mu}\neq0\}=\bigcup_{n=1}^{\infty}[[\tau_n]], \quad
[[\tau_n]]\cap [[\tau_m]]=\emptyset,\quad n\neq m,
\]
where $[[\tau_n]]$ denotes the graph of $\tau_n$.

In what follows, we denote by $A^{\mu,c}$ the continuous part of $A^\mu$
and by $A^{\mu,d}$ the pure jump part of $A^\mu$. For a given c\`adl\`ag
process $Y$,  we write
$\Delta Y_t=Y_t-Y_{t-}\,$, where $Y_{t-}=\lim_{s\nearrow t} Y_s$.

\begin{theorem}
\label{th8.1}
Let $\mu\in\MM_{0,b}(E_{0,T})$ and $(f,g_1,g_2)$ be a
decomposition of $\mu$ from Theorem \ref{th3.2}. Then
\begin{enumerate}
\item[\rm(i)]
$g_2$ has a
quasi-c\`adl\`ag $m_1$-version $\tilde g_2$.
\item[\rm(ii)]
Let $Y=\tilde g_2(X)$. For q.e. $x\in E_{0,T}$ we have
\begin{equation}
\label{eq4.31} A^{\mu,d}_t=\sum_{\tau_n\le t}\Delta Y_{\tau_n} \quad P_x\mbox{-a.s.}
\end{equation}
\end{enumerate}
\end{theorem}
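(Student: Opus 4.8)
The plan is to reduce to the case $f=0$, realize $g_2$ as the difference between the potential $R\mu$ and a solution in $\WW_T$, and then read off the jumps of $A^\mu$ from the semimartingale $Y=\tilde g_2(X)$, the whole point being that the only non‑predictable ("genuine") jumps of $Y$ are carried by a martingale which must vanish at the predictable times $\tau_n$.

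First I would split $\mu=f\cdot m_1+\nu$ with $\nu:=\mu-f\cdot m_1$. Then $\nu\in\MM_{0,b}(E_{0,T})$ has decomposition $(0,g_1,g_2)$, and since $A^{f\cdot m_1}_t=\int_0^t f(X_r)\,dr$ is continuous, additivity of the Revuz correspondence gives $A^{\mu,d}=A^{\nu,d}$; hence it suffices to assume $f=0$. Put $u=R\mu$. By Proposition~\ref{stw.sob15} (with $f=0$, $\varphi=0$) we have $\tilde u(x)=E_xA^\mu_\zeta$ and, by the definition of a solution together with (\ref{eq2.23}), $\EE(u,\eta)=\int_{E_{0,T}}\tilde\eta\,d\mu$ for $\eta\in\WW_0$. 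Inserting the decomposition (\ref{eq5.12}) and using (\ref{eq2.23}) once more, one checks that $w:=u+g_2$ satisfies $\EE(w,\eta)=\langle g_1,\eta\rangle+\BB(g_2,\eta)=\langle\chi,\eta\rangle$ for $\eta\in\WW_0$, where $\chi:=g_1+\BB(g_2,\cdot)\in\VV'$; thus $w$ is the $\WW_T$‑solution of the Cauchy problem with right‑hand side $\chi\in\VV'$, so $w\in\WW_T$. This proves (i): $w$ has a quasi‑continuous $m_1$‑version $\tilde w$ (Proposition~IV.1.8 of \cite{St2}), while the representation $\tilde u(X_t)-\tilde u(X_0)=M^u_t-A^\mu_t$, with $M^u_t:=E_x(A^\mu_\zeta\mid\FF_t)-\tilde u(X_0)$ a martingale AF, shows that $t\mapsto\tilde u(X_t)$ is c\`adl\`ag, i.e. $\tilde u$ is quasi‑c\`adl\`ag; since $g_2=w-u$, the function $\tilde g_2:=\tilde w-\tilde u$ is an $m_1$‑version of $g_2$ and is quasi‑c\`adl\`ag.

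For (ii) I would apply Fukushima's decomposition (\ref{eq2.6}) to $w\in\WW_T$, namely $\tilde w(X_t)-\tilde w(X_0)=M^{[w]}_t+N^{[w]}_t$ with $N^{[w]}$ continuous, and subtract the representation of $\tilde u$. Writing $Y=\tilde g_2(X)=\tilde w(X)-\tilde u(X)$ this yields, $P_x$‑a.s. for q.e. $x$,
\[
Y_t-Y_0=A^\mu_t+N^{[w]}_t+L_t,\qquad L:=M^{[w]}-M^u,
\]
where $L$ is a martingale AF of $\BX$. Taking the jump at a predictable time $\tau_n$ and using continuity of $N^{[w]}$ gives $\Delta Y_{\tau_n}=\Delta A^\mu_{\tau_n}+\Delta L_{\tau_n}$, so everything reduces to proving $\Delta L_{\tau_n}=0$.

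The decisive step is the measurability of $\Delta Y_{\tau_n}$. Since $\BX$ is a Hunt process it is quasi‑left‑continuous, so $X_{\tau_n}=X_{\tau_n-}$ $P_x$‑a.s. on $\{\tau_n<\zeta\}$; as $\tilde g_2$ is a fixed Borel function, $Y_{\tau_n}=\tilde g_2(X_{\tau_n})=\tilde g_2(X_{\tau_n-})$ and $Y_{\tau_n-}$ are both $\FF_{\tau_n-}$‑measurable, hence so is $\Delta Y_{\tau_n}$. Because $A^\mu$ is predictable, $\Delta A^\mu_{\tau_n}$ is $\FF_{\tau_n-}$‑measurable too, whence $\Delta L_{\tau_n}=\Delta Y_{\tau_n}-\Delta A^\mu_{\tau_n}$ is $\FF_{\tau_n-}$‑measurable; since $L$ is a martingale and $\tau_n$ predictable, the predictable projection of $\Delta L$ vanishes, so $\Delta L_{\tau_n}=E[\Delta L_{\tau_n}\mid\FF_{\tau_n-}]=0$. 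Therefore $\Delta Y_{\tau_n}=\Delta A^\mu_{\tau_n}$, and summing over $\tau_n\le t$ gives $A^{\mu,d}_t=\sum_{\tau_n\le t}\Delta Y_{\tau_n}$. I expect this measurability step to be the main obstacle: even though $\BX$ has no jump at the predictable times $\tau_n$, the process $Y$ does jump there because $\tilde g_2$ has genuine discontinuities in the time direction (consistent with $\mu_t=(g_2(t-)-g_2(t))\cdot m$), and the crux is that these jumps are nonetheless $\FF_{\tau_n-}$‑measurable, forcing $L$ to be continuous at each $\tau_n$. The remaining technical points are checking that $L$ is a genuine (uniformly integrable, or localized) martingale so that the predictable‑projection argument applies, and that the displayed identity holds simultaneously for q.e. starting point.
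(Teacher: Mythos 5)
Your proof is correct and follows essentially the same route as the paper's: reduce to $f=0$, realize $g_2$ as the difference of the potential of $\nu=\mu-f\cdot m_1$ and an element of $\WW_T$ (this is exactly the paper's (\ref{eq6.5.vir1})), apply Fukushima's decomposition to the $\WW_T$-part and the martingale representation to the potential, and then observe that the martingale parts cannot jump at the predictable times $\tau_n$. The only cosmetic difference is that you establish the vanishing of $\Delta L_{\tau_n}$ by a predictable-projection argument exploiting the $\FF_{\tau_n-}$-measurability of $\Delta Y_{\tau_n}$, whereas the paper invokes the quasi-left continuity of the filtration via Theorems A.3.2 and A.3.6 of \cite{FOT} — the same underlying fact.
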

\begin{proof}
Set $\nu=\mu-f\cdot m_1$ and
\[
v(x)=-E_{x}\int^{\zeta}_0dA^{\nu}_t,\quad x\in E_{0,T},
\]
where $A^{\nu}$ is a natural AF of $\BX$ in the Revuz correspondence with
$\nu$. By Proposition \ref{stw.sob15}, $v$ is quasi-c\`adl\`ag and
it is a solution to (\ref{eq7.1}) with $f=0$, $\varphi=0$ and $g$  replaced
by $-\nu$. By (\ref{eq6.5.vir1}),  $g_2=v-w$ $m_1$-a.e. for some $w\in\WW_T$.
%\[
%-\partial_t w-L_tw=-g_1-\chi,\qquad w(T)=0,
%\]
%where $\chi$ is an element of $\VV'$ such that $\langle\chi,\eta\rangle=\BB(g_2,\eta)$, $\eta\in\VV$.
%Since
%$g_1+\chi\in\VV'$, we have $w\in\WW_T$.
Write
\begin{equation}
\label{eq4.19}
\tilde g_2=v-\tilde w.
\end{equation}
By the argument used to prove (\ref{eq.deu}), there is a
martingale AF $M^{\nu}$ of $\BX$ such that
\[
v(X_t)-v(X_0)=M^{\nu}_t+A^{\nu}_t,\quad t\ge 0,\quad
P_{x}\mbox{-a.s.}
\]
for q.e. $x\in E_{0,T}$.  By Fukushima's decomposition
(\ref{eq2.6}),
\[
\widetilde{w}(X_t)-\widetilde{w}(X_0) =M_t^{[w]}+N^{[w]}_t,\quad
t\ge 0,\quad P_{x}\mbox{-a.s.}
\]
for q.e. $x\in E_{0,T}$. Hence
\begin{equation}
\label{eq4.20} \tilde g_2(X_t)-\tilde g_2(X_0)
=M^{\nu}_t-M^{[w]}_t +A^{\nu}_t-N^{[w]}_t,\quad t\ge 0,\quad
P_{x}\mbox{-a.s.}
\end{equation}
for q.e. $x\in E_{0,T}$. Since the filtration $(\FF_t)$ is
quasi-left  continuous, by \cite[Theorems A.3.2, A.3.6]{FOT},  the
martingale AFs $M^{\nu}$ and $M^{[w]}$ admit no predictable jump. Therefore
from (\ref{eq4.20}) and continuity of the functionals $N^{[w]}$
and $A^{f\cdot m_1}$ we get the result.
\end{proof}

\begin{remark}
(i) By (\ref{eq4.19}), $\tilde g_2$ is quasi-continuous if and
only if $v$ is quasi-continuous, which in turn is  equivalent to
the continuity of $A^\mu$.
\smallskip\\
(ii) By \cite[Theorem 16.8]{GS},  there is  a positive
Borel function $h:E_{0,T}\rightarrow\BR$  such that
\[
\Delta A^{\mu,d}_t=h(X_{t-}),\quad t\ge 0,\quad P_x\mbox{-a.s.}
\]
for q.e. $x\in E_{0,T}$. By this and (\ref{eq4.31}),

\[
\tilde g_2(X_{\tau_n})-\tilde g_2(X)_{\tau_n-}
%=\Delta A^{\mu,d}_{\tau_n}
=h(X_{\tau_n-})=h(X_{\tau_n})\quad P_x\mbox{-a.s.},
\]
the second equality being a consequence of the
quasi-continuity of the filtration $(\FF_t)$.
We see that if $\tilde g_2$ is not quasi-continuous,
then the jumps of $\tilde g_2(X)$ which are
not produced by $X$ are produced by $\tilde g_2$. The
size of these jumps is described by $h$.
%Therefore $h$ may be called the ``purely quasi-discontinuous part"
%of $\tilde g_2$.
\end{remark}

\begin{example}
Assume that $B^{(t)}=B^{(0)}$ for $t\in[0,T]$ and there exists  a
strictly positive $\beta:E\rightarrow\BR$ such that $\beta\in
L^1(E;m)\cap C(E)$ (for instance, we may take $E$ to be a bounded
open subset $D$ of $\BR^d$ and consider the classical Dirichlet form
on $L^2(D;dx^0)$ defined as $B^{(t)}(u,v)=\int_D\nabla u(x^0)\nabla
v(x^0)\,dx^0$, $u,v\in H^1_0(D)$). Let $a\in(0,T)$. We  define $\mu$ on
$E_{0,T}$ by
\[
\mu(dt\,dx^0)=(\delta_{\{a\}}+\ell)(dt)\otimes \beta(x^0)\,m(dx^0),
\]
where $\ell$ is the Lebesgue measure on $[0,T]$. Clearly
$\mu\in\MM_b(E_{0,T})$. Moreover, if $\mbox{Cap}_\psi(B)=0$  for
some Borel set $B\subset E_{0,T}$, then $m_1(B)=0$ and
$\mbox{Cap}_\psi(\{t\}\times B_t)=0$ for every $t\in(0,T]$, where
$B_t=\{x^0\in E:(t,x^0)\in B\}$. From this, \cite[(4.4)]{O3} and
\cite[(6.2.24)]{O4} it follows that $m(B_t)$ for $t\in(0,T]$. In
particular, $m(B_a)=0$, and consequently $\mu(B)=0$. Thus
$\mu\in\MM_{0,b}(E_{0,T})$. Let
\[
\alpha(t)=\fch_{[a,T]}(t)+t,\quad g_2(t,x^0)=\alpha(t)\beta(x^0),
\quad (t,x^0)\in E_{0,T}.
\]
Then
\[
\mu=\partial_tg_2.
\]
%(tzn $\mu$ ma rozk/lad $(0,0,g_2)$). Indeed, we have
%\[
%\int_{E_{0,T}}\eta\,d\mu=\int_E\eta(\iota,x)\beta(x)\,m(dx).
%\]
%On the other hand,
%\[
%-\langle\partial_t\eta,g_2\rangle
%=\int^T_0\langle\partial_t\eta(t),\beta\rangle_{V',V}\alpha(t)\,dt = ..
%\]
By (\ref{eq2.8}),
$g_2(X_t)=\alpha(\upsilon(t))\beta(X^0_{\upsilon(t)})$,  $t\ge0$,
$P_x$-a.s.  for q.e. $x\in E_{0,T}$, from which it follows that
$g_2$ is quasi-c\`adl\`ag. Furthermore, under the measure $P_{x}$
with $x=(s,x^0)$, for every predictable $\tau$ we have
\[
\Delta g_2(X)_\tau=(\alpha(s+\tau)-\alpha ((s+\tau)-))\beta(X^0_{s+\tau})
=\begin{cases}
0, &s+\tau\neq a,\\
\beta(X^0_{a}), & s+\tau=a.
\end{cases}
\]
Consequently, $g_2$ is not quasi-continuous. Finally, we note that
\[
\Delta A^{\mu,d}_t=\fch_{\{a\}}(s+t)\beta(X^0_{s+t})=h(X_t), \quad
t\in[0,T-s], \quad P_{s,x^0}\mbox{-a.s.},
\]
where $h(t,x^0)=\fch_{\{a\}}(t)\beta(x^0)$, $(t,x^0)\in E_{0,T}$.
%i.e. $h$ is the purely quasi-discontinuous part of $g_2$.
\end{example}

\subsection*{Acknowledgements}
{\small This work was supported by Polish National Science Centre
(Grant No. 2016/23/B/ST1/01543).}

\end{document}